\documentclass[11pt,a4paper,reqno]{amsart}
\usepackage[T1]{fontenc}
\usepackage[utf8]{inputenc}
\usepackage{lmodern}
\usepackage[a4paper,margin=28mm]{geometry}
\usepackage{amsmath,amssymb,amsthm,mathtools,mathrsfs}
\usepackage{microtype,booktabs,longtable,listings}
\usepackage[unicode,hidelinks]{hyperref}
\usepackage{bookmark}
\newtheorem{theorem}{Theorem}[section]
\newtheorem{lemma}[theorem]{Lemma}
\newtheorem{proposition}[theorem]{Proposition}
\newtheorem{corollary}[theorem]{Corollary}
\theoremstyle{remark}

\numberwithin{equation}{section}
\allowdisplaybreaks[2]
\newcommand{\qb}[2]{\begin{bmatrix}#1\\#2\end{bmatrix}_q}
\newcommand{\N}{\mathfrak N}
\newcommand{\E}{\mathcal E_9}
\newcommand{\Ec}{\mathcal E_3}
\newcommand{\br}[1]{\langle #1\rangle}
\DeclareMathOperator{\ord}{ord}

\DeclareMathOperator{\diag}{diag}
\usepackage{xcolor}

\newcommand{\Nzero}{\mathbb N_0}

\newcommand{\Jbar}{\overline{J}}

\newcommand{\Ai}{\operatorname{Ai}}
\newcommand{\thp}{\vartheta^{+}}

\title[Eighteen Kanade--Russell related identities]{Proofs of eighteen conjectural identities related to the modulo nine Kanade--Russell identities}
\author{Ernest X. W. Xia}
\address{School of Mathematical Sciences, Suzhou University of Science and
Technology, Suzhou 215009, Jiangsu, P. R. China}
\email{ernestxwxia@163.com}
\subjclass[2020]{Primary 11P84; Secondary 33D15, 11F03, 05A17}
\keywords{Kanade--Russell identities, reflections, Schur polynomials,
Nahm sums, divided differences, $q$-Airy functions}
\date{September 19, 2026}
\hypersetup{pdftitle={Proofs of eighteen conjectural identities related to the
		modulo nine Kanade--Russell identities},
pdfauthor={Ernest X. W. Xia}}

\begin{document}

\begin{abstract}
	Uncu and Zudilin studied reflections of finite forms of the
	modulo nine Kanade--Russell identities under $q\mapsto q^{-1}$,
	leading to ten conjectural sum--product evaluations, including
	two proposed by Warnaar.
	Hickerson conjectured four further identities in his study of
	contiguous relations for the associated parameterized double
	sums. By constructing finite forms of Hickerson's sums and
	studying their reflections, Konenkov proposed two additional
	product identities, whose conjugates give two more.
	In this paper, we prove all eighteen conjectural identities by combining
	Nahm-sum dual evaluations established by the author with parameterized
	divided-difference identities. For the symmetric reflections, Schur-type
	decompositions and finite rational identities reduce the required
	evaluations to nonsingular two-by-two linear systems. The asymmetric and
	cyclotomic cases are treated through $q$-Airy--theta connection formulas,
	whereas the Hickerson identities are derived from low-order divided
	differences.
\end{abstract}

\maketitle

\section{Introduction}\label{sec:intro}

\allowdisplaybreaks

Two of the most important results in the theory of $q$-series are the classical 
Rogers--Ramanujan identities which state that
\begin{align}
 G(q):=\sum_{n\ge0}\frac{q^{n^2}}{(q;q)_n}
 &=\frac1{(q,q^4;q^5)_\infty},\label{eq:intro-RR1}\\
H(q):= \sum_{n\ge0}\frac{q^{n^2+n}}{(q;q)_n}
 &=\frac1{(q^2,q^3;q^5)_\infty}.\label{eq:intro-RR2}
\end{align}
Whenever analytic convergence is invoked, we assume that
$q\in\mathbb{C}$ with $|q|<1$. In formal power-series arguments,
$q$ is regarded as an indeterminate. We use the standard
$q$-series notation:
\[
(a;q)_0:=1,\qquad
(a;q)_n:=\prod_{k=0}^{n-1}(1-aq^k),\qquad
(a;q)_\infty:=\prod_{k=0}^{\infty}(1-aq^k),
\]
and
\begin{equation*}
	(a_1,\ldots,a_m;q)_n:=\prod_{j=1}^m(a_j;q)_n,
	\qquad n\in\Nzero\cup\{\infty\},
\end{equation*}
 where  $\mathbb C$ is the set 
 of complex numbers, $\mathbb C^*:=\mathbb C\setminus\{0\}$
 is  the set of nonzero complex numbers and   $\Nzero$ is the set of nonnegative integers.
In addition,
the product notation used below is
\begin{align*}
	J_m&:=(q^m;q^m)_\infty,\quad
	J_{a,m}:=(q^a,q^{m-a},q^m;q^m)_\infty,
	\quad \Jbar_{a,m}:=(-q^a,-q^{m-a},q^m;q^m)_\infty.
\end{align*}
Here $m$ is a positive integer and $a $ is an integer.

The two identities were first proved by Rogers~\cite{Rogers} in 1894 and were
independently rediscovered by Ramanujan \cite{RamanujanCollected}  sometime 
 before 1913.
Ramanujan communicated them, without proof, to Hardy in a letter from
India. 

 These identities connect apparently different mathematical objects.
After multiplication by suitable powers of $q$, their products belong
to the theory of modular functions, allowing transformations and cusp
behavior to be studied by complex analysis. In representation theory,
Lepowsky and Wilson~\cite{LW} explained them through affine Lie algebra
modules and vertex operators: character formulas and suitable bases
provide two descriptions of the same graded dimensions. In statistical
mechanics, Baxter's hard-hexagon model~\cite{Baxter} gives a setting in
which related series and products occur in exact thermodynamic
calculations. These connections have led to
numerous generalizations, commonly referred to as
\emph{Rogers--Ramanujan type identities}. A classical milestone is
Slater's list of $130$ such identities~\cite{Slater}.
The passage from the classical identities to multisum identities is
illustrated by the analytic Andrews--Gordon identities~\cite{AndrewsAG,gordon}.
For $k\ge2$ and
$1\le i\le k$,  
\[
 \sum_{n_1,\ldots,n_{k-1}\ge0}
 \frac{q^{(n_1^+)^2+\cdots+(n_{k-1}^+)^2+n_i^++\cdots+n_{k-1}^+}}
 {(q;q)_{n_1}\cdots(q;q)_{n_{k-1}}}
 =\frac{(q^i,q^{2k+1-i},q^{2k+1};q^{2k+1})_\infty}{(q;q)_\infty},
\]
where   $n_j^+:=n_j+\cdots+n_{k-1}$. 
The linear sum in the exponent is empty when $i=k$. At $k=2$, this
recovers \eqref{eq:intro-RR1} and \eqref{eq:intro-RR2}.
More general Rogers--Ramanujan type multisums involve a quadratic form
in several indices and factorials with different bases. Subsequent
developments have considerably broadened the subject: in particular,
many Rogers--Ramanujan type identities now involve double, triple, or
more general multiple $q$-hypergeometric sums whose values are given
by infinite products, see, for example \cite{cao-rosengren-wang,milas-wang,Rosengren,ShiWang,sills,wang-wang-rank-three,wangwang,wang-rank-two,wang-rank-three,wang-duality,zagier}.

The  importance of the Rogers--Ramanujan identities  is visible already in their partition interpretation
given by Schur~\cite{schur}.
The first equates partitions with consecutive parts differing by at
least two to partitions into parts congruent to $1$ or $4$ modulo $5$.
For the second, the difference condition is supplemented by a smallest
part of at least two, and the permitted residues are $2$ and $3$.
Thus a local restriction on a partition can have the same generating
function as a restriction on the residue classes of its parts.
Motivated by such partition identities arising from Rogers--Ramanujan identities,   Kanade and Russell~\cite{KRfinder,KRstairs} initiated the study of the 
modulo nine identities  through computational searches
using their Maple package \texttt{IdentityFinder}. 
Their original paper proposed four modulo nine partition
conjectures, and a fifth companion was subsequently recorded
in Russell's doctoral thesis.
Kur\c{s}ung\"oz~\cite{Kursungoz} obtained Andrews--Gordon-type
double-sum representations for the first four conjectures,
while Uncu and Zudilin~\cite{UZ} supplied a double-sum
representation for the fifth.  Tsuchioka~\cite{Tsuchioka} gave a vertex
operator reformulation of the modulo nine conjecture. 
Recently, Xia~\cite{Xia} proved all five modulo nine
Kanade--Russell sum--product identities, together with
related generalized Nahm-sum dual identities.
For further results on  Rogers--Ramanujan type identities related to Kanade--Russell multisums and generalized Nahm sums, see, for example, \cite{BJM,ChernLi,mizuno-1,Mizuno,Rosengren,Wang12}.

The use of the reciprocal substitution $q\mapsto q^{-1}$
to obtain related identities has a substantial history.
Finite forms of the Rogers--Ramanujan identities, originating
in Schur's work and developed further by Andrews \cite{AndrewsFinite,AndrewsHexagon}, provide
polynomial identities to which this substitution can be
applied.
After multiplication by a suitable power of $q$, one may
take limits to obtain different infinite-series identities.
Andrews \cite{AndrewsFinite,AndrewsHexagon} developed this approach in connection with Baxter's
hard-hexagon model, revealing duality relations between
families of Rogers--Ramanujan type identities.
A related development was given by Mortenson
\cite{MortensonDual}, who studied the dual nature of partial
theta functions and Appell--Lerch sums.

Uncu and Zudilin~\cite{UZ} constructed finite versions of the five
modulo nine Kanade--Russell sums and investigated their reflected
limits. According as the truncation parameter is congruent to \(0\), \(1\), or
\(2\pmod 3\), the five finite identities yield fifteen reflected limits.   They proved five relations among these limits and proposed
ten remaining sum-product evaluations, including 
  two conjectured by Warnaar and recorded by Uncu and Zudilin~\cite{UZ}. The six symmetric
candidates have modulus-$45$ product combinations; the four asymmetric
candidates have individual product expressions. These are
$F_1,\ldots,F_{10}$ below. 
Konenkov  extended this program to four identities
conjectured by Hickerson, which  
  were recorded  in \cite[Section 3]{Konenkov}. He obtained finite forms and reflected
limits, proved four linear relations between the latter, and proposed
two cyclotomic product evaluations. Applying the coefficient-field
conjugation gives two additional evaluations.
  Konenkov's four proved linear relations are
not the same statements as these four product candidates.

Before stating the main results, we fix the notation and enumerate the
eighteen sum sides.  For \(n,r\in\mathbb Z\), the \(q\)-binomial
coefficient (or Gaussian polynomial) is defined by
\begin{equation*}
	\begin{bmatrix}
		n\\ r
	\end{bmatrix}_{q}
	:=
	\begin{cases}
		\displaystyle
		\frac{(q;q)_n}{(q;q)_r(q;q)_{n-r}},
		& 0\leq r\leq n,\\[8pt]
		0,
		& \text{otherwise}.
	\end{cases}
\end{equation*}

Define
\begin{equation}\label{eq:Q}
	\mathcal Q_\pm(r,s):=r^2\pm3rs+3s^2
	=\left(r\pm\frac32s\right)^2+\frac34s^2.
\end{equation} 
and 
\begin{align}
\mathcal S(a,b)&:=\sum_{r,s\ge0}
 \frac{q^{\mathcal Q_+(r,s)+ar+bs}}{(q;q)_r(q^3;q^3)_s},\notag\\
\N(a,b)&:=\sum_{r,s\ge0}
 \frac{q^{\mathcal Q_-(r,s)+ar+bs}}{(q;q)_r(q^3;q^3)_s},\label{Ndef}\\
\mathcal T_{a,b}^{(c)}&:=\sum_{r,s\ge0}
 \frac{q^{\mathcal Q_-(r,s)+ar+bs}}{(q^3;q^3)_s}
 \qb{3s-r+c}{r},\label{Tdef}
\end{align}
where  $a,b,c$ are integer parameters.

We denote the ten reflected sums of Uncu and Zudilin \cite{UZ} by
$F_1,\ldots,F_{10}$:
\begin{align}
F_1&:=\sum_{r,s\ge0}\frac{q^{\mathcal Q_-(r,s)-1}}{(q^3;q^3)_s}
                    \qb{3s-r-1}{r},\notag\\
F_2&:=\sum_{r,s\ge0}\frac{q^{\mathcal Q_-(r,s)}}{(q^3;q^3)_s}
                    \qb{3s-r}{r},\notag\\
F_3&:=\sum_{r,s\ge0}\frac{q^{\mathcal Q_-(r,s)-r+3s}}{(q^3;q^3)_s}
                    \qb{3s-r}{r},\notag\\
F_4&:=\sum_{r,s\ge0}\frac{q^{\mathcal Q_-(r,s)-r+3s}}{(q^3;q^3)_s}
                    \qb{3s-r+1}{r},\notag\\
F_5&:=1+\sum_{r,s\ge0}\frac{q^{\mathcal Q_-(r,s)+r}}{(q^3;q^3)_s}
                    \qb{3s-r-2}{r},\notag\\
F_6&:=\sum_{r,s\ge0}\frac{q^{\mathcal Q_-(r,s)+r-2}}{(q^3;q^3)_s}
                    \qb{3s-r-1}{r},\notag\\
F_7&:=\sum_{r,s\ge0}\frac{q^{\mathcal Q_-(r,s)+s}}{(q^3;q^3)_s}
\qb{3s-r}{r},\label{eq:F7}\\
F_8&:=\sum_{r,s\ge0}\frac{q^{\mathcal Q_-(r,s)+s-2}}{(q^3;q^3)_s}
\qb{3s-r-1}{r},\label{eq:F8}\\
F_9&:=(1+q)\sum_{r,s\ge0}\frac{q^{\mathcal Q_-(r,s)+2s}}{(q^3;q^3)_s}
\qb{3s-r}{r}+\sum_{r,s\ge0}\frac{q^{\mathcal Q_-(r,s)-r+5s+2}}{(q^3;q^3)_s}
\qb{3s-r+1}{r},\label{eq:F9}\\
F_{10}&:=(1+q)\sum_{r,s\ge0}\frac{q^{\mathcal Q_-(r,s)+2s-2}}{(q^3;q^3)_s}
\qb{3s-r-1}{r} +\sum_{r,s\ge0}\frac{q^{\mathcal Q_-(r,s)-r+5s}}{(q^3;q^3)_s}
\qb{3s-r}{r}.\label{eq:F10}
\end{align}
The Gaussian range and \eqref{eq:Q} imply coefficientwise convergence.
The nonzero summands have nonnegative exponents, including in the
formulas with an apparent negative shift.

The product evaluations of $F_7,F_8$ were conjectured by Warnaar
in terms of reflected limits and written as double-sum identities by
Uncu and Zudilin~\cite[(21)--(22)]{UZ}. The other eight evaluations
were conjectured by Uncu and Zudilin~\cite[(25)--(26), (28)--(33)]{UZ}.
For compact product notation, put
\begin{align*}
\br{a,b,c,d}&:=\frac{J_{45}}{J_3
(q^a,q^{45-a},q^b,q^{45-b},q^c,q^{45-c},q^d,q^{45-d};q^{45})_\infty}
\nonumber\\
&=\frac{J_{45}^5}{J_3J_{a,45}J_{b,45}J_{c,45}J_{d,45}}. 
\end{align*}

\begin{theorem}\label{thm:main}
The following six identities hold:
\begin{align}
F_1&=\br{2,8,11,20}+q^3\br{2,14,20,22}
                       -q^8\br{17,19,20,22},\label{eq:R1}\\
F_2&=\br{1,7,11,20}+q^6\br{11,13,14,20}
                       -q^6\br{8,14,19,20},\label{eq:R2}\\
F_3&=\br{2,5,14,22}-q^2\br{5,7,16,17}
                       -q^5\br{5,17,19,22},\label{eq:R3}\\
F_4&=\br{2,5,16,19}-q^2\br{5,8,14,19}
                       +q^2\br{5,11,13,14},\label{eq:R4}\\
F_5&=\br{4,7,10,13}-q^4\br{7,10,16,17}
                       -q^7\br{10,17,19,22},\label{eq:R5}\\
F_6&=\br{2,10,16,19}+q\br{4,10,16,22}
                       -q^2\br{8,10,14,19}.\label{eq:R6}
\end{align}
\end{theorem}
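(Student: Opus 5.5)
Each $F_i$ in Theorem~\ref{thm:main} has the shape $\mathcal T^{(c)}_{a,b}$ for suitable integers, with $F_5$ shifted by its constant term $1$. So the plan has two parts: first express each $\mathcal T^{(c)}_{a,b}$ through the Nahm-type sums $\N(a,b)$, whose dual product evaluations were established in \cite{Xia}; then assemble the resulting theta quotients into the modulus-$45$ combinations $\br{a,b,c,d}$.

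\textbf{Step 1: reduce $\mathcal T^{(c)}_{a,b}$ to $\N(a,b)$.} For fixed $s$ I would expand the Gaussian polynomial $\qb{3s-r+c}{r}$ through a $q$-binomial (Schur-type) decomposition. A convenient form is the recurrence
\[
\qb{n}{r}=\qb{n-1}{r}+q^{n-r}\qb{n-1}{r-1},
\]
or a finite $q$-Chu--Vandermonde splitting. Iterating this converts the finite sums into the limiting denominators $(q;q)_r(q^3;q^3)_s$, plus boundary corrections. Alternatively, I would use finite rational identities in the summation variable. The expected outcome is that each $F_i$ equals a fixed linear combination, with polynomial coefficients in $q$, of two sums $\N(a,b)$ and $\N(a',b')$ attached to the same residue class of the truncation parameter modulo $3$. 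Concretely, I would write down the contiguous relations satisfied by $\mathcal T^{(c)}_{a,b}$ in the shifts $a\mapsto a+1$, $b\mapsto b+3$, $c\mapsto c\pm1$ and match them against the corresponding relations for $\N$. This produces a $2\times2$ linear system whose unknowns are the two relevant $\N$-values and whose known quantities are combinations of the $F_i$. I would check that its determinant is a nonzero polynomial, hence invertible in $\mathbb Z[[q]]$; this is the ``nonsingular two-by-two system'' mentioned in the abstract.

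\textbf{Step 2: substitute the dual evaluations.} Insert the product formulas for $\N(a,b)$ from the author's earlier dual identities. These are presumably sums of quotients of the form $J_{a,9}$, $\Jbar_{a,9}$ over $J_1$ or $J_3$. The result expresses each $F_i$ as an explicit finite sum of theta quotients.

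\textbf{Step 3: prove the remaining theta-function identities.} The last task is to show that these theta quotients equal the three-term right-hand sides in \eqref{eq:R1}--\eqref{eq:R6}. After multiplying by $J_3$ and clearing denominators, each side becomes a weight-zero modular function, or a combination of them, on $\Gamma_1(45)$ (or $\Gamma_0(135)$ after the substitution $q\mapsto q^3$). I would prove these identities either with quintuple-product and Weierstrass three-term addition formulas, splitting modulus-$9$ thetas into modulus-$45$ pieces via $J_{a,m}$ dissections, or by the valence-formula method: verify a Sturm-bound number of coefficients after checking orders at all cusps.

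\textbf{Main obstacle.} I expect Step~1 to be the hard part. The difficulty is finding the right Schur-type decomposition so that the boundary terms, for instance the terms with $3s-r+c<r$ and the negative shifts in $F_1,F_6$, cancel exactly and leave only $\N$-sums that the earlier results evaluate. I would first test candidate relations numerically to a few hundred coefficients to pin down the coefficients of the $2\times2$ system, and then prove them by telescoping over $r$.
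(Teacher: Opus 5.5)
Your proposal has a genuine gap in Step~1, and the problem is the structure of the argument, not a missing technicality. The outcome you aim for does not occur: you want each $F_i$ to be a polynomial-coefficient combination of two $\N$-values, with a $2\times2$ system inverted over $\mathbb Z[[q]]$.

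The bridge between reflected sums and Nahm sums is the Schur identity $\sum_{r\ge0}q^{r^2-mr}/(q;q)_r=G(q)\mathsf A_m+H(q)\mathsf B_m$. Here $\mathsf A_m,\mathsf B_m$ are exactly the reflected inner sums in your $F_i$. Summing against $q^{3s^2+bs}/(q^3;q^3)_s$ gives one relation per pair, with Rogers--Ramanujan coefficients:
\[
qH(q)F_1+G(q)F_2=\N(0,0),\quad H(q)F_3+G(q)F_4=\N(-1,3),\quad H(q)F_5+q^2G(q)F_6=\N(1,0).
\]
The unknowns are the two $F$'s, and one equation cannot determine them. The independent second equation has neither polynomial coefficients nor an $\N$-value on the right: $G(q^9)F_1-qH(q^9)F_2=\E X_1$, and similarly with $(F_3,F_4,\E X_2)$ and $(F_5,F_6,\E X_3)$. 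Solving gives, for example, $F_1=\Delta^{-1}\bigl(G(q)\E X_1+qH(q^9)\N(0,0)\bigr)$ with $\Delta=G(q)G(q^9)+q^2H(q)H(q^9)=J_3^2/(J_1J_9)$. The Rogers--Ramanujan factors are what produce modulus $45=5\cdot 9$, whereas every $\N(a,b)$ is a modulus-$9$ product. So matching contiguous relations of $\mathcal T^{(c)}_{a,b}$ with those of $\N$, telescoping in $r$, or fitting polynomial coefficients numerically would find nothing.

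The missing idea is the mechanism behind this second relation, and it is most of the paper's argument.
\begin{itemize}
\item For $\mathsf w=\mathsf A,\mathsf B$, the transforms $\mathscr S_{\mathsf w}(q^{3n})$, $\mathscr S_{\mathsf w}(q^{3n+3})+\mathscr T_{\mathsf w}(q^{3n})$, $\mathscr T_{\mathsf w}(q^{3n})$ form vectors $\mathbf U_n,\mathbf V_n$. These obey $\mathbf U_n=\mathbf D(q^{3n})\mathbf U_{n+1}$ (likewise for $\mathbf V_n$), with $\mathbf U_0=(F_2,F_4,q^2F_6)$ and $\mathbf V_0=(qF_1,F_3,F_5)$.
\item A second solution $\mathbf Z_n$ of the same recurrence is built from divided differences of $\mathcal D_{9,c}$ at explicit nodes. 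Its recurrence comes from a residue-sum argument together with two rational certificate identities.
\item The large-$n$ behaviour of $\mathbf Z_n$ is governed by $G(q^9)$ and $H(q^9)$. The uniqueness lemma then gives $\mathbf Z_n=-qH(q^9)\mathbf U_n+q^{-1}G(q^9)\mathbf V_n$. At $n=0$, where $\mathbf Z_0=\E(X_1,q^{-1}X_2,q^{-1}X_3)$, this is the second relation.
\end{itemize}

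Your Step~3 must then be aimed differently. Instead of simplifying a formula for $F_i$, verify by the valence-formula method that $R_{2k-1},R_{2k}$ satisfy both relations, after inserting the dual evaluations of $\N(0,0)$, $\N(-1,3)$, $\N(1,0)$. The system with determinant $\Delta$ (resp.\ $q^2\Delta$) then forces $F_i=R_i$.
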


\begin{theorem}\label{thm:asym}
The four asymmetric reflected sums satisfy
\begin{align}
F_7&=\frac1{(q^2;q^3)_\infty(q^3;q^9)_\infty
 (q^9,q^{36};q^{45})_\infty},\label{eq:R7}\\
F_8&=\frac1{(q^2;q^3)_\infty(q^3;q^9)_\infty
 (q^{18},q^{27};q^{45})_\infty},\label{eq:R8}\\
F_9&=\frac1{(q;q^3)_\infty(q^6;q^9)_\infty
 (q^9,q^{36};q^{45})_\infty},\label{eq:R9}\\
F_{10}&=\frac1{(q;q^3)_\infty(q^6;q^9)_\infty
 (q^{18},q^{27};q^{45})_\infty}.\label{eq:R10}
\end{align}
\end{theorem}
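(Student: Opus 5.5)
We will prove Theorem~\ref{thm:asym} by first rewriting each $F_j$ in terms of the parameterized sums $\mathcal T^{(c)}_{a,b}$ of \eqref{Tdef}, and then converting these to $q$-Airy-type single sums that can be matched against theta quotients. Concretely, $F_7=\mathcal T^{(0)}_{0,1}$ and $F_8=q^{-2}\mathcal T^{(-1)}_{0,1}$, while $F_9$ and $F_{10}$ are the combinations displayed in \eqref{eq:F9} and \eqref{eq:F10}.

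\textbf{Step 1: collapse the double sums.} For fixed $s$, the inner sum over $r$ has the shape
\[
\sum_r q^{r^2-3rs+ar}\qb{3s-r+c}{r}.
\]
This is a Fibonacci/Schur-type polynomial in the variable $3s$. It satisfies the three-term recurrence
\[
P_{n}=P_{n-1}+q^{\,\cdot}P_{n-2},
\]
which is inherited from the Pascal rule for $\qb{n-r}{r}$. Applying $q\mapsto q^{-1}$ to a finite Rogers--Ramanujan/Schur polynomial turns this inner sum into an explicit combination of terms $q^{-\text{quadratic}(s)}$ times truncated theta-like series.

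Substituting back, the $s$-sum over $1/(q^3;q^3)_s$ becomes a sum of the form
\[
\sum_s \frac{q^{3s^2+\beta s}}{(q^3;q^3)_s}\,(\text{partial theta in } s),
\]
that is, a $q$-Airy function in base $q^3$ coupled to a partial theta function.

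\textbf{Step 2: apply the $q$-Airy--theta connection.} Next we invoke the Nahm-sum dual evaluations of \cite{Xia}, which express the duals of $\N(a,b)$ and $\mathcal S(a,b)$ as products, together with the connection formula for $q$-Airy functions. The $q$-Airy function $\sum_s q^{3s^2+\beta s}x^s/(q^3;q^3)_s$ satisfies a first-order $q$-difference equation in $x$. Its connection formula writes it as a theta quotient plus a term annihilated by the relevant specialization.

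After this step, each of $F_7,\dots,F_{10}$ should become a single theta quotient. The natural building blocks are:
\begin{itemize}
\item a factor $1/(q^2;q^3)_\infty$ or $1/(q;q^3)_\infty$, coming from the $r$-direction, which behaves like Euler's identity in base $q^3$ after reflection;
\item a modulus-$45$ Rogers--Ramanujan factor, coming from $G(q^9)$ or $H(q^9)$, which accounts for $1/(q^9,q^{36};q^{45})_\infty$ and $1/(q^{18},q^{27};q^{45})_\infty$;
\item a factor $1/(q^3;q^9)_\infty$ or $1/(q^6;q^9)_\infty$.
\end{itemize}
So the plan is to identify the transformed sum as a product of \eqref{eq:intro-RR1} or \eqref{eq:intro-RR2} at $q^9$ with a base-$q^3$ Euler product.

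For $F_9$ and $F_{10}$, the factor $(1+q)$ and the second sum should arise as a divided-difference combination in the parameter $a$. Specifically, $\mathcal T^{(0)}_{0,2}$ and $\mathcal T^{(1)}_{-1,5}$ will be related via the contiguous relation obtained from the Pascal rule,
\[
\qb{n+1}{r}=\qb{n}{r}+q^{n+1-r}\qb{n}{r-1}.
\]
We will verify that this relation makes the combination a single $\mathcal T$-type object. Its evaluation then mirrors that of $F_7,F_8$, with residues $1\leftrightarrow2$ mod $3$ swapped.

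\textbf{Main obstacle.} The hard step is Step 2: carrying out the $q$-Airy--theta connection exactly, including the correct normalizing powers of $q$ and the vanishing of the non-theta (Airy) remainder. I would first handle $F_7$ and check its first several coefficients numerically. The remaining three cases should then follow from the same computation with shifted parameters. Finally, $F_8$ and $F_{10}$ should come from $F_7$ and $F_9$ by the $G\leftrightarrow H$ symmetry at $q^9$.
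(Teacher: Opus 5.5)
Your setup ($F_7=\mathcal T^{(0)}_{0,1}$, $F_8=q^{-2}\mathcal T^{(-1)}_{0,1}$, and the three-term recurrence for the inner sums) is fine. Step~2, however, carries the entire proof, and it rests on an incorrect picture and supplies no mechanism.

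\textbf{Problems with the proposal.}

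\emph{The Airy series is second order.} The series $f(x)=\sum_s q^{3s^2+\beta s}x^s/(q^3;q^3)_s$ satisfies $f(x)-f(q^3x)=q^{\beta+3}xf(q^6x)$. This is a \emph{second}-order $q$-difference equation, not a first-order one. That is exactly why $f$ is not a theta quotient: at $x=1$, $\beta=0$ it is $G(q^3)$, which is a product only by Rogers--Ramanujan.

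\emph{The connection formula does not split off a vanishing remainder.} Morita's formula $\thp_p(X/p)\Ai_p(-X)+\thp_p(-X/p)\Ai_p(X)=2(p^2;p^2)_\infty\mathscr R_{p^2}(-p^3/X^2)$ trades a theta-weighted \emph{pair} of Airy values for another Airy-type series. Nothing vanishes. The Airy pieces can only be eliminated by solving a linear system built from two independent such relations.

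\emph{Step~1 is not exact.} Reflecting Schur's finite identity leaves Gaussian polynomials $\qb{3s}{k}$ whose lower index $k$ depends on $s$. So the $s$-sum does not become an Airy function times a partial theta function.

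\emph{There is no $G\leftrightarrow H$ symmetry} carrying $F_7$ to $F_8$.

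\textbf{Missing ideas.}

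\emph{Exact Airy basis.} The recurrence $\mathsf w_m=\mathsf w_{m-1}+q^{1-m}\mathsf w_{m-2}$ you noticed has the explicit Airy solutions $u_m,v_m$. Using the Wronskian $\mathcal W_p\equiv2$, one gets $\mathsf A_m=\frac12(\mathsf C_-(\rho)u_m+\mathsf C_+(\rho)v_m)$ \emph{exactly}. Hence $F_7=\frac12(\mathsf C_-\mathscr S_u(q)+\mathsf C_+\mathscr S_v(q))$.

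\emph{Two applications of Morita.} Apply the formula once in base $q^{1/2}$ summing over $r$, and once in base $q^{3/2}$ summing over $s$. This gives the $2\times2$ system $\tau_1\mathscr S_u(q)+\tau_2\mathscr S_v(q)=2J_1\N(0,1)$ and $\tau_3\mathscr S_u(q)+\tau_4\mathscr S_v(q)=2J_3\N(0,1)$.

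\emph{Theta evaluations to solve the system.} Solving it and forming the $\mathsf C_\pm$-combination requires:
\begin{itemize}
\item the quintuple-product evaluation of $\mathsf C_\pm$ and \eqref{CH};
\item $\tau_1\tau_4-\tau_2\tau_3=-2\rho\Theta$ and $\mathsf C_-\tau_4-\mathsf C_+\tau_3=-2\rho G(q)W/J_5$;
\item the binary-form evaluations \eqref{binaryeval} and $\Delta=J_3^2/(J_1J_9)$.
\end{itemize}
Together these give $F_7=\N(0,1)G(q^9)/\Delta$. Only at this point does Xia's dual evaluation $\N(0,1)=P_4\Delta$ enter and finish the case.

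\emph{The $F_9$ case.} You must verify, via the contiguous relations, that both rows produce the \emph{same} Nahm sum $N_5=\N(0,2)+q^2\N(-2,5)$. That is precisely what the $(1+q)$-combination achieves, and it is not automatic.

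\emph{$F_8$ and $F_{10}$.} These follow from the Schur decomposition $G(q)F_{2i-1}+q^2H(q)F_{2i}=N_i=P_i\Delta$, combined with $\Delta=G(q)G(q^9)+q^2H(q)H(q^9)$. They do not come from any symmetry. Numerically checking coefficients of $F_7$, as you suggest, would not substitute for any of these steps.
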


Throughout this paper,  $\omega$ is   a primitive cube root and  so $\omega^2+\omega+1=0$.
We always use $\zeta\in\{\omega,\omega^2\}$.  
The remaining eight evaluations are the Hickerson and cyclotomic
families recorded by Konenkov~\cite{Konenkov}.
Set
\begin{align}
\mathcal H_{1,\zeta}&:=\mathcal S(1,1)-\zeta q\mathcal S(2,4),&
\mathcal H_{2,\zeta}&:=\mathcal S(0,-1)+\zeta^2\mathcal S(0,2),\label{h-51}\\
\mathcal K_{1,\zeta}&:=\mathcal T_{1,-1}^{(-1)}-\zeta\mathcal T_{0,2}^{(0)},&
\mathcal K_{2,\zeta}&:=\mathcal T_{1,-2}^{(-1)}+\zeta^2q\mathcal T_{-2,4}^{(2)}.\label{h-52}
\end{align}

\begin{theorem}[Hickerson's four identities]\label{Hthm}
For  each $\zeta\in\{\omega,\omega^2\}$,
\begin{align}
 \mathcal H_{1,\zeta}
 &=\frac{(q^6;q^9)_\infty(\zeta q,\zeta^2q^3;q^3)_\infty}
 {(q^2;q^3)_\infty},\label{H1}\\
 \mathcal H_{2,\zeta}
 &=-\zeta\frac{(q^3;q^9)_\infty(\zeta^2q^2,\zeta q^3;q^3)_\infty}
 {(q;q^3)_\infty}.\label{H2}
\end{align}
\end{theorem}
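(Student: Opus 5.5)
We prove Theorem~\ref{Hthm} by separating each $\mathcal H_{i,\zeta}$ into its $\zeta$-free part and its $\zeta$-coefficient. The left side of \eqref{H1} is $\mathcal S(1,1)-\zeta q\mathcal S(2,4)$, which is linear in $\zeta$ with coefficients in $\mathbb Z[[q]]$. The right side can also be written as $A(q)+\zeta B(q)$ with $A,B\in\mathbb Z[[q]]$: expand $(\zeta q,\zeta^2q^3;q^3)_\infty$ and reduce modulo $\zeta^2=-1-\zeta$. Because $1,\zeta$ are linearly independent over $\mathbb Q$, \eqref{H1} becomes two $\zeta$-free identities,
\[
\mathcal S(1,1)=A(q),\qquad -q\,\mathcal S(2,4)=B(q).
\]
Conjugation $\zeta\mapsto\zeta^2$ exchanges the two cases $\zeta=\omega,\omega^2$, so it is enough to treat $\zeta=\omega$. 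The same splitting applies to \eqref{H2}, using $\zeta^2=-1-\zeta$ on the left.

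To compute $A$ and $B$, I would use $(x;q^3)_\infty(\omega x;q^3)_\infty(\omega^2x;q^3)_\infty=(x^3;q^9)_\infty$. This lets us write
\[
(\omega q;q^3)_\infty=\frac{(q^3;q^9)_\infty}{(q;q^3)_\infty(\omega^2q;q^3)_\infty},
\]
and similarly for the factor at $q^3$. The even/odd parts are then isolated with the Jacobi triple product. Concretely, $(\zeta q,\zeta^2q^3;q^3)_\infty$ has the form $(\zeta q,q^3/(\zeta q);q^3)_\infty$ up to a shift, which is a theta function with argument $\zeta q$. Its Laurent expansion in $\zeta$ is
\[
\sum_n (-1)^n\zeta^n q^{(3n^2-n)/2}\big/J_3,
\]
and sorting $n$ by residue mod $3$ gives $A$ and $B$ as explicit theta quotients in $q^{27}$-type products. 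The targets are therefore ordinary single products.

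The main step is to evaluate the four double sums $\mathcal S(1,1)$, $\mathcal S(2,4)$, $\mathcal S(0,-1)$, $\mathcal S(0,2)$ in closed form. I would not attack these individually. Instead I would use the parameterized family $\mathcal S(a,b)$ and its contiguous relations, which come from the divided-difference identities. Shifting $r$ (via $1/(q;q)_r-1/(q;q)_{r-1}=q^r/(q;q)_r$) and $s$ (via the $q^3$ analogue) gives
\[
\mathcal S(a,b)-\mathcal S(a+1,b)=q^{1+a}\,\mathcal S(a+2,b+3),
\qquad
\mathcal S(a,b)-\mathcal S(a,b+3)=q^{3+b}\,\mathcal S(a+3,b+6).
\]
These are the low-order divided differences mentioned in the abstract. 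Together with the $\mathcal Q_+$/$\mathcal Q_-$ relationship to the $\N(a,b)$ sums, they should express each required $\mathcal S$ as a combination of $\N$-type sums whose product evaluations are among the Nahm-sum dual evaluations established in \cite{Xia}. I expect this to be the main obstacle: finding the exact linear combination of known evaluations that produces $\mathcal S(1,1)$ and the other three, and then simplifying a sum of products into the single product $A$ or $B$. For that simplification I would try quintuple-product or Weierstrass three-term theta identities on modulus $9$ and $27$.

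Once the four evaluations are in hand, recombining them with $\zeta$ gives \eqref{H1} and \eqref{H2}. The overall factor $-\zeta$ in \eqref{H2} is just bookkeeping. As a safeguard, each product identity can be checked against many initial $q$-coefficients before the theta manipulations are finalized.
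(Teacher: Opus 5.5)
Your splitting of \eqref{H1} into $\mathcal S(1,1)=A$ and $-q\mathcal S(2,4)=B$ is a valid reformulation. However, the two claims your plan rests on both fail, so there is a genuine gap.

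\textbf{The targets are not shown to be single products.} $(\zeta q,\zeta^2q^3;q^3)_\infty$ is not a theta function ``up to a shift.'' The Jacobi partner of $\zeta q$ in base $q^3$ is $q^3/(\zeta q)=\zeta^2q^2$, not $\zeta^2q^3$, and $(\zeta^2q^2;q^3)_\infty$ and $(\zeta^2q^3;q^3)_\infty$ do not differ by a finite factor. Even for a genuine theta series $T_0+\zeta T_1+\zeta^2T_2$ sorted by $n \bmod 3$, the reduction $\zeta^2=-1-\zeta$ gives $A=T_0-T_2$ and $B=T_1-T_2$, which are differences of two series. Granting the theorem, $\mathcal S(1,1)=(\omega^2\mathsf H_{1,\omega}-\omega\mathsf H_{1,\omega^2})/(\omega^2-\omega)$ is a combination of two conjugate products, and nothing suggests it collapses to one product.

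\textbf{There is no route to $\mathcal S(1,1)$ or $\mathcal S(2,4)$ individually.} The contiguous relations \eqref{Scont1}--\eqref{Scont2} are elementary index shifts, not the divided-difference identities. They are homogeneous and never leave the $\mathcal Q_+$ family. The sums $\N(a,b)$ belong to the dual form $\mathcal Q_-$, and no linear relation carries their dual evaluations over to $\mathcal S$. Modulo contiguous relations, $\mathcal S(1,1)$, $\mathcal S(1,4)$ and $\mathcal S(2,4)$ span the whole class $b\equiv1\pmod 3$. The only real product evaluation available there is $\mathcal S(1,4)+q\mathcal S(2,4)=P_5$, a single linear condition, which cannot pin down two unknowns.

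The missing idea is the family of parameterized divided-difference evaluations in \cite[Theorem 5.1]{Xia}.
\begin{itemize}
\item Set $Z_n^{(2)}:=\Ec(q;q)_n\,\mathcal P_{3,q^{3n+4}}[z_{n,0}^{(2)},\ldots,z_{n,n}^{(2)}]$ with nodes $z_{n,j}^{(2)}=\omega q^{n+2+j}+\omega^2q^{2n+2-j}$. Then $Z_n^{(2)}=(1+q^{n+1})\mathcal S(2n+2,3n+4)+q^{2n+2}\mathcal S(2n+3,3n+7)$.
\item Likewise $Z_n^{(1)}=\mathcal S(2n+1,3n+2)$ for the nodes $\omega q^{n+1+j}+\omega^2q^{2n+1-j}$.
\item Each node has the form $\alpha+\beta$ with $\alpha\beta=c$ and $\alpha,\beta\in\{\omega q^k,\omega^2q^k\}$. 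Hence every node value of $\Ec\mathcal P_{3,c}$ is, up to a finite factor, one of $\mathsf H_{i,\omega}$, $\mathsf H_{i,\omega^2}$, or $P_5$ (resp.\ $P_4$). This is exactly where the cube roots of unity in Hickerson's identities come from.
\item Take $n=1,2$ and use the contiguous relations to rewrite the series sides in terms of $\mathcal S(1,1)$, $\mathcal S(2,4)$, $P_5$ (resp.\ $\mathcal S(0,-1)$, $\mathcal S(0,2)$, $P_4$).
\item This gives two homogeneous linear equations for the errors $e_1,e_2$ of the two conjugate identities. The determinants are $(\omega^2-\omega)(1+q+q^3)$ and $\omega^2-\omega$ respectively, both nonzero, so $e_1=e_2=0$.
\end{itemize}
Both values of $\zeta$ are proved at once, and no individual $\mathcal S$ is ever evaluated in closed form. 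Your plan has nothing that supplies these two extra product relations.
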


\begin{theorem}[Konenkov's four reflected identities]\label{Kthm}
For each $\zeta\in\{\omega,\omega^2\}$,
\begin{align}
\mathcal K_{1,\zeta}
 &=-\zeta\,
 \frac{(q^{15};q^{45})_\infty(\zeta q^3;q^3)_\infty (\zeta^2q,\zeta^2q^2,\zeta^2q^4,\zeta^2q^8,\zeta^2q^{10};q^{15})_\infty}
 {(q^5,q^{11},q^{14};q^{15})_\infty (q^3,q^{12},q^{18},q^{27};q^{45})_\infty}
  ,\label{K1}\\
\mathcal K_{2,\zeta}
 &=-\zeta\,
 \frac{(q^{30};q^{45})_\infty(\zeta^2q^3;q^3)_\infty (\zeta q^5,\zeta q^7,\zeta q^{11},\zeta q^{13},\zeta q^{14};q^{15})_\infty}
 {(q,q^4,q^{10};q^{15})_\infty (q^{18},q^{27},q^{33},q^{42};q^{45})_\infty}.\label{K2}
\end{align}
\end{theorem}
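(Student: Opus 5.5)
The plan for Theorem~\ref{Kthm} is to reduce the reflected $\mathcal T$-sums to the ordinary double sums $\N(a,b)$ of \eqref{Ndef}. Those sums have known product or Nahm-dual evaluations from the author's earlier work \cite{Xia}, and we then split by the cube-root character.

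\emph{Step 1: remove the Gaussian polynomial.} In $\mathcal T_{a,b}^{(c)}$ we apply the $q$-Chu--Vandermonde or Euler expansion to $\qb{3s-r+c}{r}$. Equivalently, we use the generating function $\sum_r q^{\binom r2}x^r\qb{N-r}{r}$ together with the standard finite-to-infinite passage. This should express each $\mathcal T_{a,b}^{(c)}$ as a finite linear combination, with coefficients in $\mathbb Z[q,q^{-1}]$, of sums $\N(a',b')$. The reason is that the reflection of the finite Kanade--Russell forms sends $(q;q)_r$-type denominators to Gaussian numerators. One can instead organize this via the divided-difference parameterization mentioned in the abstract. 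Introduce a parameter $z$ through $\mathcal T(z)=\sum q^{\mathcal Q_-(r,s)}z^{s}\cdots$. Then each $\mathcal T^{(c)}_{a,b}$ is a low-order divided difference in $z$ at $z\in\{1,q,q^2,\dots\}$ of a single generating function, and that function satisfies a contiguous relation.

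\emph{Step 2: separate into components by residue.} Since $\zeta^3=1$, the combination $\mathcal K_{1,\zeta}=\mathcal T_{1,-1}^{(-1)}-\zeta\mathcal T_{0,2}^{(0)}$ can be proved by evaluating the two pieces separately. Both sides are of the form $A+\zeta B$ with $A,B\in\mathbb Z[[q]]$. We expand the right side of \eqref{K1} by writing
\[
(\zeta x;q^{15})_\infty\,(\zeta^2 y;q^{15})_\infty\cdots
\]
via the Jacobi triple product and quintuple product. Using $\zeta^2=-1-\zeta$, we reduce it to $P_0+\zeta P_1$ with $P_0,P_1$ explicit theta quotients. Because $\{1,\zeta\}$ is a basis of $\mathbb Q(\zeta)$ over $\mathbb Q$, \eqref{K1} is equivalent to the two real identities $\mathcal T_{1,-1}^{(-1)}=P_0$ and $-\mathcal T_{0,2}^{(0)}=P_1$. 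The conjugate case $\zeta=\omega^2$ then follows automatically by applying the field automorphism. The same treatment applies to \eqref{K2}.

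\emph{Step 3: evaluate the $\N$-combinations.} After Step 1, each required real identity becomes a statement about a combination of $\N(a',b')$. For these I would use the $q$-Airy--theta connection. Summing over $s$ first, the inner sum in $s$ with base $q^3$ is a $q$-Airy-type function. The connection formula writes it as a theta function times a bilateral sum plus a complementary $q$-Airy term, and the complementary terms should cancel in the specific combinations arising here. What remains is a product of theta functions in modulus $45$ and $15$. This matches $P_0,P_1$ after standard dissections, such as the $3$-dissection of $J_{a,15}$ into modulus-$45$ pieces.

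The main obstacle is Step 3, namely verifying that the non-theta $q$-Airy parts cancel and identifying the resulting theta quotients with $P_0,P_1$. If the direct cancellation is messy, my fallback is to prove the theta-function identities by the modular-function method: check that both sides are modular functions on $\Gamma_1(45)$ and compare enough coefficients to exceed the Sturm bound.
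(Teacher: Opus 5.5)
Your Step~1 is false, and Steps~2--3 depend on it. The reflected sums $\mathcal T^{(c)}_{a,b}$ are not finite $\mathbb Z[q,q^{-1}]$-combinations of the $\N(a',b')$; the relation runs the other way. By Lemma~\ref{lem:schur}, $\N(0,k)=G(q)\mathscr S_{\mathsf A}(q^k)+H(q)\mathscr S_{\mathsf B}(q^k)$ and $\N(1,k)=G(q)\mathscr T_{\mathsf A}(q^k)+H(q)\mathscr T_{\mathsf B}(q^k)$. So each Nahm sum gives only one relation, with coefficients $G(q),H(q)$, between an $\mathsf A$-transform and a $\mathsf B$-transform. By contrast, $\mathcal K_{1,\omega}=\mathscr T_{\mathsf A}(q^{-1})-\omega\mathscr S_{\mathsf A}(q^2)$ and $\mathcal K_{2,\omega}$ involve $\mathsf A$-transforms alone, and no $q$-Chu--Vandermonde manipulation of $\qb{3s-r+c}{r}$ separates them.

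There is also a structural obstruction. By the contiguous relations, the $\N(a',b')$ with $b'$ in a fixed residue class modulo $3$ span a space of dimension at most three over $\mathbb Q(q)$. The known evaluations of such a basis are base-$q^3$ products with cube-root twists, such as $\Ec(\omega q^2,\omega^2q^3;q^3)_\infty$, and Laurent-polynomial combinations of these cannot produce the $q^{45}$- and $q^{15}$-factors in \eqref{K1}--\eqref{K2}. Already in the untwisted case, $F_7=\N(0,1)\,G(q^9)/\Delta$, with a non-rational multiplier.

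Step~3 does not repair this. The inner $s$-sum of $\N$ is $\mathscr R_{q^3}(-q^{b-3r})$, which sits on the \emph{right} of Morita's formula \eqref{Morita}. That formula converts the quarter-exponent Airy sums $\mathscr S_u,\mathscr S_v,\mathscr T_u,\mathscr T_v$ \emph{into} Nahm sums, so applying it to $\N$ runs in the wrong direction. The hoped-for cancellation of complementary terms has no mechanism behind it, and nothing in your plan generates the modulus-$45$ data.

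The missing idea is how the paper produces that multiplier. It expands $\mathsf A_m=\tfrac12\bigl(\mathsf C_-(\rho)u_m+\mathsf C_+(\rho)v_m\bigr)$ in the $q$-Airy basis of the Schur recurrence, so that $\mathcal K_{i,\omega}=\tfrac12\bigl(\mathsf C_-(\rho)\mathscr U_{i,\omega}+\mathsf C_+(\rho)\mathscr V_{i,\omega}\bigr)$. Morita's formula at the two bases $p=q^{1/2}$ and $p=q^{3/2}$ (Lemma~\ref{lem:directions}), together with contiguous relations, gives a $2\times2$ system. Its coefficients are the theta values $\tau_1,\dots,\tau_4$, and its right side is $2\mathcal N_{i,\omega}(J_1,\omega^2J_3)^{\mathsf T}$; the twisted second row is where the cube root enters. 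Solving the system uses the quintuple-product evaluation of $\mathsf C_\pm(\rho)$, the relation \eqref{CH}, and the binary theta evaluations of $\Theta$ and $W$. The result is $\mathcal K_{i,\omega}=\mathcal N_{i,\omega}\mathcal M(\omega^2)$, and $G(q^9)$ and $H(q)$ enter through $\mathcal M(\omega^2)$.

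The factors $\mathcal N_{i,\omega}$ are base-$q^3$ products obtained from low-order divided differences of Xia's families (Lemma~\ref{duallemma}). The final form of \eqref{K1}--\eqref{K2} then comes from the twisted dissection $(1-\omega)J_9G(q^9)-J_1H(q)=-\omega\,j(\omega^2q;\omega q^5)$. Your Step~2 split into $1,\zeta$ components is legitimate but does not address this issue. Your modular-function fallback would only help once a correct reduction to theta-function identities is in place.
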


The paper is organized as follows. 
Section 2 collects the parameter recurrences, Schur identities, and
notation for divided differences used in several proofs. Sections 3--6
prove Theorems~\ref{thm:main}, \ref{thm:asym}, \ref{Hthm}, and
\ref{Kthm}, respectively. In each section, the auxiliary results are
established before the proof of the main theorem. The Airy identities
proved in Section 4 are also used in Section 6. The final section
summarizes the scope and remaining directions.

\section{Preliminaries}\label{sec:prelim}

For a Laurent series $f(q)$, let $\ord_q f$ be its least exponent, with
$\ord_q0=+\infty$. We write $f=O(q^L)$ when
$f\in q^L\mathbb C[[q]]$, componentwise for vectors. Here and throughout,    $\mathbb C[[q]]$ 
denotes the ring of formal power series in \(q\) with complex coefficients. 
 In addition, 
 we write $\mathbb{C}((q))$ for the set of formal Laurent
series in $q$ with complex coefficients:
\[
\mathbb{C}((q))
=
\left\{
\sum_{n=N}^{\infty} a_n q^n :
N\in\mathbb{Z},\ a_n\in\mathbb{C}
\right\}.
\]
Thus each element contains only finitely many terms with
negative powers of $q$, and no analytic convergence is assumed. 
 Positive quadratic
growth in \eqref{eq:Q} and uniform bounds for Gaussian coefficients
justify the coefficientwise limits below. The same estimates imply
local uniform convergence on compact subsets of $0<|q|<1$.

Moreover, for a formal power series $f(z)=\sum_{k\ge0}a_kz^k$ and $m\in \mathbb{N}_0$,
we write
\[
[z^m]f(z):=a_m
\]
for the coefficient of $z^m$ in $f(z)$.

Throughout this paper, define 
\begin{align*}
  \Ec&:=\frac{J_3}{J_1},\qquad \qquad \E:=\frac{J_9}{J_3},\qquad \qquad 
		\Delta:=G(q)G(q^9)+q^2H(q)H(q^9),\\
		X_1&:=(q,q^8;q^9)_\infty,\quad\qquad X_2:=(q^2,q^7;q^9)_\infty,\quad\qquad 
		X_3:=(q^4,q^5;q^9)_\infty,\\
		P_1&:=\frac{\E}{X_1},\  P_2:=\frac{\E}{X_2},\ 
		P_3:=\frac{\E}{X_3},\ 
		P_4:=\frac1{(q^2;q^3)_\infty(q^3;q^9)_\infty},\ 
		P_5:=\frac1{(q;q^3)_\infty(q^6;q^9)_\infty}.
\end{align*}

Since $\omega$ is a primitive cube root of unity, we have
\begin{equation}\label{cyclo}
 (1-x)(1-\omega x)(1-\omega^2x)=1-x^3.
\end{equation}

\begin{lemma} \label{lem:parameter-recurrences}
For fixed $b$, the two parameter families satisfy
\begin{align}
&\mathcal S(a+1,b)
=-q^{a+1-b}\mathcal S(a-2,b)
 +(q^{a+1-b}-q)\mathcal S(a-1,b)
 +(q^{2a-b}+q+1)\mathcal S(a,b),\label{Sfrec}\\
&q\N(a,b)=(1+q+q^{2a+b+2})\N(a+1,b)
 -(1-q^{a+2})\N(a+2,b)-q^{a+2}\N(a+3,b).\label{Nfrec}
\end{align}
\end{lemma}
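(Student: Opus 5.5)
The plan is to derive two elementary $q$-difference relations for each family, one from each summation index, and then eliminate the shifts in the second parameter so that only shifts in $a$ with $b$ fixed remain. The tools are
\[
\frac{1-q^{r}}{(q;q)_r}=\frac1{(q;q)_{r-1}},\qquad
\frac{1-q^{3s}}{(q^3;q^3)_s}=\frac1{(q^3;q^3)_{s-1}},
\]
together with the shift formulas
\[
\mathcal Q_\pm(r+1,s)=\mathcal Q_\pm(r,s)+2r+1\pm3s,\qquad
\mathcal Q_\pm(r,s+1)=\mathcal Q_\pm(r,s)\pm3r+6s+3.
\]
All manipulations are coefficientwise in $\mathbb C[[q]]$, so shifting summation indices is legitimate. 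The $r=0$ and $s=0$ terms drop out because of the factors $1-q^r$ and $1-q^{3s}$.

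\textbf{Relations for $\mathcal S$.} Multiplying the summand by $1-q^r$ and shifting $r\mapsto r+1$ gives
\[
\mathcal S(a,b)-\mathcal S(a+1,b)=q^{a+1}\mathcal S(a+2,b+3). \tag{A}
\]
Doing the same in $s$ with $1-q^{3s}$ gives
\[
\mathcal S(a,b)-\mathcal S(a,b+3)=q^{b+3}\mathcal S(a+3,b+6). \tag{B}
\]
Next I express every term of (B) through $\mathcal S(\cdot,b)$ using (A) at shifted arguments:
\begin{itemize}
\item (A) at $(a-2,b)$ gives $\mathcal S(a,b+3)=q^{1-a}\bigl(\mathcal S(a-2,b)-\mathcal S(a-1,b)\bigr)$;
\item (A) at $(a-1,b)$ gives $\mathcal S(a+1,b+3)=q^{-a}\bigl(\mathcal S(a-1,b)-\mathcal S(a,b)\bigr)$;
\item (A) at $(a,b)$ gives $\mathcal S(a+2,b+3)=q^{-a-1}\bigl(\mathcal S(a,b)-\mathcal S(a+1,b)\bigr)$;
\item (A) at $(a+1,b+3)$ gives $\mathcal S(a+3,b+6)=q^{-a-2}\bigl(\mathcal S(a+1,b+3)-\mathcal S(a+2,b+3)\bigr)$.
\end{itemize}
Substituting the second and third items into the fourth, and then the result into (B), yields a linear relation among $\mathcal S(a-2,b),\dots,\mathcal S(a+1,b)$. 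Solving it for $\mathcal S(a+1,b)$ should give \eqref{Sfrec}. The check is routine bookkeeping: multiply through by the appropriate power of $q$ and collect the coefficients $-q^{a+1-b}$, $q^{a+1-b}-q$ and $q^{2a-b}+q+1$.

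\textbf{Relations for $\N$.} The same two steps with $\mathcal Q_-$ give
\[
\N(a,b)-\N(a+1,b)=q^{a+1}\N(a+2,b-3), \qquad
\N(a,b)-\N(a,b+3)=q^{b+3}\N(a-3,b+6).
\]
Here the $b$-shifts in the first relation go downward. So I would rewrite the $r$-relation at suitable shifts to express $\N(a+2,b+3)$, $\N(a+1,b+3)$ and $\N(a,b+6)$ in terms of $\N(\cdot,b)$ and $\N(\cdot,b+3)$. Then I would apply the $s$-relation at shifted $a$, namely at $(a+3,b)$, to land on the window $\N(a,b),\dots,\N(a+3,b)$. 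Chaining these relations should eliminate all $b$-shifts and produce \eqref{Nfrec}.

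\textbf{Main obstacle.} The only delicate step is choosing, for $\N$, the right combination of shifted instances so that all off-$b$ terms cancel exactly. The $s$-relation moves $a$ by $-3$ and $b$ by $+6$, while the $r$-relation moves $b$ by $-3$ per step. One $s$-step and two $r$-steps therefore return to the same $b$, with a net $a$-shift of $+1$, which matches a four-term window. I would first try the combination
\[
s\text{-relation at }(a+3,b),\ \text{followed by two }r\text{-relations starting at }(a+1,b+6)\ \text{and}\ (a,b+3).
\]
I would then verify the resulting coefficients against \eqref{Nfrec}, adjusting which starting points are used if the exponents do not match on the first attempt.
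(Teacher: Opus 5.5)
Your treatment of $\mathcal S$ is correct and matches the paper exactly. Your relations (A) and (B) are \eqref{Scont1}--\eqref{Scont2}, and the four substitutions you list, followed by solving for $\mathcal S(a+1,b)$, give \eqref{Sfrec}. Your two contiguous relations for $\N$ are also correct; they are \eqref{cont1}--\eqref{cont2}.

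The gap is the elimination for $\N$. This is half of the lemma, and you leave it as a search. The combination you propose cannot work. The $s$-relation at $(a+3,b)$ and the $r$-relations at $(a+1,b+6)$ and $(a,b+3)$ involve six distinct values off level $b$. Only $\N(a+3,b+3)$ among them occurs twice. They also involve only two level-$b$ values, $\N(a+2,b)$ and $\N(a+3,b)$. So every nontrivial linear combination of these three relations keeps off-level terms.

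Reading your plan as a chain fails too. With the $r$-relations at $(a,b+6)$ and $(a+2,b+3)$, the value $\N(a+1,b+6)$ occurs only once and cannot cancel. Your count ``one $s$-step and two $r$-steps'' tracks only where the right-hand term of each relation lands. It ignores the other two terms of each relation, which must also cancel. A case check on which levels each relation touches shows that no three instances of the two relations yield a four-term relation on a single level.

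The missing idea is a raising formula that expresses a level-$(b+3)$ value purely through level-$b$ values. The $r$-relation at $(a+1,b)$ gives $q^{a+2}\N(a+3,b-3)=\N(a+1,b)-\N(a+2,b)$. The $s$-relation at $(a+3,b-3)$ reads $\N(a+3,b-3)-\N(a+3,b)=q^{b}\N(a,b+3)$. Together they give
\[
\N(a,b+3)=q^{-b}\Bigl\{q^{-a-2}\bigl(\N(a+1,b)-\N(a+2,b)\bigr)-\N(a+3,b)\Bigr\}.
\]
Now take the $r$-relation at $(a-2,b+3)$, namely $\N(a-2,b+3)-\N(a-1,b+3)=q^{a-1}\N(a,b)$. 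Substitute the raising formula into it, with $a$ replaced by $a-2$ and by $a-1$, and multiply by $q^{a+b+1}$. This gives
\[
q\N(a-1,b)=\bigl(1+q+q^{2a+b}\bigr)\N(a,b)-\bigl(1-q^{a+1}\bigr)\N(a+1,b)-q^{a+1}\N(a+2,b).
\]
Replacing $a$ by $a+1$ gives \eqref{Nfrec}. This is the paper's route, and it uses three $r$-relations and two $s$-relations.
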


\begin{proof}
	We use the contiguous relations
	\begin{align}
		\mathcal S(a,b)-\mathcal S(a+1,b)
		&=q^{a+1}\mathcal S(a+2,b+3),\label{Scont1}\\
		\mathcal S(a,b)-\mathcal S(a,b+3)
		&=q^{b+3}\mathcal S(a+3,b+6),\label{Scont2}\\
		\N(a,b)-\N(a+1,b)
		&=q^{a+1}\N(a+2,b-3),\label{cont1}\\
		\N(a,b)-\N(a,b+3)
		&=q^{b+3}\N(a-3,b+6),\label{cont2}
	\end{align}
	which are proved in \cite[(2.1), (2.2), (7.7), and (7.8)]{Xia};
	see also \cite{ChernLi,Konenkov}.

 We first derive \eqref{Sfrec}. Replacing \(a\) by \(a-2\) in
 \eqref{Scont1} gives
 \begin{equation}
 	\mathcal S(a,b+3)
 	=q^{1-a}\bigl\{\mathcal S(a-2,b)-\mathcal S(a-1,b)\bigr\}.
 	\label{eq:S-b-shift}
 \end{equation}
Replacing $a$  by \(a+1\) and \(a+2\) in \eqref{eq:S-b-shift}, respectively,
 gives
 \begin{align}
 	\mathcal S(a+1,b+3)
 	&=q^{-a}\bigl\{\mathcal S(a-1,b)-\mathcal S(a,b)\bigr\},
 	\label{eq:S-a1}\\
 	\mathcal S(a+2,b+3)
 	&=q^{-a-1}\bigl\{\mathcal S(a,b)-\mathcal S(a+1,b)\bigr\}.
 	\label{eq:S-a2}
 \end{align}
 Applying  \eqref{Scont1} at \((a+1,b+3)\), we have
 \begin{align}\label{h-91}
  \mathcal S(a+3,b+6)=q^{-a-2}\left(\mathcal S(a+1,b+3)-\mathcal S(a+2,b+3)\right)
    .
 \end{align}
 Substituting \eqref{eq:S-a1} and \eqref{eq:S-a2} into  \eqref{h-91} 
 yields
 \begin{equation}
 	\mathcal S(a+3,b+6)
 	=
 	q^{-2a-3}\bigl\{
 	q\mathcal S(a-1,b)
 	-(q+1)\mathcal S(a,b)
 	+\mathcal S(a+1,b)
 	\bigr\}.
 	\label{eq:S-double-shift}
 \end{equation}

 Substituting \eqref{eq:S-b-shift} and
 \eqref{eq:S-double-shift} into \eqref{Scont2}, we obtain
 \begin{align*}
 	&\mathcal S(a,b)
 	-q^{1-a}\bigl\{\mathcal S(a-2,b)-\mathcal S(a-1,b)\bigr\}\\
 	&\qquad
 	=q^{b-2a}\bigl\{
 	q\mathcal S(a-1,b)
 	-(q+1)\mathcal S(a,b)
 	+\mathcal S(a+1,b)
 	\bigr\}.
 \end{align*}
 Solving for \(\mathcal S(a+1,b)\) gives
  \eqref{Sfrec}.

  We next derive \eqref{Nfrec}. Applying \eqref{cont1} at
  \((a+1,b)\) gives
  \begin{equation}
  	\N(a+3,b-3)
  	=q^{-a-2}\bigl\{\N(a+1,b)-\N(a+2,b)\bigr\}.
  	\label{eq:N-first-shift}
  \end{equation}
  Applying \eqref{cont2} at \((a+3,b-3)\), we obtain
  \[
  \N(a+3,b-3)-\N(a+3,b)
  =q^b\N(a,b+3).
  \]
  Substitution of \eqref{eq:N-first-shift} gives
  \begin{equation}
  	\N(a,b+3)
  	=q^{-b}\left\{
  	q^{-a-2}\bigl(\N(a+1,b)-\N(a+2,b)\bigr)
  	-\N(a+3,b)
  	\right\}.
  	\label{Nup}
  \end{equation}

  Replacing \(a\) by \(a-2\) and \(a-1\) in \eqref{Nup}, respectively,
  gives
  \begin{align}
  	\N(a-2,b+3)
  &=q^{-b}\left\{
  	q^{-a}\bigl(\N(a-1,b)-\N(a,b)\bigr)
  	-\N(a+1,b)
  	\right\},
  	\label{eq:N-a-minus-2}
\\
  	\N(a-1,b+3)
  	&=q^{-b}\left\{
  	q^{-a-1}\bigl(\N(a,b)-\N(a+1,b)\bigr)
  	-\N(a+2,b)
  	\right\}.
  	\label{eq:N-a-minus-1}
  \end{align}
  Subtracting \eqref{eq:N-a-minus-1} from
  \eqref{eq:N-a-minus-2} and simplifying, we obtain
  \begin{align}
  	&q^{a+b+1}\left(\N(a-2,b+3)-\N(a-1,b+3)\right)\notag\\
   = & 
  	q\N(a-1,b)-(1+q)\N(a,b)
  	+(1-q^{a+1})\N(a+1,b)
  	+q^{a+1}\N(a+2,b)
  	. \label{eq:N-subtraction}
  \end{align}
  On the other hand, applying \eqref{cont1} at
  \((a-2,b+3)\) gives
  \[
  \N(a-2,b+3)-\N(a-1,b+3)
  =q^{a-1}\N(a,b).
  \]
  Consequently, \eqref{eq:N-subtraction} becomes
  \begin{align*}
  	q\N(a-1,b)
  	&=\bigl(1+q+q^{2a+b}\bigr)\N(a,b) 
  	-\bigl(1-q^{a+1}\bigr)\N(a+1,b)
  	-q^{a+1}\N(a+2,b).
  \end{align*}
  Finally, replacing \(a\) by \(a+1\) in the above identity, we obtain
  \eqref{Nfrec}.
  \end{proof}

\begin{lemma} \label{lem:schur}
For $m\ge0$, define
\begin{align}
\mathsf A_m&:=\sum_{r\ge0}q^{r^2-mr}\qb{m-r}{r}, \label{2-16}\\
\mathsf B_m&:=\sum_{r\ge0}q^{r^2-mr}\qb{m-r-1}{r}. \label{2-17}
\end{align}
Then
\begin{equation}\label{eq:schur}
\sum_{r\ge0}\frac{q^{r^2-mr}}{(q;q)_r}
=G(q)\mathsf A_m+H(q)\mathsf B_m.
\end{equation}
The recurrence extensions are $\mathsf A_{-1}:=0$, $\mathsf B_{-1}:=1$, and 
both sequences $\mathsf A_m$ and $\mathsf B_m$ satisfy 
\begin{equation}\label{eq:hrec}
\mathsf w_m=\mathsf w_{m-1}+q^{1-m}\mathsf w_{m-2}\qquad(m\ge1).
\end{equation}
\end{lemma}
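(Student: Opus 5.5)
The plan is to prove the recurrence \eqref{eq:hrec} for both sequences first, and then obtain \eqref{eq:schur} by induction on $m$, showing that the left-hand side satisfies the same recurrence. Write $L_m:=\sum_{r\ge0}q^{r^2-mr}/(q;q)_r$. For the Gaussian sums, use the Pascal rule
\[
\qb{n}{r}=\qb{n-1}{r}+q^{n-r}\qb{n-1}{r-1}.
\]
Take $n=m-r$ in $\mathsf A_m$. The first piece gives $\sum_r q^{r^2-mr}\qb{m-1-r}{r}$. After the shift $r\mapsto r+1$, the second piece gives
\[
\sum_r q^{(r+1)^2-m(r+1)+m-2r-1}\qb{m-2-r}{r}
=\sum_r q^{r^2-mr}\qb{m-2-r}{r}.
\]
The first piece is not $\mathsf A_{m-1}$ on the nose, because the exponent is $r^2-mr$ rather than $r^2-(m-1)r$. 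So I would instead apply the other Pascal rule,
\[
\qb{n}{r}=q^{r}\qb{n-1}{r}+\qb{n-1}{r-1}.
\]
With $n=m-r$, the first term gives $q^{r^2-(m-1)r}\qb{m-1-r}{r}$, which sums to $\mathsf A_{m-1}$. The second term, after $r\mapsto r+1$, gives
\[
q^{(r+1)^2-m(r+1)}\qb{m-2-r}{r}=q^{1-m}\,q^{r^2-(m-2)r}\qb{m-2-r}{r},
\]
which sums to $q^{1-m}\mathsf A_{m-2}$. The same computation with $n=m-r-1$ handles $\mathsf B_m$. The boundary values are $\mathsf A_0=1$, $\mathsf B_0=0$, $\mathsf A_{-1}=0$, $\mathsf B_{-1}=1$. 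They make \eqref{eq:hrec} hold at $m=1$: one gets $\mathsf A_1=1$ and $\mathsf B_1=1$, consistent with $\mathsf B_1=\qb{0}{0}=1$. For $m\ge2$, the Pascal step is valid termwise, so the only care needed is checking these boundary cases.

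Next, the same recurrence for $L_m$ follows from $1/(q;q)_r=q^r/(q;q)_r+1/(q;q)_{r-1}$. This gives
\[
L_m=\sum_r\frac{q^{r^2-(m-1)r}}{(q;q)_r}+\sum_r\frac{q^{(r+1)^2-m(r+1)}}{(q;q)_r}=L_{m-1}+q^{1-m}L_{m-2},
\]
which holds for all integers $m$. Finally, check the initial values. At $m=0$ we have $L_0=G(q)=G\mathsf A_0+H\mathsf B_0$, and at $m=-1$ we have $L_{-1}=H(q)=G\mathsf A_{-1}+H\mathsf B_{-1}$. Both sides of \eqref{eq:schur} satisfy the same second-order recurrence with the same two initial values, so \eqref{eq:schur} holds for all $m\ge0$. (The recurrence could be run through $m=1$ as a further check.)

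The main obstacle is mild: choosing the correct form of the $q$-Pascal rule so that the exponent shift $r^2-mr\to r^2-(m-1)r$ comes out exactly, and handling the $m=1$ edge case where $\mathsf B_{m-2}=\mathsf B_{-1}$ is a convention rather than a sum. All series converge coefficientwise in $\mathbb C((q))$, since $r\le m/2$ in $\mathsf A_m$ and $\mathsf B_m$, and $r^2-mr\to\infty$ in $L_m$.
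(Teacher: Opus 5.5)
Your proposal is correct and is essentially the paper's proof: the same $q$-Pascal rule $\qb{N}{r}=q^r\qb{N-1}{r}+\qb{N-1}{r-1}$ with the shift $r\mapsto r+1$ gives \eqref{eq:hrec}, the same difference calculation gives $L_m=L_{m-1}+q^{1-m}L_{m-2}$, and your choice of initial values $(L_{-1},L_0)=(H(q),G(q))$ is equivalent to the paper's $(\kappa_0,\kappa_1)=(G(q),G(q)+H(q))$. The only blemish is in the computation you discarded, where the shifted term should carry $q^{r^2-mr-1}$ rather than $q^{r^2-mr}$, and this plays no role in the final argument.
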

\begin{proof}
The Gaussian recurrence
\[
\qb Nr=q^r\qb{N-1}{r}+\qb{N-1}{r-1}\qquad(N\ge1)
\]
and a shift $r\mapsto r+1$ prove~\eqref{eq:hrec} for $m\ge2$.
At the initial indices,
$(\mathsf A_0,\mathsf A_1)=(1,1)$ and
$(\mathsf B_0,\mathsf B_1)=(0,1)$.
For the single sum $\kappa_m$ on the left of~\eqref{eq:schur},
\begin{align*}
\kappa_m-\kappa_{m-1}&=\sum_{r\geq 0} \frac{q^{r^2-mr}(1-q^r)}{(q;q)_r}
=\sum_{r\ge1}\frac{q^{r^2-mr}}{(q;q)_{r-1}}
\nonumber\\
&=\sum_{r\ge0 }\frac{q^{(r+1)^2-m(r+1)}}{(q;q)_{r }}
=q^{1-m}\kappa_{m-2}.
\end{align*}
Here $\kappa_0=G(q)$, and the same difference calculation gives
$\kappa_1=G(q)+H(q)$.
The initial values and recurrence prove~\eqref{eq:schur}.
\end{proof}

For prescribed initial values $\mathsf w_{-1},\mathsf w_0$, let
$(\mathsf w_m)_{m\ge-1}$ be
the unique sequence defined by  \eqref{eq:hrec}, and define
\[
\mathscr S_{\mathsf w}(x):=\sum_{j\ge0}
\frac{q^{3j^2}\mathsf w_{3j}}{(q^3;q^3)_j}x^j,
\qquad
\mathscr T_{\mathsf w}(x):=\sum_{j\ge0}
\frac{q^{3j^2}\mathsf w_{3j-1}}{(q^3;q^3)_j}x^j.
\]

\begin{lemma} \label{lem:schur-transform}
	 One has  
\begin{align}
\mathscr S_{\mathsf w}(x)-\mathscr S_{\mathsf w}(q^3x)
&=x(q^3\mathscr S_{\mathsf w}(q^6x)
 +(q+q^2)\mathscr S_{\mathsf w}(q^3x)
 +q^3\mathscr T_{\mathsf w}(q^3x)+q\mathscr T_{\mathsf w}(x)),\label{eq:ST1}\\
\mathscr T_{\mathsf w}(x)-\mathscr T_{\mathsf w}(q^3x)
&=x(q^3\mathscr S_{\mathsf w}(q^6x)
 +q^2\mathscr S_{\mathsf w}(q^3x)
 +q^3\mathscr T_{\mathsf w}(q^3x)).\label{eq:ST2}
\end{align}
Moreover,
\[
\mathscr S_{\mathsf w}(0)=\mathsf w_0,\qquad
\mathscr T_{\mathsf w}(0)=\mathsf w_{-1},
\]
and these two constants together with \eqref{eq:ST1} and \eqref{eq:ST2}
determine the transforms uniquely as formal power series in $x$.
In particular,
\[
(\mathscr S_{\mathsf A}(0),\mathscr T_{\mathsf A}(0))=(1,0),\qquad
(\mathscr S_{\mathsf B}(0),\mathscr T_{\mathsf B}(0))=(0,1).
\]
\end{lemma}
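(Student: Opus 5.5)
The plan is to compare coefficients of $x^j$ on both sides of \eqref{eq:ST1} and \eqref{eq:ST2}. This reduces each functional equation to a three-term identity among $\mathsf w_{3k-1},\mathsf w_{3k},\mathsf w_{3k+2},\mathsf w_{3k+3}$. I then verify that identity by iterating \eqref{eq:hrec} two or three times.

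\emph{Left-hand sides.} For $j\ge1$, the coefficient of $x^j$ in $\mathscr S_{\mathsf w}(x)-\mathscr S_{\mathsf w}(q^3x)$ is
\[
\frac{q^{3j^2}\mathsf w_{3j}(1-q^{3j})}{(q^3;q^3)_j}=\frac{q^{3j^2}\mathsf w_{3j}}{(q^3;q^3)_{j-1}},
\]
and the constant term vanishes. The same holds for $\mathscr T_{\mathsf w}$ with $\mathsf w_{3j-1}$ in place of $\mathsf w_{3j}$.

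\emph{Right-hand sides.} The factor $x$ means the coefficient of $x^j$ equals the coefficient of $x^{k}$, $k=j-1$, in the bracket. Replacing $x$ by $q^3x$ or $q^6x$ multiplies the $k$-th coefficient by $q^{3k}$ or $q^{6k}$. Using $3j^2=3k^2+6k+3$ and cancelling the common factor $q^{3k^2}/(q^3;q^3)_k$, the two identities become, for all $k\ge0$,
\begin{align*}
q^{6k+3}\mathsf w_{3k+3}&=q^{6k+3}\mathsf w_{3k}+(q^{3k+1}+q^{3k+2})\mathsf w_{3k}+(q^{3k+3}+q)\mathsf w_{3k-1},\\
q^{6k+3}\mathsf w_{3k+2}&=q^{6k+3}\mathsf w_{3k}+q^{3k+2}\mathsf w_{3k}+q^{3k+3}\mathsf w_{3k-1}.
\end{align*}

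\emph{Verification from \eqref{eq:hrec}.} Applying \eqref{eq:hrec} at $m=3k+1$ and $m=3k+2$ (both $\ge1$, so valid for every $k\ge0$, including the extended index $\mathsf w_{-1}$ when $k=0$) gives
\[
\mathsf w_{3k+2}=\mathsf w_{3k}+q^{-3k}\mathsf w_{3k-1}+q^{-3k-1}\mathsf w_{3k}.
\]
Multiplying by $q^{6k+3}$ yields exactly the second identity, which proves \eqref{eq:ST2}. For the first identity, apply \eqref{eq:hrec} once more at $m=3k+3$:
\[
\mathsf w_{3k+3}=\mathsf w_{3k+2}+q^{-3k-2}\mathsf w_{3k+1},
\]
and substitute both the formula for $\mathsf w_{3k+2}$ and $\mathsf w_{3k+1}=\mathsf w_{3k}+q^{-3k}\mathsf w_{3k-1}$. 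After multiplying by $q^{6k+3}$, the terms in $\mathsf w_{3k}$ are $q^{6k+3}+q^{3k+2}+q^{3k+1}$ and those in $\mathsf w_{3k-1}$ are $q^{3k+3}+q$, which matches. This bookkeeping is the only potentially error-prone step, and it is routine.

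\emph{Constants and uniqueness.} The values $\mathscr S_{\mathsf w}(0)=\mathsf w_0$ and $\mathscr T_{\mathsf w}(0)=\mathsf w_{-1}$ are read off from the $j=0$ terms. For uniqueness, suppose formal series $S(x)=\sum s_jx^j$ and $T(x)=\sum t_jx^j$ satisfy \eqref{eq:ST1}--\eqref{eq:ST2} with prescribed $s_0,t_0$. The $x^j$ coefficients give $(1-q^{3j})s_j$ and $(1-q^{3j})t_j$ as explicit expressions in $s_{j-1}$ and $t_{j-1}$. Since $1-q^{3j}$ is invertible in $\mathbb C((q))$ for $j\ge1$, induction on $j$ determines all coefficients. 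Finally, $\mathsf A_{-1}=0$, $\mathsf A_0=1$, $\mathsf B_{-1}=1$, $\mathsf B_0=0$ from Lemma~\ref{lem:schur} give the stated initial pairs.
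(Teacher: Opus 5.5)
Your proposal is correct and is essentially the paper's proof. Both arguments iterate \eqref{eq:hrec} to write $\mathsf w_{3k+2}$ and $\mathsf w_{3k+3}$ in terms of $\mathsf w_{3k}$ and $\mathsf w_{3k-1}$, compare coefficients after using $(q^3;q^3)_j=(1-q^{3j})(q^3;q^3)_{j-1}$, and get uniqueness from the invertibility of $1-q^{3n}$ in the degree-by-degree induction.
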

\begin{proof}
	Successive applications of \eqref{eq:hrec} give, for $j\ge0$,
	\begin{align*}
		\mathsf w_{3j+2}
		&=(1+q^{-3j-1})\mathsf w_{3j}
		+q^{-3j}\mathsf w_{3j-1},\\
		\mathsf w_{3j+3}
		&=(1+q^{-3j-1}+q^{-3j-2})\mathsf w_{3j}
		+(q^{-3j}+q^{-6j-2})\mathsf w_{3j-1}.
	\end{align*}
	Using $(q^3;q^3)_j=(1-q^{3j})(q^3;q^3)_{j-1}$
	and shifting the summation index, we obtain
	\begin{align*}
		 \mathscr S_{\mathsf w}(x)-\mathscr S_{\mathsf w}(q^3x)&=\sum_{j\ge1}
		\frac{q^{3j^2}\mathsf w_{3j}x^j}{(q^3;q^3)_{j-1}}
		\nonumber\\
		&=x\sum_{j\ge0}
		\frac{q^{3(j+1)^2}\mathsf w_{3j+3}x^j}{(q^3;q^3)_j}\\
		& =x\sum_{j\ge0}\frac{q^{3j^2}x^j}{(q^3;q^3)_j}
	 \left(
		(q^{6j+3}+(q+q^2)q^{3j})\mathsf w_{3j}
		+(q^{3j+3}+q)\mathsf w_{3j-1}
	 \right)\\
		& =x\left( 
		q^3\mathscr S_{\mathsf w}(q^6x)
		+(q+q^2)\mathscr S_{\mathsf w}(q^3x)
		+q^3\mathscr T_{\mathsf w}(q^3x)
		+q\mathscr T_{\mathsf w}(x)
		\right).
	\end{align*}
	This proves \eqref{eq:ST1}. Similarly,
	\[
	\mathscr T_{\mathsf w}(x)-\mathscr T_{\mathsf w}(q^3x)
	=x\sum_{j\ge0}
	\frac{q^{3(j+1)^2}\mathsf w_{3j+2}x^j}{(q^3;q^3)_j},
	\]
	and substitution of the first recurrence gives \eqref{eq:ST2}.
	
	The constant terms follow directly from the definitions.
	For $n\ge1$, comparison of coefficients of $x^n$ in
	\eqref{eq:ST1}--\eqref{eq:ST2} determines both coefficients
	of degree $n$ from those of degree $n-1$, since
	$1-q^{3n}$ is invertible in $\mathbb C((q))$.
	Thus the prescribed constant terms determine the transforms
	uniquely. The final assertion follows from the initial
	values of $\mathsf A_m$ and $\mathsf B_m$.
\end{proof}

\begin{lemma}[{\cite[Lemma 2.4]{Xia}}]\label{lem:unique}
	Let $\mathfrak D_n$ be matrices with entries in $\mathbb C[[q]]$.
	Suppose $\boldsymbol f_n=\mathfrak D_n\boldsymbol f_{n+1}$ and
	$\boldsymbol g_n=\mathfrak D_n\boldsymbol g_{n+1}$, and
	$\ord_q(\boldsymbol f_n-\boldsymbol g_n)\to+\infty$.
	Then $\boldsymbol f_n=\boldsymbol g_n$ for every $n$.
	The statement also holds in a fixed Laurent shift of the vector space.
\end{lemma}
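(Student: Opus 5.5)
The plan is to pass to the difference $\boldsymbol h_n:=\boldsymbol f_n-\boldsymbol g_n$ and use the fact that multiplication by a matrix with entries in $\mathbb C[[q]]$ cannot lower $q$-adic order. By linearity, subtracting the two recurrences gives $\boldsymbol h_n=\mathfrak D_n\boldsymbol h_{n+1}$ for every $n$. Iterating this $k$ times, I get
\[
\boldsymbol h_n=\mathfrak D_n\mathfrak D_{n+1}\cdots\mathfrak D_{n+k-1}\,\boldsymbol h_{n+k}\qquad(k\ge1).
\]
The product $\mathfrak M_{n,k}:=\mathfrak D_n\cdots\mathfrak D_{n+k-1}$ is again a matrix with entries in $\mathbb C[[q]]$, since $\mathbb C[[q]]$ is a ring.

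The key step is the order estimate. If $\boldsymbol v=O(q^L)$ componentwise and $\mathfrak M$ has entries in $\mathbb C[[q]]$, then each component of $\mathfrak M\boldsymbol v$ is a finite sum of products $m_{ij}v_j$. Each such product lies in $q^L\mathbb C[[q]]$, because $\ord_q(m_{ij}v_j)=\ord_q m_{ij}+\ord_q v_j\ge L$. Hence $\mathfrak M\boldsymbol v=O(q^L)$. Applying this to $\mathfrak M_{n,k}$ gives $\ord_q\boldsymbol h_n\ge\ord_q\boldsymbol h_{n+k}$ for every $k\ge1$, where $\ord_q$ of a vector means the minimum over its components.

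Now fix $n$ and let $k\to\infty$. The hypothesis $\ord_q\boldsymbol h_{n+k}\to+\infty$ forces $\ord_q\boldsymbol h_n=+\infty$, that is, $\boldsymbol h_n=0$, which gives $\boldsymbol f_n=\boldsymbol g_n$. For the final sentence, suppose all vectors lie in $q^{-N}\mathbb C[[q]]^d$ for a fixed $N$. Multiplying everything by $q^N$ preserves both recurrences (the $\mathfrak D_n$ are unchanged) and the divergence of orders, so the power-series case applies. Alternatively, the order estimate above holds verbatim for Laurent vectors with finitely many negative exponents.

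I do not expect a real obstacle here. The only point needing care is that the entries of $\mathfrak D_n$ must be genuine power series with no negative powers of $q$. Otherwise the product over $k$ factors could lower the order by an amount growing with $k$, and the argument would fail. I would state this explicitly, since the later applications presumably use recurrences of this kind, such as \eqref{Nfrec}, rearranged so that the coefficient matrices lie in $\mathbb C[[q]]$.
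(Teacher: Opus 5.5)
Your proof is correct and complete. You subtract the two recurrences to get one for $\boldsymbol h_n=\boldsymbol f_n-\boldsymbol g_n$, iterate to obtain $\ord_q\boldsymbol h_n\ge\ord_q\boldsymbol h_{n+k}$ because matrices over $\mathbb C[[q]]$ cannot lower $q$-adic order, let $k\to\infty$, and handle the Laurent-shift case by multiplying through by $q^N$. This is the standard argument for the cited \cite[Lemma 2.4]{Xia}, which the paper quotes without giving a proof. In the paper the lemma is applied only with $\mathfrak D_n=\mathbf D(q^{3n})$ from \eqref{eq:D}, whose entries are polynomials in $q$, and with a difference that is $O(q^{3n-1})$, so your caveat about genuine power-series entries is met directly there, not through a rearranged form of \eqref{Nfrec}.
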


We also require the  definition of the divided difference in this paper. For pairwise distinct nodes $z_0,\ldots,z_n$, define the divided difference
\begin{equation}\label{eq:divided-difference}
	f[z_0,\ldots,z_n]:=\sum_{j=0}^n
	\frac{f(z_j)}{\prod_{k\ne j}(z_j-z_k)},
\end{equation}
where $f(z)$ is a function in $z$.
 
In particular, 
\begin{align*}
	f[z_0]&=f(z_0),\\
	f[z_0,z_1]&=\frac{f(z_0)-f(z_1)}{z_0-z_1},
	\\
	f[z_0,z_1,z_2]&=
	\frac{f(z_0)}{(z_0-z_1)(z_0-z_2)}+
	\frac{f(z_1)}{(z_1-z_0)(z_1-z_2)}+
	\frac{f(z_2)}{(z_2-z_0)(z_2-z_1)}.
\end{align*}
We always evaluate the full product at a node before taking a divided
difference. A divided difference of an infinite product is not a
product of divided differences of its factors.
For general background on divided differences, see de Boor~\cite{deBoor}.

\section{The six symmetric reflection identities}\label{sec:symmetric}

The aim of this section is to 
  present a proof of  Theorem~\ref{thm:main}.
The common construction is given first; the three pairs are then
resolved in the order $(F_1,F_2)$, $(F_3,F_4)$, and $(F_5,F_6)$.

We first prove several lemmas.

\begin{lemma} \label{lem:common-vector}
Define
\begin{equation*}
\mathbf U_n:=
\begin{pmatrix}\mathscr S_{\mathsf A}(q^{3n})\\\mathscr S_{\mathsf A}(q^{3n+3})+\mathscr T_{\mathsf A}(q^{3n})\\\mathscr T_{\mathsf A}(q^{3n})\end{pmatrix},
\qquad
\mathbf V_n:=
\begin{pmatrix}\mathscr S_{\mathsf B}(q^{3n})\\\mathscr S_{\mathsf B}(q^{3n+3})+\mathscr T_{\mathsf B}(q^{3n})\\\mathscr T_{\mathsf B}(q^{3n})\end{pmatrix}.
\end{equation*}
Then
\begin{equation}\label{eq:Drec}
\mathbf U_n=\mathbf D(q^{3n})\mathbf U_{n+1},\qquad
\mathbf V_n=\mathbf D(q^{3n})\mathbf V_{n+1},
\end{equation}
where 
$\mathbf D(x)$ is defined by 
\begin{equation}\label{eq:D}
	\mathbf D(x):=
	\begin{pmatrix}
		1+(q+q^2)x+q^3x^2&q^3x(1+qx)&qx\\
		1+q^2x&q^3x&1\\
		q^2x&q^3x&1
	\end{pmatrix}.
\end{equation}
The boundary and initial vectors are
\begin{equation}\label{eq:UVboundary}
\mathbf U_n=(1,1,0)^{\mathsf T}+O(q^{3n}),\qquad
\mathbf V_n=(0,1,1)^{\mathsf T}+O(q^{3n}),
\end{equation}
and
\begin{equation}\label{eq:UVzero}
\mathbf U_0=(F_2,F_4,q^2F_6)^{\mathsf T},\qquad
\mathbf V_0=(qF_1,F_3,F_5)^{\mathsf T}.
\end{equation}
\end{lemma}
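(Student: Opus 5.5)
The lemma has three parts, and I would prove them in order.

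\emph{The recurrence \eqref{eq:Drec}.} This should follow from Lemma~\ref{lem:schur-transform}, since it applies to any solution of \eqref{eq:hrec}. Write $S_k:=\mathscr S_{\mathsf w}(q^{3k})$ and $T_k:=\mathscr T_{\mathsf w}(q^{3k})$. At $x=q^{3n}$, \eqref{eq:ST2} gives
\[
T_n=T_{n+1}+x\bigl(q^3S_{n+2}+q^2S_{n+1}+q^3T_{n+1}\bigr).
\]
At the same point, \eqref{eq:ST1} expresses $S_n$ through $S_{n+1}$, $S_{n+2}$, $T_{n+1}$ and $T_n$. Substituting the formula for $T_n$ into it removes $T_n$.

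Next, rewrite both formulas in the coordinates $u_{n+1}=S_{n+1}$, $v_{n+1}=S_{n+2}+T_{n+1}$, $t_{n+1}=T_{n+1}$. Since $S_{n+2}=v_{n+1}-t_{n+1}$, the $T_n$ formula becomes
\[
T_n=q^2x\,u+q^3x\,v+(1-q^3x+q^3x)\,t=q^2x\,u+q^3x\,v+t,
\]
which is the third row of $\mathbf D(x)$.

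The second row is $v_n=S_{n+1}+T_n=(1+q^2x)u+q^3x\,v+t$, which matches $\mathbf D(x)$.

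For the first row, \eqref{eq:ST1} gives
\[
S_n=S_{n+1}+x\bigl(q^3S_{n+2}+(q+q^2)S_{n+1}+q^3T_{n+1}+qT_n\bigr).
\]
Inserting $T_n$ and $S_{n+2}=v-t$ yields
\[
S_n=\bigl(1+(q+q^2)x+q^3x^2\bigr)u+q^3x(1+qx)v+qx\,t,
\]
because the $t$-coefficient is $x(-q^3+q^3+q)=qx$. This is a routine check, and since nothing depends on which $\mathsf w$ is used, it holds for both $\mathsf A$ and $\mathsf B$.

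\emph{The boundary behaviour \eqref{eq:UVboundary}.} Since $\mathscr S_{\mathsf w}(x)-\mathsf w_0$ and $\mathscr T_{\mathsf w}(x)-\mathsf w_{-1}$ are divisible by $x$, we would like $\mathscr S_{\mathsf w}(q^{3n})=\mathsf w_0+O(q^{3n})$. For this the coefficients $q^{3j^2}\mathsf w_{3j}$ must lie in $\mathbb C[[q]]$, or at least have order bounded below uniformly after multiplication by $q^{3nj}$.

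The obstacle is that $\mathsf w_m$ has negative powers: $\mathsf A_m$ contains $q^{r^2-mr}$. The minimum of $r^2-3jr$ over $r$ with $2r\le 3j$ is about $-9j^2/4$, so $q^{3j^2}\mathsf w_{3j}$ has order at least $3j^2/4\ge0$, and the same bound holds for $\mathsf w_{3j-1}$. Hence the $j$-th term is $O(q^{3nj})$, and the boundary values are $(\mathsf w_0,\mathsf w_0+\mathsf w_{-1},\mathsf w_{-1})$. These equal $(1,1,0)$ for $\mathsf A$ and $(0,1,1)$ for $\mathsf B$, using $\mathsf B_{-1}=1$ and $\mathsf A_{-1}=0$. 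This order estimate needs the most care, especially for the $\mathsf B$ extension at small indices.

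\emph{The initial values \eqref{eq:UVzero}.} At $n=0$ we have $\mathscr S_{\mathsf A}(1)=\sum_s q^{3s^2}\mathsf A_{3s}/(q^3;q^3)_s$. Expanding $\mathsf A_{3s}$ by \eqref{2-16} gives exactly $\sum q^{r^2-3rs+3s^2}\qb{3s-r}{r}/(q^3;q^3)_s=F_2$.

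The other entries are matched in the same way, with each one requiring either an index shift $s\mapsto s+1$ (for the $\mathscr S(q^3)$ terms) or the use of $\mathsf w_{3s-1}$ together with the Gaussian range.

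\begin{itemize}
\item $\mathscr T_{\mathsf A}(1)$ uses $\qb{3s-r-1}{r}$ with exponent $\mathcal Q_-$; it should combine with $q^{\dots}$ offsets to give the $F_6$ entry, namely $q^2F_6=\sum q^{\mathcal Q_-+r}\qb{3s-r-1}{r}/(q^3;q^3)_s$.
\item $\mathscr S_{\mathsf A}(q^3)+\mathscr T_{\mathsf A}(1)$ should be rewritten via the Gaussian recurrence as the $\qb{3s-r+1}{r}$ sum $F_4$.
\end{itemize}

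I would check each match by substituting $s\mapsto s+1$, completing $\mathcal Q_-(r,s+1)=\mathcal Q_-(r,s)-3r+6s+3$, and handling the $s=0$ boundary terms. These boundary terms come from $\mathsf B_{-1}=1$, and they are what produce the additive constant $1$ in $F_5$ and the shifted exponents in $F_1$ and $F_6$.
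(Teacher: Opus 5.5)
Your proposal is correct and follows the paper's proof essentially step by step. You use the same substitution of \eqref{eq:ST2} into \eqref{eq:ST1} in the coordinates $\bigl(\mathscr S_{\mathsf w}(x),\mathscr S_{\mathsf w}(q^3x)+\mathscr T_{\mathsf w}(x),\mathscr T_{\mathsf w}(x)\bigr)$, the same completed-square bound $3j^2+3nj+r^2-3jr=3nj+(r-\tfrac{3j}{2})^2+\tfrac{3j^2}{4}$ for \eqref{eq:UVboundary}, and direct substitution of \eqref{2-16}--\eqref{2-17} for \eqref{eq:UVzero}.

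Two small corrections apply to your sketch of the initial values. First, no shift $s\mapsto s+1$ is needed: the paper combines $\mathscr S_{\mathsf w}(q^3)+\mathscr T_{\mathsf w}(1)$ at the same $s$ via $\mathsf w_{3s+1}=\mathsf w_{3s}+q^{-3s}\mathsf w_{3s-1}$, which gives $\sum_{s}q^{3s^2+3s}\mathsf w_{3s+1}/(q^3;q^3)_s$. Second, the factors $q^{-1},q^{-2}$ in $F_1,F_6$ are only normalizations absorbed by the entries $qF_1,q^2F_6$; they are not boundary effects of $\mathsf B_{-1}=1$, whose separated $s=0$ term only supplies the constant $1$ in $F_5$.
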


\begin{proof}
	We first establish the matrix recurrence. Fix
	$\mathsf w=\mathsf A$ or $\mathsf B$, and put $x:=q^{3n}$. Write
	\[
	\begin{aligned}
		\mathsf U&:=\mathscr S_{\mathsf w}(x),&
		\mathsf V&:=\mathscr S_{\mathsf w}(q^3x)
		+\mathscr T_{\mathsf w}(x),&
		\mathsf W&:=\mathscr T_{\mathsf w}(x),\\
		\mathsf U'&:=\mathscr S_{\mathsf w}(q^3x),&
		\mathsf V'&:=\mathscr S_{\mathsf w}(q^6x)
		+\mathscr T_{\mathsf w}(q^3x),&
		\mathsf W'&:=\mathscr T_{\mathsf w}(q^3x).
	\end{aligned}
	\]
	Here the primes indicate the components at index $n+1$, not
	differentiation. By \eqref{eq:ST2},
	\begin{align}\label{3-5}
	\mathsf W=q^2x\mathsf U'+q^3x\mathsf V'+\mathsf W'.
	\end{align}
	Since $\mathsf V=\mathsf U'+\mathsf W$, this also gives
	\begin{align}\label{3-6}
	\mathsf V=(1+q^2x)\mathsf U'+q^3x\mathsf V'+\mathsf W'.
	\end{align}

	Next, \eqref{eq:ST1} yields
	\[
	\begin{aligned}
		\mathsf U-\mathsf U'
		&=(q+q^2)x\mathsf U'+q^3x\mathsf V'+qx\mathsf W.
	\end{aligned}
	\]
	Substituting \eqref{3-5}
	 into the above identity and simplifying, we obtain
	\begin{align}
		\mathsf U 
		&=\bigl(1+(q+q^2)x+q^3x^2\bigr)\mathsf U'
		+q^3x(1+qx)\mathsf V'
		+qx\mathsf W'.\label{3-7}
	\end{align}
Combining \eqref{3-5}--\eqref{3-7} yields 
	\[
	\begin{pmatrix}
		\mathsf U\\ \mathsf V\\ \mathsf W
	\end{pmatrix}
	=
	\mathbf D(x)
	\begin{pmatrix}
		\mathsf U'\\ \mathsf V'\\ \mathsf W'
	\end{pmatrix}.
	\]
	Taking $\mathsf w=\mathsf A$ and $\mathsf w=\mathsf B$ proves
	\eqref{eq:Drec} with the matrix in \eqref{eq:D}.

	We next verify the boundary values. By definition,
	\[
	\begin{aligned}
		\mathscr S_{\mathsf w}(q^{3n})
		&=\mathsf w_0+
		\sum_{j\ge1}
		\frac{q^{3j^2+3nj}\mathsf w_{3j}}{(q^3;q^3)_j},\\
		\mathscr T_{\mathsf w}(q^{3n})
		&=\mathsf w_{-1}+
		\sum_{j\ge1}
		\frac{q^{3j^2+3nj}\mathsf w_{3j-1}}{(q^3;q^3)_j}.
	\end{aligned}
	\]
	For $\mathsf w=\mathsf A,\mathsf B$, substitution of
	 the finite sums defining $\mathsf A_m$ and $\mathsf B_m$
	 in \eqref{2-16} and \eqref{2-17}   shows that the exponents in the first remainder
	have the form
	\[
	3j^2+3nj+r^2-3jr
	=
	3nj+\left(r-\frac{3j}{2}\right)^2+\frac{3j^2}{4}.
	\]
	The corresponding exponents in the second remainder are
	\[
	3j^2+3nj+r^2-(3j-1)r
	=
	3nj+\left(r-\frac{3j}{2}\right)^2
	+\frac{3j^2}{4}+r.
	\]
	The Gaussian coefficients and $(q^3;q^3)_j^{-1}$ contain only
	nonnegative powers of $q$. Hence, for $n\ge0$, both remainders
	are $O(q^{3n})$ coefficientwise. It follows that
	\begin{align}\label{3-8}
	\begin{pmatrix}
		\mathscr S_{\mathsf w}(q^{3n})\\
		\mathscr S_{\mathsf w}(q^{3n+3})
		+\mathscr T_{\mathsf w}(q^{3n})\\
		\mathscr T_{\mathsf w}(q^{3n})
	\end{pmatrix}
	=
	\begin{pmatrix}
		\mathsf w_0\\
		\mathsf w_0+\mathsf w_{-1}\\
		\mathsf w_{-1}
	\end{pmatrix}
	+O(q^{3n}).
	\end{align}
	Using \eqref{3-8} and 
	\[
	(\mathsf A_{-1},\mathsf A_0)=(0,1),
	\qquad
	(\mathsf B_{-1},\mathsf B_0)=(1,0),
	\]
	we obtain \eqref{eq:UVboundary}.

	It remains to prove \eqref{eq:UVzero}.
	Substitution of the definitions of $\mathsf A_{3s}$ and
	$\mathsf B_{3s}$ gives
	\begin{align}
		\mathscr S_{\mathsf A}(1)
		&=\sum_{s\ge0}
		\frac{q^{3s^2}\mathsf A_{3s}}{(q^3;q^3)_s} =\sum_{s\ge0}
		\frac{q^{3s^2} }{(q^3;q^3)_s}\sum_{r\geq 0} q^{r^2-3sr } \qb{3s-r}{r} 
		\nonumber\\
		&=\sum_{r,s\ge0}
		\frac{q^{\mathcal Q_-(r,s)}}{(q^3;q^3)_s}
		\qb{3s-r}{r}
		=F_2,\label{h-1}\\
		\mathscr S_{\mathsf B}(1)
		&=\sum_{s\ge0}
		\frac{q^{3s^2}\mathsf B_{3s}}{(q^3;q^3)_s} =\sum_{s\ge0}
		\frac{q^{3s^2} }{(q^3;q^3)_s}\sum_{r\geq 0} q^{r^2-3sr } \qb{3s-1-r}{r} 
		\nonumber\\
		&=\sum_{r,s\ge0}
		\frac{q^{\mathcal Q_-(r,s)}}{(q^3;q^3)_s}
		\qb{3s-r-1}{r}
		=qF_1. \label{h-2}
	\end{align}

	For the second components, applying  \eqref{eq:hrec} with
	$m=3s+1$ gives 
\[	 
	\mathsf w_{3s+1}
	=\mathsf w_{3s}+q^{-3s}\mathsf w_{3s-1}.
\]
	Therefore,
	\begin{align}
		\mathscr S_{\mathsf w}(q^3)+\mathscr T_{\mathsf w}(1)
		&=\sum_{s\ge0}
		\frac{q^{3s^2}
			\bigl(q^{3s}\mathsf w_{3s}+\mathsf w_{3s-1}\bigr)}
		{(q^3;q^3)_s}
		=\sum_{s\ge0}
		\frac{q^{3s^2+3s}\mathsf w_{3s+1}}{(q^3;q^3)_s}.  \label{h-7}
	\end{align}
	Expanding $\mathsf A_{3s+1}$ and $\mathsf B_{3s+1}$ now yields
	\begin{align}
		\mathscr S_{\mathsf A}(q^3)+\mathscr T_{\mathsf A}(1)
		&=\sum_{s\ge0}
		\frac{q^{3s^2+3s}\mathsf A_{3s+1}}{(q^3;q^3)_s}=\sum_{s\ge0}
		\frac{q^{3s^2+3s}}{(q^3;q^3)_s} \sum_{r\geq 0}
		 q^{r^2-(3s+1)r }\qb{3s+1-r}{r} \nonumber\\
		 &=\sum_{r,s\ge0}
		\frac{q^{\mathcal Q_-(r,s)-r+3s}}{(q^3;q^3)_s}
		\qb{3s+1-r}{r}
		=F_4, \label{h-3}\\
		\mathscr S_{\mathsf B}(q^3)+\mathscr T_{\mathsf B}(1)
		&=\sum_{s\ge0}
		\frac{q^{3s^2+3s}\mathsf B_{3s+1}}{(q^3;q^3)_s}=\sum_{s\ge0}
		\frac{q^{3s^2+3s}}{(q^3;q^3)_s} \sum_{r\geq 0}
		q^{r^2-(3s+1)r }\qb{3s-r}{r} \nonumber\\
		&=\sum_{r,s\ge0}
		\frac{q^{\mathcal Q_-(r,s)-r+3s}}{(q^3;q^3)_s}
		\qb{3s-r}{r}
		=F_3. \label{h-4}
	\end{align}

	Finally,  
	the finite sums defining $\mathsf A_m$ and $\mathsf B_m$
	in \eqref{2-16} and \eqref{2-17}   apply only when $m\ge0$, so we separate the
	$s=0$ terms in the third components:
	\begin{align}
		\mathscr T_{\mathsf A}(1)
		&=\mathsf A_{-1}
		+\sum_{s\ge1}
		\frac{q^{3s^2}\mathsf A_{3s-1}}{(q^3;q^3)_s}= 
		 \sum_{s\ge1}
		\frac{q^{3s^2}}{(q^3;q^3)_s}\sum_{r\geq 0} q^{r^2-(3s-1)r}\qb{3s-1-r}{r}
		\nonumber \\
		&=\sum_{\substack{r\ge0\\s\ge1}}
		\frac{q^{\mathcal Q_-(r,s)+r}}{(q^3;q^3)_s}
		\qb{3s-r-1}{r}
		=q^2F_6, \label{h-5}\\
		\mathscr T_{\mathsf B}(1)
		&=\mathsf B_{-1}
		+\sum_{s\ge1}
		\frac{q^{3s^2}\mathsf B_{3s-1}}{(q^3;q^3)_s}=1
		+\sum_{s\ge1}
		\frac{q^{3s^2} }{(q^3;q^3)_s}\sum_{r\geq 0} q^{r^2-(3s-1)r}\qb{3s-2-r}{r}
		\nonumber \\
		&=1+\sum_{\substack{r\ge0\\s\ge1}}
		\frac{q^{\mathcal Q_-(r,s)+r}}{(q^3;q^3)_s}
		\qb{3s-r-2}{r}
		=F_5.\label{h-6}
	\end{align}
	Combining \eqref{h-1}, \eqref{h-2} and  \eqref{h-3}--\eqref{h-6}  proves
  \eqref{eq:UVzero}.
\end{proof}

\begin{proposition}\label{prop:schur}
The six reflected sums satisfy
\begin{align}
qH(q)F_1+G(q)F_2&=\N(0,0),\label{eq:sumSchur-1}\\
H(q)F_3+G(q)F_4&=\N(-1,3), \label{eq:sumSchur-2}\\
H(q)F_5+q^2G(q)F_6&=\N(1,0).\label{eq:sumSchur-3}
\end{align}
\end{proposition}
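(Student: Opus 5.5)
Each of the three identities will follow by expanding $\N(a,b)$ as an outer sum over $s$ and applying Lemma~\ref{lem:schur} to the inner sum over $r$. By \eqref{Ndef},
\[
\N(a,b)=\sum_{s\ge0}\frac{q^{3s^2+bs}}{(q^3;q^3)_s}\sum_{r\ge0}\frac{q^{r^2-(3s-a)r}}{(q;q)_r},
\]
so the inner sum is $\kappa_m$ with $m=3s-a$. This requires $m\ge0$ for Lemma~\ref{lem:schur} to apply directly; otherwise I will use the recurrence extension. By \eqref{eq:schur}, $\kappa_m=G(q)\mathsf A_m+H(q)\mathsf B_m$. Hence
\[
\N(a,b)=G(q)\sum_{s\ge0}\frac{q^{3s^2+bs}\mathsf A_{3s-a}}{(q^3;q^3)_s}+H(q)\sum_{s\ge0}\frac{q^{3s^2+bs}\mathsf B_{3s-a}}{(q^3;q^3)_s},
\]
and in each case I will match these two transforms with the components computed in Lemma~\ref{lem:common-vector}.

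\textbf{Case $(a,b)=(0,0)$.} Here $m=3s$, and the two sums are $\mathscr S_{\mathsf A}(1)=F_2$ and $\mathscr S_{\mathsf B}(1)=qF_1$ by \eqref{h-1}--\eqref{h-2}. This gives \eqref{eq:sumSchur-1} immediately.

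\textbf{Case $(a,b)=(-1,3)$.} Here $m=3s+1$ and the weight is $q^{3s^2+3s}$. These are exactly the sums in \eqref{h-7}, which equal $F_4$ and $F_3$ by \eqref{h-3}--\eqref{h-4}. This proves \eqref{eq:sumSchur-2}.

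\textbf{Case $(a,b)=(1,0)$.} Here $m=3s-1$, which equals $-1$ at $s=0$. This is the only delicate point: the $s=0$ term on the left is $\kappa_{-1}=\sum_r q^{r^2+r}/(q;q)_r=H(q)$. I must check that this agrees with the extended values $G\mathsf A_{-1}+H\mathsf B_{-1}=H$, using $\mathsf A_{-1}=0$ and $\mathsf B_{-1}=1$. This check is consistent with the difference relation $\kappa_1-\kappa_0=q^{0}\kappa_{-1}$ used in the proof of Lemma~\ref{lem:schur}, since $\kappa_1=G+H$ and $\kappa_0=G$. Granting this, the two sums are $\mathscr T_{\mathsf A}(1)=q^2F_6$ and $\mathscr T_{\mathsf B}(1)=F_5$ by \eqref{h-5}--\eqref{h-6}, which yields \eqref{eq:sumSchur-3}.

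Interchanging the order of summation is justified coefficientwise by the positivity of $\mathcal Q_-$, as remarked in Section~\ref{sec:prelim}. The main obstacle is the bookkeeping of the $m=-1$ boundary term in the third identity, and the check above disposes of it.
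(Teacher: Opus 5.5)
Your proposal is correct and follows the paper's proof essentially step for step: you expand $\N(a,b)$ with the Schur decomposition of the inner sum, then identify the resulting transforms through \eqref{h-1}--\eqref{h-6} and \eqref{h-7}. The only cosmetic difference is in the case $(a,b)=(1,0)$, where the paper splits off the $s=0$ term $H(q)$ and writes $\mathscr T_{\mathsf B}(1)-1$. You instead check directly that the extended values $\mathsf A_{-1}=0$, $\mathsf B_{-1}=1$ reproduce $\kappa_{-1}=H(q)$, which is the same verification.
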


\begin{proof}
 By \eqref{Ndef} and \eqref{eq:schur},
	\begin{align}
	\mathfrak N(a,b)
	&=\sum_{s\ge0}\frac{q^{3s^2+bs}}{(q^3;q^3)_s}
	\sum_{r\ge0}\frac{q^{r^2-(3s-a)r}}{(q;q)_r}
	\label{h-9}\\
	&=\sum_{s\ge0}\frac{q^{3s^2+bs}}{(q^3;q^3)_s}
 \left(G(q)\mathsf A_{3s-a}+H(q)\mathsf B_{3s-a} \right),
	\qquad(a\le0).\label{h-8}
	\end{align}
Taking  $(a,b)=(0,0)$ in \eqref{h-8}  gives 
	\begin{align}
		\mathfrak N(0,0)
		&=\sum_{s\ge0}\frac{q^{3s^2}}{(q^3;q^3)_s}
		\bigl(G(q)\mathsf A_{3s}+H(q)\mathsf B_{3s}\bigr) \nonumber\\
		&=G(q)\mathscr S_{\mathsf A}(1)
		+H(q)\mathscr S_{\mathsf B}(1).\label{vv-2}
	\end{align}
	By  \eqref{h-1} and \eqref{h-2},
	\begin{align}
	\mathscr S_{\mathsf A}(1)=F_2,
	\qquad
	\mathscr S_{\mathsf B}(1)=qF_1. \label{v-3}
\end{align}
	Identity \eqref{eq:sumSchur-1} follows from \eqref{vv-2} and \eqref{v-3}.

Setting  $(a,b)=(-1,3)$ in \eqref{h-8} yields 
	\begin{align}
		\mathfrak N(-1,3)
		&=\sum_{s\ge0}
		\frac{q^{3s^2+3s}}{(q^3;q^3)_s}
		\bigl(G(q)\mathsf A_{3s+1}
		+H(q)\mathsf B_{3s+1}\bigr) \nonumber\\
		&=G(q)\sum_{s\ge0}
		\frac{q^{3s^2+3s} \mathsf A_{3s+1} }{(q^3;q^3)_s} 
		+H(q) \sum_{s\ge0}
		\frac{q^{3s^2+3s} \mathsf B_{3s+1} }{(q^3;q^3)_s} \nonumber\\
			&=G(q)\bigl(
		\mathscr S_{\mathsf A}(q^3)+\mathscr T_{\mathsf A}(1)
		\bigr)
		+H(q)\bigl(
		\mathscr S_{\mathsf B}(q^3)+\mathscr T_{\mathsf B}(1)
		\bigr) \qquad ({\rm by}\ \eqref{h-7}) \nonumber \\
		&=G(q)F_4+H(q)F_3,
	\end{align}
	where the last equality again follows from \eqref{h-3} and \eqref{h-4}.
	This proves \eqref{eq:sumSchur-2}.

	Taking  $(a,b)=(1,0)$ in \eqref{h-9} gives 
	\begin{align}\label{h-10}
		\mathfrak N(1,0)
		&=\sum_{r\ge0}\frac{q^{r^2+r}}{(q;q)_r}
		+\sum_{s\ge1}\frac{q^{3s^2}}{(q^3;q^3)_s}
		\sum_{r\ge0}\frac{q^{r^2-(3s-1)r}}{(q;q)_r}\nonumber\\
		&=H(q)
		+ \sum_{s\ge1}
		\frac{q^{3s^2} }{(q^3;q^3)_s}(G(q) \mathsf A_{3s-1}  +H(q) \mathsf B_{3s-1}) .
	\end{align}
Here we have used \eqref{eq:intro-RR2} and \eqref{eq:schur}. 
	Since $\mathsf A_{-1}=0$ and $\mathsf B_{-1}=1$, we have
	\begin{align}
		\sum_{s\ge1}
		\frac{q^{3s^2}\mathsf A_{3s-1}}{(q^3;q^3)_s}
		&=\mathscr T_{\mathsf A}(1),\qquad 
		\sum_{s\ge1}
		\frac{q^{3s^2}\mathsf B_{3s-1}}{(q^3;q^3)_s}
		 =\mathscr T_{\mathsf B}(1)-1.\label{h-11}
	\end{align}
Combining \eqref{h-5}, \eqref{h-6}, \eqref{h-10}
 and \eqref{h-11} yields 
 \[
	\begin{aligned}
		\mathfrak N(1,0)
		&=H(q)+G(q)\mathscr T_{\mathsf A}(1)
		+H(q)\bigl(\mathscr T_{\mathsf B}(1)-1\bigr)\\
		&=G(q)\mathscr T_{\mathsf A}(1)
		+H(q)\mathscr T_{\mathsf B}(1)\\
		&=q^2G(q)F_6+H(q)F_5,
	\end{aligned}
	\]
	which proves \eqref{eq:sumSchur-3}.
 \end{proof}

 Let $\mathfrak q,x,t$ be independent variables, with $\mathfrak q\ne0$.
 For finite integer sets $I$, define
 $$\mathscr H_I(t):=\prod_{e\in I}(t-\mathfrak q^ex),\qquad   
  \mathscr J_I(t):=\prod_{e\in I}(t-\mathfrak q^ex^2).$$
 
 Moreover, 
 define
 \begin{align}
 	f(t)&:=\frac{(t-\mathfrak q^9x^3)(t-\mathfrak q^4x)(t-\mathfrak q^5x)}
 	{\mathscr J_{\{7,8,10,11\}}(t)}, \label{h-12}\\
 	\mathscr R(t)&:= 
 	\frac{t(t-1) \mathscr J_{\{-2,-1,1,2,4,5\}}(t)}
 	{x^3(t-x^3) \mathscr H_{\{-5,-4,-2,-1,1,2\}}(t)},\label{h-13}\\
 \mathscr C_1(t)&:=
 \frac{\mathfrak q^9x^3(t-\mathfrak q^9x^3)\mathscr H_{\{4,5,7,8,10\}}(t)}
 {t^2\mathscr J_{\{7,8,10,11,13\}}(t)}, \nonumber \\ 
 \mathscr C_2(t)&:=
 \frac{\Pi(t)\mathscr H_{\{4,5,7\}}(t)}{t^2\mathscr J_{\{7,8,10,11\}}(t)},
 \nonumber
  \end{align}
 where
 \[
 \Pi(t):=-\mathfrak q^8x^4(1+\mathfrak q^2)t^2
 +\mathfrak q^{18}x^5(1+x)(1+\mathfrak qx)t-\mathfrak q^{27}x^8(1+\mathfrak q^2).
 \]
 
 In addition,  the following rational functions are defined  in~\cite[Section 4]{Xia}:
  \begin{align*}
  	\mathscr D_1(t)&:=\mu_0(t)-\chi_1(t)
  	-\mathfrak q^3x(1+\mathfrak q^3x)\mu_1(t)-\mathfrak q^8x^2\chi_2(t),\\
  	\mathscr D_2(t)&:=\chi_0(t)-(1+\mathfrak qx)\mu_0(t)
  	-\mathfrak q^2x(1+\mathfrak qx)\chi_1(t)+\mathfrak q^7x^3\mu_1(t),
  \end{align*}
  where
  \begin{align*}
  		\mu_0(t)&:=1-\mathfrak q^9x^3/t^2,\\
  	\chi_1(t)&:=\frac{(1-\mathfrak q^3x)(1-\mathfrak q^2x)\mu_0(t)t}
  	{(t-\mathfrak q^2x)(t-\mathfrak q^7x^2)},\\
  	\mu_1(t)&:=(1-\mathfrak q^3x)(1-\mathfrak q^{18}x^3/t^2)f(t),\\
  	\chi_2(t)&:=\frac{(1-\mathfrak q^3x)(1-\mathfrak q^6x)(1-\mathfrak q^5x)
  		(1-\mathfrak q^{18}x^3/t^2)f(t)t}{(t-\mathfrak q^5x)(t-\mathfrak q^{13}x^2)},\\
  	\chi_0(t)&:=\frac{(1-\mathfrak q^{-1}x)(1-x^3/t^2)t\mathscr J_{\{2,4,5\}}(t)}
  	{(t-x^3)\mathscr H_{\{-1,1,2\}}(t)}.
  \end{align*}

\begin{lemma}\label{lem:rational-certificates}
For $i=1,2$,  
\begin{equation}\label{eq:ratcert}
\mathscr D_i(t)=\mathscr C_i(t)+\mathscr R(t)\mathscr C_i(\mathfrak q^9t) .
\end{equation}
\end{lemma}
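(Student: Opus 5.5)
This is an identity between explicit rational functions of $t$ over the field $\mathbb Q(\mathfrak q,x)$. I will prove it by clearing denominators and comparing polynomials, organizing the computation so that every step can be checked.

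\emph{Step 1: common denominator.} I will collect the possible poles of each side as functions of $t$. On the left, $\mu_0$ has a pole only at $t=0$. The function $\chi_1$ contributes the factors $(t-\mathfrak q^2x)(t-\mathfrak q^7x^2)$. Through $f$ (see \eqref{h-12}), $\mu_1$ contributes $\mathscr J_{\{7,8,10,11\}}(t)$ and $t^2$. The function $\chi_2$ contributes in addition $(t-\mathfrak q^5x)(t-\mathfrak q^{13}x^2)$, and $\chi_0$ contributes $(t-x^3)\mathscr H_{\{-1,1,2\}}(t)$.

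On the right, $\mathscr C_i(t)$ has denominator $t^2\mathscr J_{\{7,8,10,11,13\}}(t)$ (or without the factor for $13$ when $i=2$). The function $\mathscr R(t)$ in \eqref{h-13} has denominator $(t-x^3)\mathscr H_{\{-5,\dots,2\}}(t)$. The shifted term $\mathscr C_i(\mathfrak q^9t)$ has denominator $t^2\mathscr J_{\{-2,-1,1,2,4\}}(t)$ up to powers of $\mathfrak q$. Most of this is cancelled by the numerator $t(t-1)\mathscr J_{\{-2,-1,1,2,4,5\}}(t)$ of $\mathscr R$. The numerator factors $\mathscr H_{\{4,5,\dots\}}(\mathfrak q^9t)$ become $\mathscr H_{\{-5,-4,\dots\}}(t)$ up to constants, and these cancel part of the denominator of $\mathscr R$.

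So the first concrete task is to record these cancellations explicitly. I will write $\mathscr R(t)\mathscr C_i(\mathfrak q^9t)$ in lowest terms and check that every pole on the right also occurs on the left. For example, the pole at $t=\mathfrak q^2x$ must match the one coming from $\chi_1$ and $\chi_0$, and the pole at $t=x^3$ must match the one from $\chi_0$.

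\emph{Step 2: residue comparison.} Both sides are rational in $t$ with poles only at the listed simple points, together with a pole of bounded order at $t=0$. It is therefore enough to show three things:
\begin{itemize}
\item[(a)] the residues agree at each simple pole $t=\mathfrak q^ex$ and $t=\mathfrak q^ex^2$, and at $t=x^3$;
\item[(b)] the principal parts at $t=0$ agree;
\item[(c)] the behaviour as $t\to\infty$ agrees.
\end{itemize}
For (c), each side tends to a constant, and I will compare the two constants, which amounts to matching polynomial parts.

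Each residue is an explicit product of monomial differences in $\mathfrak q$ and $x$. The structure of the construction should make many of these checks trivial. At the poles of $\mathscr C_i(t)$ that also occur in $\mu_1$ or $\chi_2$, the residue of $\mathscr R(t)\mathscr C_i(\mathfrak q^9t)$ should vanish, because $\mathscr R$ has a zero there. This reduces those checks to an identity between the $f$-terms on the left and $\mathscr C_i$.

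\emph{Step 3: polynomial check.} As a safer alternative, I can multiply through by the common denominator $t^2(t-x^3)\mathscr J_{\{7,8,10,11,13\}}(t)\mathscr H_{\{-1,1,2\}}(t)(t-\mathfrak q^2x)(t-\mathfrak q^5x)$. This reduces \eqref{eq:ratcert} to the vanishing of a polynomial in $t$ of modest degree, roughly $12$, with coefficients in $\mathbb Z[\mathfrak q,x]$. I will verify this by expanding coefficient by coefficient, or equivalently by evaluating at more points $t$ than the degree. A computer algebra check is the honest route here, and I would present the certificate in that form.

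The main obstacle is bookkeeping for $\mathscr C_2$. The polynomial $\Pi(t)$ does not factor, so those residues cannot be read off as products. For $i=2$ I would therefore rely on the cleared-denominator polynomial identity of Step 3, which avoids factoring $\Pi$, rather than on the residue comparison of Step 2.
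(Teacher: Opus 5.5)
Your Step 3 is exactly the paper's proof: put both sides of \eqref{eq:ratcert} over a common denominator and check, by symbolic expansion, that the numerator is the zero polynomial in $t,x,\mathfrak q$. Your Step 2 partial-fraction comparison is just another way to organize the same finite check. One small correction to a side remark: the second term contributes nothing at the poles $t=\mathfrak q^ex^2$, $e\in\{7,8,10,11,13\}$, not because $\mathscr R$ vanishes there (it does not), but because the zeros of $\mathscr R$ cancel the poles of $\mathscr C_i(\mathfrak q^9t)$; for instance, $\mathscr R(t)\mathscr C_1(\mathfrak q^9t)=(t-1)(t-\mathfrak q^5x^2)/\bigl(t(t-\mathfrak q^2x)\bigr)$.
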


\begin{proof}
Substitute the displayed rational functions into \eqref{eq:ratcert}
and multiply by their common denominator. In each case, expansion
of the numerator gives the zero polynomial in $t,x,\mathfrak q$.
Thus both identities hold as rational functions. 
\end{proof}

For a positive integer $d$ and a parameter $c$, define
\begin{equation}\label{eq:kernel-unified}
	\mathcal D_{d,c}(z):=\prod_{k\ge0}(1-zq^{dk}+cq^{2dk}),\qquad
	\mathcal P_{d,c}(z):=\mathcal D_{d,c}(z)^{-1}.
\end{equation}
Thus, if $z=\alpha+\beta$ and $\alpha\beta=c$, then
\[
\mathcal D_{d,c}(z)=(\alpha,\beta;q^d)_\infty, \qquad 
\mathcal P_{d,c}(z)=\frac{1}{(\alpha,\beta;q^d)_\infty}.\]
The second subscript is always the product $c=\alpha\beta$. This convention applies equally to
$d=3$ and $d=9$.

For $n\ge0$ and $0\le j\le n$, define
\begin{align}\label{v-4}
	\xi_{n,j}:=q^{4-6n+3j}+q^{5-3n-3j},
\end{align}
and, for $n\ge1$,
\begin{align}\label{v-5}
	\eta_{n,j}:=q^{8-6n+3j}+q^{10-3n-3j}.
\end{align}

Define
\begin{align}
	C_n^*
	&:=\mathcal E_9(-1)^n q^{-9\binom n2}(q^3;q^3)_n
	\mathcal D_{9,q^{9-9n}}[\xi_{n,0},\ldots,\xi_{n,n}],
	&& n\ge0, \label{eq:Cstar}\\
	A_n^*
	&:=\mathcal E_9(-1)^n q^{-9\binom n2}(q^3;q^3)_n
	(1-q^{1-3n})
	\mathcal D_{9,q^{18-9n}}[\eta_{n,0},\ldots,\eta_{n,n}],
	&& n\ge1, \label{eq:Astar}\\
	B_n^*
	&:=C_n^*+q^{3n+4}A_{n+1}^*,
	&& n\ge0.  \label{eq:Bstar}
\end{align}
The remaining initial value is defined separately by
$$
A_0^*:=\mathcal E_9X_1.
$$

Factoring the difference of two nodes shows that equality at distinct
indices would require $3(j+k)=3n+1$ for the $\xi $ nodes or
$3(j+k)=3n+2$ for the $\eta$ nodes, both of which are impossible.
Therefore, the divided differences in \eqref{eq:Cstar} and \eqref{eq:Astar} are
well defined.

\begin{lemma}\label{lem:starrec}
The vector $\mathbf P_n:=(A_n^*,B_n^*,C_n^*)^{\mathsf T}$ satisfies
\begin{equation}\label{eq:tilde}
\mathbf P_n=\widetilde{\mathbf M}(q^{3n})\mathbf P_{n+1},\qquad
\widetilde{\mathbf M}(x):=
\begin{pmatrix}
q^3(1+qx)(1+q^2x)&q^2(1+qx)&1\\
q^4x(1+q^2x)&q^3x&1\\
q^6x^2&q^3x&1
\end{pmatrix}.
\end{equation}
Its initial vector is $\E(X_1,X_2,X_3)^{\mathsf T}$.
\end{lemma}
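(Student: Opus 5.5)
The plan is to verify the three rows of \eqref{eq:tilde} directly from the divided-difference definitions, using the rational certificates of Lemma~\ref{lem:rational-certificates}; the initial vector is handled separately. Row~3 says $C_n^*=q^6x^2A_{n+1}^*+q^3xB_{n+1}^*+C_{n+1}^*$ with $x=q^{3n}$. Row~2 says $B_n^*=q^4x(1+q^2x)A_{n+1}^*+q^3xB_{n+1}^*+C_{n+1}^*$. Row~1 is the analogous relation for $A_n^*$.

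\textbf{Reducing the rows.} By \eqref{eq:Bstar}, $B_n^*-C_n^*=q^{3n+4}A_{n+1}^*$. Subtracting row~3 from row~2 gives
\[
B_n^*-C_n^*=\bigl(q^4x+q^6x^2-q^6x^2\bigr)A_{n+1}^*=q^{3n+4}A_{n+1}^*,
\]
so row~2 is automatic once row~3 holds. After substituting $B_{n+1}^*=C_{n+1}^*+q^{3n+7}A_{n+2}^*$, row~3 becomes a three-term relation among $C_n^*$, $C_{n+1}^*$ and $A_{n+1}^*$, with the $A_{n+2}^*$ term absorbed. Row~1 similarly becomes a relation among $A_n^*$, $A_{n+1}^*$, $A_{n+2}^*$ and $C_{n+1}^*$. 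So it suffices to prove two scalar identities linking the $\xi$- and $\eta$-divided differences at consecutive levels.

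\textbf{Proving the scalar identities.} Each divided difference of $\mathcal D_{9,c}$ at nodes of the form $\alpha+\beta$ with $\alpha\beta=c$ is a finite sum of products $(\alpha,\beta;q^9)_\infty$. I will rewrite everything in the summation variable $t=\alpha$, setting $\mathfrak q=q$ and $x=q^{3n}$-type values. In this variable the node $\xi_{n,j}$ corresponds to $t=q^{4-6n+3j}$, and the partner node is $c/t$. Passing from level $n$ to level $n+1$ shifts the node set by a factor $q^{9}$ up to reindexing, which is exactly why $\mathscr R(t)\mathscr C_i(\mathfrak q^9t)$ appears in \eqref{eq:ratcert}.

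I then express each side as $\sum_t \mathcal D(t)\cdot(\text{rational function of }t)$. Here the ratio of the products for the shifted parameter $c'=q^{9}c$ versus $c$ is computed using $(\alpha q^9;q^9)_\infty=(\alpha;q^9)_\infty/(1-\alpha)$. The Vandermonde denominators $\prod_{k\ne j}$ produce factors like $\mathscr H_I$ and $\mathscr J_I$. The desired relation then becomes $\sum_t \mathcal D(t)\mathscr D_i(t)=0$. By \eqref{eq:ratcert}, $\mathscr D_i(t)=\mathscr C_i(t)+\mathscr R(t)\mathscr C_i(\mathfrak q^9 t)$, and $\mathscr R(t)$ is precisely the ratio $\mathcal D(q^9t)/\mathcal D(t)$-type factor along the node chain. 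So the sum telescopes: the $\mathscr C_i(t)$ term at node $t$ cancels the $\mathscr R\mathscr C_i(\mathfrak q^9t)$ term at the neighbouring node. Only boundary terms survive, and these vanish because $\mathscr C_i$ (with its $t-\mathfrak q^9x^3$ factor and the zeros of $\mathscr R$ at $t=1$ and $t=\mathfrak q^ex^2$) kills the endpoints. The prefactors $\E(-1)^nq^{-9\binom n2}(q^3;q^3)_n$ account for the normalising constants: the ratio at consecutive $n$ is $-q^{-9n}(1-q^{3n+3})$, which should match the leading factors of $\mathscr D_1,\mathscr D_2$ (e.g.\ $\mu_0,\chi_0$).

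\textbf{Initial vector.} I need $A_0^*=\E X_1$, which is true by definition. For $n=0$, $C_0^*=\E\,\mathcal D_{9,q^9}(q^4+q^5)=\E(q^4,q^5;q^9)_\infty=\E X_3$. For $B_0^*=C_0^*+q^4A_1^*$, I use $A_1^*=-\E(1-q^3)(1-q^{-2})\mathcal D_{9,q^9}[\eta_{1,0},\eta_{1,1}]$ with nodes $q^2+q^7$ and $q^5+q^4$, a first divided difference of two products. Showing $B_0^*=\E X_2$ is then a single three-term theta relation among $(q^2,q^7)$, $(q^4,q^5)$ and $(q,q^8)$ on base $q^9$, which I will prove with the Weierstrass three-term identity.

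\textbf{Main obstacle.} The hard part is the bookkeeping that matches the divided-difference sums exactly to the $t$-sums whose summand is $\mathscr D_i(t)$. This includes the correct node ranges at levels $n$ and $n+1$ and the verification that the boundary terms vanish. I would check this first at $n=0,1$ numerically before writing the general telescoping.
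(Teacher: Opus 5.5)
Your reduction is correct and matches the paper: row~2 is row~3 plus \eqref{eq:Bstar}, and the lemma comes down to two scalar recurrences, one from each certificate in \eqref{eq:ratcert}. The gap is in the step you defer, which is the real content of the proof. The mechanism you sketch for it is not the one that works.

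\textbf{The $\mathfrak q^9$-shift is not a change of level.} The shift in \eqref{eq:ratcert} does not implement the passage from level $n$ to level $n+1$. Consecutive levels carry $n+1$ and $n+2$ nodes, so they are not translates of each other. In the paper everything happens at a fixed level, with $\mathfrak q=q^{-1}$ and $x=q^{-3n}$. There is an explicit rational weight $K_n(t;\mathfrak q)$ with $\mathscr R(t)=\mathfrak q^9tK_n(\mathfrak q^9t;\mathfrak q)/K_n(t;\mathfrak q)$. The changes of level are carried by the individual pieces of $\mathscr D_i$: by finite-product cancellation, each of $K_n\mu_0$, $K_n\chi_0$, $K_n\chi_1$, $K_n\mu_1$, $K_n\chi_2$ is a constant multiple of $(1-\mathfrak q^{9m+9}/t^2)K_m$ or of $I_m$ for some $m\in\{n,n+1,n+2\}$.

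\textbf{Residues, not values.} These five pieces have different pole sets. The divided-difference weights arise as their residues, not as values of a rational function at the nodes. So the recurrence cannot be written as $\sum_t\mathcal D(t)\mathscr D_i(t)=0$ over a single node set. What makes it work is the theta kernel $\Psi_*(t)=(q^9t,1/t;q^9)_\infty$, which absorbs the infinite products at every level, together with the residue sum $\mathscr L_*$ over \emph{all} points $t=q^r$ with $3\nmid r$. Up to explicit normalizing factors, each of the five divided differences is $\mathscr L_*$ of the corresponding piece. The quasi-periodicity $\Psi_*(q^9z)=-q^{-9}z^{-1}\Psi_*(z)$ then makes the residues of the $\mathscr C_i(t)$-part and of the $\mathscr R(t)\mathscr C_i(\mathfrak q^9t)$-part cancel bijectively on this shift-invariant set, giving $\mathscr L_*(K_n\mathscr D_i)=0$. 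No boundary analysis via zeros of $\mathscr C_i$ or $\mathscr R$ is needed, because a rational function has only finitely many poles. Without this residue device, or an equivalent one, the scalar identities remain unproved.

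\textbf{Three smaller points.}
\begin{enumerate}
\item After substituting $B_{n+1}^*=C_{n+1}^*+q^{3n+7}A_{n+2}^*$, row~3 is the four-term relation $C_n^*=(1+q^{3n+3})C_{n+1}^*+q^{6n+6}A_{n+1}^*+q^{6n+10}A_{n+2}^*$. The $A_{n+2}^*$ term is not absorbed; this is exactly the recurrence coming from $\mathscr D_1$.
\item The initial vector needs no three-term theta identity. The node difference is $\eta_{1,0}-\eta_{1,1}=q^2(1-q^2)(1-q^3)$, so $q^4A_1^*=\E(X_2-X_3)$ and $B_0^*=\E X_2$ immediately.
\item $A_0^*=\E X_1$ is defined separately, while $\eta_{n,j}$ and $I_n$ exist only for $n\ge1$. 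So the $\mathscr D_2$-recurrence, and hence row~1, is not covered at $n=0$ by the general argument. The paper checks this case directly: $C_1^*=\E((1+q)X_2-qX_1)$ gives $qA_0^*-(1+q)C_0^*-q^4(1+q)A_1^*+C_1^*=0$, which is combined with the $n=0$ case of the $\mathscr D_1$-recurrence.
\end{enumerate}
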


\begin{proof}
We first establish a residue identity and then apply it to the
rational identities in Lemma~\ref{lem:rational-certificates}. Fix $0<q<1$ and put
\[
\Psi_*(t):=(q^9t,1/t;q^9)_\infty.
\]
This function is holomorphic on $\mathbb C^*$. For a rational function $R(t)\in\mathbb C(t)$, with
coefficients possibly depending on the fixed $q$, define
\begin{equation}\label{eq:residue-definition}
\mathscr L_*(R):=\frac12
\sum_{\substack{r\in\mathbb Z\\3\nmid r}}
\operatorname{Res}_{t=q^r}\bigl(\Psi_*(t)R(t)\bigr).
\end{equation}
Only poles of $R$ in $\mathbb C^*$ can contribute, so this sum has
only finitely many nonzero terms.

Shifting the product factors gives
\begin{equation}\label{eq:psi-residue-shift}
\Psi_*(q^9z)=-q^{-9}z^{-1}\Psi_*(z).
\end{equation}
Under $t=q^9z$, comparison of Laurent coefficients shows that
\[
\operatorname{Res}_{t=q^r}f(t)
=\operatorname{Res}_{z=q^{r-9}}\bigl(q^9f(q^9z)\bigr)
\]
for any function $f$ meromorphic near $q^r$.
Indeed, since
$
q^9z-q^r=q^9(z-q^{r-9}),
$
the coefficient of $(z-q^{r-9})^{-1}$ in the Laurent
expansion of $f(q^9z)$ is $q^{-9}$ times the coefficient
of $(t-q^r)^{-1}$ in the Laurent expansion of $f(t)$.
Multiplication by $q^9$ therefore makes these coefficients,
and hence the corresponding residues, equal.
Applying this observation and \eqref{eq:psi-residue-shift}, we get
\[
\begin{aligned}
 \operatorname{Res}_{t=q^r}
 \bigl(\Psi_*(t)q^{-9}tR(q^{-9}t)\bigr)&=\operatorname{Res}_{z=q^{r-9}}
 \bigl(q^9\Psi_*(q^9z)zR(z)\bigr)
 \\
&=-\operatorname{Res}_{z=q^{r-9}}\bigl(\Psi_*(z)R(z)\bigr).
\end{aligned}
\]
Since $3\nmid r$ if and only if $3\nmid(r-9)$, reindexing the
finite sum in \eqref{eq:residue-definition} gives
\[
\mathscr L_*(q^{-9}tR(q^{-9}t))=-\mathscr L_*(R).
\]
 Hence
\begin{equation}\label{eq:resShift}
\mathscr L_*\bigl(R(t)+q^{-9}tR(q^{-9}t)\bigr)=0.
\end{equation}

We now introduce two rational functions to which \eqref{eq:resShift} will
be applied.
 For an auxiliary nonzero parameter $\mathfrak q$, set
\begin{equation}\label{eq:star-rational-functions}
\begin{aligned}
K_n(t;\mathfrak q)
&:=\frac{(\mathfrak q^9/t;\mathfrak q^9)_n}
 {t^{n+1}(\mathfrak q^{3n+4}/t,\mathfrak q^{3n+5}/t;
 \mathfrak q^3)_{n+1}}, &&n\ge0,\\
I_n(t;\mathfrak q)
&:=\frac{(1-\mathfrak q^{3n-1})(1-\mathfrak q^{9n}/t^2)
 (\mathfrak q^9/t;\mathfrak q^9)_{n-1}}
 {t^{n+1}(\mathfrak q^{3n-1}/t,\mathfrak q^{3n+1}/t;
 \mathfrak q^3)_{n+1}}, &&n\ge1.
\end{aligned}
\end{equation}
All products in \eqref{eq:star-rational-functions} are finite, so we
may substitute $\mathfrak q=q^{-1}$.
For $x=\mathfrak q^{3n}$, cancellation of finite products gives
\begin{align}
 f(t)|_{x=\mathfrak q^{3n}}=\frac{K_{n+1}(t;\mathfrak q)}{K_n(t;\mathfrak q)},\qquad
 \mathscr R(t)|_{x=\mathfrak q^{3n}}=\frac{\mathfrak q^9tK_n(\mathfrak q^9t;\mathfrak q)}
 {K_n(t;\mathfrak q)},\label{h-20}
\end{align}
where $f(t)$ and $\mathscr R(t)$ are defined by \eqref{h-12} and \eqref{h-13}, respectively.  
Thus the two terms on the right of \eqref{eq:ratcert}, after
multiplication by $K_n(t;\mathfrak q)$, have exactly the form in \eqref{eq:resShift}
when $\mathfrak q=q^{-1}$.

Combining the finite product with $\Psi_*$ and using
\eqref{eq:kernel-unified}, we obtain, for $n\ge0$,
\begin{equation}\label{eq:star-product-combination}
\begin{aligned}
\Psi_*(t)(q^{-9}/t;q^{-9})_n
&=(q^9t,q^{-9n}/t;q^9)_\infty\\
&=\mathcal D_{9,q^{9-9n}}
 \left(q^9t+\frac{q^{-9n}}{t}\right).
\end{aligned}
\end{equation}
To evaluate the expression involving $K_n(t;\mathfrak q)$, define
\[
z_n(t):=t+\frac{q^{-9n-9}}t,\quad
a_{n,j}:=q^{-3n-4-3j},\quad b_{n,j}:=q^{-6n-5+3j}
\quad(0\le j\le n).
\]
By \eqref{v-4}, these points satisfy
\[
a_{n,j}b_{n,j}=q^{-9n-9},\quad
q^9(a_{n,j}+b_{n,j})=\xi_{n,j},\quad
z_n(t)-q^{-9}\xi_{n,j}
=\frac{(t-a_{n,j})(t-b_{n,j})}{t}.
\]
Pairing the factors in the denominator of
\eqref{eq:star-rational-functions} in this order therefore gives
\[
t^{n+1}(q^{-3n-4}/t,q^{-3n-5}/t;q^{-3})_{n+1}
=\prod_{j=0}^n\bigl(z_n(t)-q^{-9}\xi_{n,j}\bigr).
\]
Together with \eqref{eq:star-product-combination}, this yields
\begin{equation}\label{eq:star-K-integrand}
\Psi_*(t)\left(1-\frac{q^{-9n-9}}{t^2}\right)K_n(t;q^{-1})
=\frac{\mathcal D_{9,q^{9-9n}}(q^9z_n(t))z_n'(t)}
 {\displaystyle\prod_{j=0}^n(z_n(t)-q^{-9}\xi_{n,j})}.
\end{equation}

For clarity, the elementary residue formula used below is
\begin{equation}\label{eq:star-local-residue}
\operatorname{Res}_{t=t_0}
\frac{F(z(t))z'(t)}{\prod_{k=0}^n(z(t)-\zeta_k)}
=\frac{F(\zeta_j)}{\prod_{k\ne j}(\zeta_j-\zeta_k)},
\end{equation}
where the $\zeta_k$ are distinct, $F(z)$ is holomorphic near $\zeta_j$,
and $z$ is holomorphic near $t_0$ with $z(t_0)=\zeta_j$ and
$z'(t_0)\ne0$. It follows by multiplying the fraction by $t-t_0$
and taking the limit, since
\[
\lim_{t \to t_0}\frac{z(t)-\zeta_j}{t-t_0}= z'(t_0).
\]

The exponents of $a_{n,j}$ and $b_{n,j}$ are congruent to $2$ and
$1$ modulo $3$, respectively, and are distinct within each family.
Thus all $2n+2$ points are distinct and occur in
\eqref{eq:residue-definition}. Also
\[
z_n'(a_{n,j})=1-b_{n,j}/a_{n,j}\ne0,
\]
 and similarly at $b_{n,j}$.
 
 Applying \eqref{eq:star-local-residue} to
 \eqref{eq:star-K-integrand}, with
 \[
 F(u):=\mathcal D_{9,q^{9-9n}}(q^9u),
 \qquad
 \zeta_k:=q^{-9}\xi_{n,k},
 \qquad
 z(t):=z_n(t),
 \]
 gives, for either \(t_0=a_{n,j}\) or \(t_0=b_{n,j}\),
 \begin{align*}
  &\quad \operatorname{Res}_{t=t_0}
 	\left(
 	\Psi_*(t)
 	\left(1-\frac{q^{-9n-9}}{t^2}\right)
 	K_n(t;q^{-1})
 	\right) \nonumber\\ & =\operatorname{Res}_{t=t_0}
 	\left(
 \frac{\mathcal D_{9,q^{9-9n}}(q^9z_n(t))z_n'(t)}
 {\prod_{j=0}^n(z_n(t)-q^{-9}\xi_{n,j})}
 	\right)\\
 	&   =
 	\frac{
 		\mathcal D_{9,q^{9-9n}}
 		\bigl( \xi_{n,j}\bigr)
 	}{
 		 \prod_{\substack{0\le k\le n,\\k\ne j}}
 		\bigl(q^{-9}\xi_{n,j}-q^{-9}\xi_{n,k}\bigr)
 	}\nonumber\\
 	&=
 	q^{9n}
 	\frac{
 		\mathcal D_{9,q^{9-9n}}(\xi_{n,j})
 	}{	\prod_{\substack{0\le k\le n,\\k\ne j}}
 		(\xi_{n,j}-\xi_{n,k})
 	}.
 \end{align*} 
 Here we used
 \(z_n(a_{n,j})=z_n(b_{n,j})=q^{-9}\xi_{n,j}\)
 and the fact that \(z_n'\) is nonzero at both points.
 The factor \(q^{9n}\) arises by extracting \(q^{-9}\)
 from each of the \(n\) factors in the denominator.
 Thus the residues at \(a_{n,j}\) and \(b_{n,j}\) are equal.

The two equal contributions cancel the factor $1/2$
in \eqref{eq:residue-definition}; summing the divided-difference
formula gives
\begin{equation}\label{eq:star-K-residue}
\mathscr L_*\left(
 \left(1-\frac{q^{-9n-9}}{t^2}\right)K_n(t;q^{-1})\right)
=q^{9n}\mathcal D_{9,q^{9-9n}}[\xi_{n,0},\ldots,\xi_{n,n}].
\end{equation}

For $I_n$, let $n\ge1$ and define
\[
\widehat z_n(t):=t+\frac{q^{-9n}}t,\qquad
\widehat a_{n,j}=q^{1-3n-3j},\qquad
\widehat b_{n,j}=q^{-6n-1+3j}\quad(0\le j\le n).
\]
By \eqref{v-5},
\[
\widehat a_{n,j}\widehat b_{n,j}=q^{-9n},\qquad
q^9(\widehat a_{n,j}+\widehat b_{n,j})=\eta_{n,j}.
\]
Using \eqref{eq:star-product-combination} with $n$ replaced by
$n-1$, and pairing the denominator factors as before, we obtain
\[
\Psi_*(t)I_n(t;q^{-1})
=(1-q^{1-3n})
\frac{\mathcal D_{9,q^{18-9n}}(q^9\widehat z_n(t))
 \widehat z_n'(t)}
 {\displaystyle\prod_{j=0}^n
 (\widehat z_n(t)-q^{-9}\eta_{n,j})}.
\]
The two families of points again have distinct exponents congruent
to $1$ and $2$ modulo $3$. Formula \eqref{eq:star-local-residue}
therefore applies at both points corresponding to each $j$ and gives
\begin{equation}\label{eq:star-I-residue}
\mathscr L_*\bigl(I_n(t;q^{-1})\bigr)
=(1-q^{1-3n})q^{9n}
\mathcal D_{9,q^{18-9n}}[\eta_{n,0},\ldots,\eta_{n,n}].
\end{equation}

Since
\[
q^{9n}(q^{-3};q^{-3})_n
=q^{3n(n+1)}(-1)^nq^{-9\binom n2}(q^3;q^3)_n,
\]
equations \eqref{eq:star-K-residue}--\eqref{eq:star-I-residue}
and the definitions \eqref{eq:Cstar}--\eqref{eq:Astar} imply
\begin{equation}\label{eq:star-residue-normalization}
\begin{aligned}
\E(q^{-3};q^{-3})_n
\mathscr L_*\left(
 \left(1-\frac{q^{-9n-9}}{t^2}\right)K_n(t;q^{-1})\right)
&=q^{3n(n+1)}C_n^*, &&n\ge0,\\
\E(q^{-3};q^{-3})_n\mathscr L_*\bigl(I_n(t;q^{-1})\bigr)
&=q^{3n(n+1)}A_n^*, &&n\ge1.
\end{aligned}
\end{equation}

We now derive the recurrences explicitly. Throughout the following
calculation, set \(x=\mathfrak q^{3n}\) in the rational functions
of Lemma~\ref{lem:rational-certificates}.
By their definitions and \eqref{eq:star-rational-functions},
cancellation of the finite product factors gives
\begin{align}
	K_n(t;\mathfrak q)\mu_0(t)
	&=\left(1-\frac{\mathfrak q^{9n+9}}{t^2}\right)
	K_n(t;\mathfrak q), \label{h-21}\\
	K_n(t;\mathfrak q)\chi_0(t)
	&=I_n(t;\mathfrak q),\label{h-22}\\
	K_n(t;\mathfrak q)\chi_1(t)
	&=(1-\mathfrak q^{3n+3})I_{n+1}(t;\mathfrak q),\label{h-23}\\
	K_n(t;\mathfrak q)\mu_1(t)
	&=(1-\mathfrak q^{3n+3})
	\left(1-\frac{\mathfrak q^{9n+18}}{t^2}\right)
	K_{n+1}(t;\mathfrak q), \label{h-25}\\
	K_n(t;\mathfrak q)\chi_2(t)
	&=(1-\mathfrak q^{3n+3})(1-\mathfrak q^{3n+6})
	I_{n+2}(t;\mathfrak q). \label{h-26}
\end{align}
The identity involving \(\chi_0\) holds for \(n\ge1\);
the other four hold for \(n\ge0\).

Next, multiplying \eqref{eq:ratcert} by
\(K_n(t;\mathfrak q)\) and using \eqref{h-20}, we obtain
\begin{align*}
	K_n(t;\mathfrak q)\mathscr D_i(t)
	&=K_n(t;\mathfrak q)\mathscr C_i(t)+\mathfrak q^9t
	K_n(\mathfrak q^9t;\mathfrak q)
	\mathscr C_i(\mathfrak q^9t),
	\qquad i=1,2.
\end{align*}
From this point onward, set \(\mathfrak q=q^{-1}\) and
\(x=q^{-3n}\) in all the rational functions above.
Applying \eqref{eq:resShift} with
\(R(t)=K_n(t;q^{-1})\mathscr C_i(t)\) gives
\[
\mathscr L_*\bigl(K_n(t;q^{-1})\mathscr D_i(t)\bigr)=0,
\qquad i=1,2.
\] 

Define
\begin{align}
\widehat A_n:=q^{3n(n+1)}A_n^*,
\qquad
\widehat C_n:=q^{3n(n+1)}C_n^*. \label{h-17}
\end{align}
The five product identities \eqref{h-21}--\eqref{h-26}, together with
\eqref{eq:star-residue-normalization}, imply
\[
\begin{aligned}
	\mathcal E_9(q^{-3};q^{-3})_n
	\mathscr L_*\bigl(K_n(t;q^{-1})\mu_0(t)\bigr)
	&=\widehat C_n,\\
	\mathcal E_9(q^{-3};q^{-3})_n
	\mathscr L_*\bigl(K_n(t;q^{-1})\chi_0(t)\bigr)
	&=\widehat A_n,\\
	\mathcal E_9(q^{-3};q^{-3})_n
	\mathscr L_*\bigl(K_n(t;q^{-1})\chi_1(t)\bigr)
	&=\widehat A_{n+1},\\
	\mathcal E_9(q^{-3};q^{-3})_n
	\mathscr L_*\bigl(K_n(t;q^{-1})\mu_1(t)\bigr)
	&=\widehat C_{n+1},\\
	\mathcal E_9(q^{-3};q^{-3})_n
	\mathscr L_*\bigl(K_n(t;q^{-1})\chi_2(t)\bigr)
	&=\widehat A_{n+2}.
\end{aligned}
\]
For instance, the third equality follows from
\[
\begin{aligned}
	\mathcal E_9(q^{-3};q^{-3})_n
	\mathscr L_*\bigl(K_n(t;q^{-1})\chi_1(t)\bigr)
	&=
	\mathcal E_9(q^{-3};q^{-3})_n(1-q^{-3n-3})
	\mathscr L_*\bigl(I_{n+1}(t;q^{-1})\bigr)\\
	&=
	\mathcal E_9(q^{-3};q^{-3})_{n+1}
	\mathscr L_*\bigl(I_{n+1}(t;q^{-1})\bigr)
	=\widehat A_{n+1}.
\end{aligned}
\]

Finally, substitute these five evaluations into the vanishing
residue sums for \(K_n(t;q^{-1}) \mathscr D_1\) and \(K_n(t;q^{-1})\mathscr D_2\).
The definitions of \(\mathscr D_1\) and \(\mathscr D_2\)
in Lemma~\ref{lem:rational-certificates}, with
\(\mathfrak q=q^{-1}\) and \(x=q^{-3n}\), yield
\begin{equation}\label{h-15}
\begin{aligned}
	\widehat C_n-\widehat A_{n+1}
	-q^{-3n-3}(1+q^{-3n-3})\widehat C_{n+1}
	-q^{-6n-8}\widehat A_{n+2}
	&=0,  \\
	\widehat A_n-(1+q^{-3n-1})\widehat C_n
	-q^{-3n-2}(1+q^{-3n-1})\widehat A_{n+1}
	+q^{-9n-7}\widehat C_{n+1}
	&=0. 
\end{aligned}
\end{equation}
The first recurrence holds for \(n\ge0\), whereas the second
holds for \(n\ge1\), since its derivation uses the residue
formula for \(I_n\).

Define
\[
\widehat B_n:=q^{3n(n+1)}B_n^*.
\]
By \eqref{eq:Bstar} and the definitions of
$\widehat A_n$ and $\widehat C_n$, we have
\[
\widehat B_n
=\widehat C_n+q^{-3n-2}\widehat A_{n+1},
\]
and hence
\[
\widehat A_{n+2}
=q^{3n+5}\bigl(\widehat B_{n+1}-\widehat C_{n+1}\bigr).
\]
Substituting this expression into the first recurrence
in \eqref{h-15} gives
\[
\widehat C_n
=\widehat A_{n+1}
+q^{-3n-3}\widehat B_{n+1}
+q^{-6n-6}\widehat C_{n+1}.
\]
Consequently,
\begin{align*}
	\widehat B_n
	&=(1+q^{-3n-2})\widehat A_{n+1}
	+q^{-3n-3}\widehat B_{n+1}
	+q^{-6n-6}\widehat C_{n+1}.
\end{align*}
The second recurrence in \eqref{h-15} becomes
\[
\widehat A_n
=(1+q^{-3n-1})\widehat B_n
-q^{-9n-7}\widehat C_{n+1}.
\]
Substituting the expression for $\widehat B_n$, we obtain
\begin{align*}
	\widehat A_n
	&=(1+q^{-3n-1})(1+q^{-3n-2})\widehat A_{n+1}\\
	&\quad
	+q^{-3n-3}(1+q^{-3n-1})\widehat B_{n+1}
	+q^{-6n-6}\widehat C_{n+1}.
\end{align*}
Thus, for $n\ge1$, these three relations give
\[
q^{3n(n+1)}\mathbf P_n
=\mathbf M(q^{-3n};q^{-1})
q^{3(n+1)(n+2)}\mathbf P_{n+1},
\]
where
\[
\mathbf M(x;\mathfrak q):=
\begin{pmatrix}
(1+\mathfrak qx)(1+\mathfrak q^2x)&\mathfrak q^3x(1+\mathfrak qx)&\mathfrak q^6x^2\\
1+\mathfrak q^2x&\mathfrak q^3x&\mathfrak q^6x^2\\
1&\mathfrak q^3x&\mathfrak q^6x^2
\end{pmatrix}.
\]
The ratio of the two scalar factors is $q^{6n+6}$. Thus, with
$x=q^{3n}$, direct simplification gives
\[q^6x^2\mathbf M(x^{-1};q^{-1})=\widetilde{\mathbf M}(x),\]
which proves \eqref{eq:tilde} for $n\ge1$.

It remains to check $n=0$ and the initial vector. Evaluating
\eqref{eq:Cstar} and \eqref{eq:Astar} at the lowest indices gives
\begin{equation}\label{eq:starinitial}
C_0^*=\E X_3,\qquad
A_1^*=q^{-4}\E(X_2-X_3),\qquad
C_1^*=\E((1+q)X_2-qX_1).
\end{equation}
For $A_1^*$, the two product values are $X_2,X_3$ and the
node difference is $q^2(1-q^2)(1-q^3)$. For $C_1^*$, the two product values and their node difference are
$-q^{-2}(1-q^2)X_2$, $-q^{-1}(1-q)X_1$, and
$q^{-2}(1-q)(1-q^3)$, respectively. These evaluations give
\eqref{eq:starinitial} directly from the first divided difference.
By \eqref{eq:Bstar}, $B_0^*=\E X_2$; together with
$A_0^*=\E X_1$, this proves the stated initial vector.

Moreover, \eqref{eq:starinitial} gives
\begin{align}\label{h-31}
qA_0^*-(1+q)C_0^*-q^4(1+q)A_1^*+C_1^*=0.
\end{align}
Setting $n=0$ in \eqref{h-15} gives 
\[
\widehat C_0-\widehat A_{1}
-q^{-3}(1+q^{-3})\widehat C_{1}
-q^{-8}\widehat A_{2}
=0,
\]
which, together with  \eqref{h-17}, gives 
\begin{align}\label{h-32}
  C_0^*-q^6 A_{1}^*
- (1+q^3)C_{1}^*
-q^{10}A_{2}^*
=0.
\end{align}
Taking $n=0,1$ in \eqref{eq:Bstar} yields 
\begin{align} 
B_0^*&=C_0^*+q^4A_1^*, \label{h-33}\\
B_1^*&=C_1^*+q^7A_2^*. \label{h-34}
\end{align}
Solving the system \eqref{h-31}--\eqref{h-34}, we obtain
\[
\mathbf P_0=\widetilde{\mathbf M}(1)\mathbf P_{1}.
\]
Therefore,  \eqref{eq:tilde} is true when  $n=0$.

Finally, both sides are holomorphic on the punctured unit disk, and hence the identities
  extend from $0<q<1$ to $0<|q|<1$
by analyticity.
\end{proof}

\begin{lemma}\label{lem:scaling}
Define $\mathbf L(x):=\diag(x,q^{-1},q^{-1})$
 and 
 \begin{align}\label{h-38}
 	\mathbf Z_n:=\mathbf L(q^{3n})\mathbf P_n
 	=(q^{3n}A_n^*,q^{-1}B_n^*,q^{-1}C_n^*)^{\mathsf T},\qquad n\ge0.
 \end{align}
Then $\mathbf Z_n$ satisfies the recurrence~\eqref{eq:Drec}.

\end{lemma}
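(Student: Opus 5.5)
The plan is to obtain the recurrence for $\mathbf Z_n$ directly from Lemma~\ref{lem:starrec} by a diagonal change of basis, so no new analytic input is needed. By \eqref{h-38}, $\mathbf P_n=\mathbf L(q^{3n})^{-1}\mathbf Z_n$. Substituting into \eqref{eq:tilde} and multiplying on the left by $\mathbf L(q^{3n})$ gives, for every $n\ge0$,
\[
\mathbf Z_n=\mathbf L(q^{3n})\,\widetilde{\mathbf M}(q^{3n})\,\mathbf L(q^{3n+3})^{-1}\mathbf Z_{n+1}.
\]
It therefore suffices to prove the matrix identity
\[
\mathbf L(x)\,\widetilde{\mathbf M}(x)\,\mathbf L(q^3x)^{-1}=\mathbf D(x)
\]
as an identity of matrices with entries in $\mathbb C[q^{\pm1},x^{\pm1}]$. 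Specializing it at $x=q^{3n}$ then yields \eqref{eq:Drec} for $\mathbf Z_n$.

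To verify the matrix identity, note that $\mathbf L(q^3x)^{-1}=\diag(q^{-3}x^{-1},q,q)$. Conjugating by these diagonal matrices multiplies the $(i,j)$ entry of $\widetilde{\mathbf M}(x)$ by $\ell_i(x)/\ell_j(q^3x)$, where $(\ell_1,\ell_2,\ell_3)=(x,q^{-1},q^{-1})$. I would compare the resulting entries with \eqref{eq:D} one row at a time.

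\emph{First row.} The scaling factors are $q^{-3}$, $qx$, $qx$. They give
\[
(1+qx)(1+q^2x)=1+(q+q^2)x+q^3x^2,\qquad q^3x(1+qx),\qquad qx.
\]

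\emph{Second row.} The scaling factors are $q^{-4}x^{-1}$, $1$, $1$. They give $1+q^2x$, $q^3x$, $1$.

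\emph{Third row.} The scaling factors are again $q^{-4}x^{-1}$, $1$, $1$. They give $q^2x$, $q^3x$, $1$.

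These nine entries are exactly the entries of $\mathbf D(x)$.

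There is no real obstacle. The only point requiring care is that the scaling on the right uses $\mathbf L$ at the \emph{next} node $q^{3n+3}$, not at $q^{3n}$. This mismatch is precisely what produces the factor $q^{-3}x^{-1}$ in the first column. That factor cancels the $x$-divisibility of the entries $q^4x(1+q^2x)$ and $q^6x^2$ in the first column of $\widetilde{\mathbf M}(x)$, and the factor $q^{-3}$ it contributes converts the $q^3$ in the $(1,1)$ entry. Since Lemma~\ref{lem:starrec} already covers $n=0$, the recurrence holds for all $n\ge0$.
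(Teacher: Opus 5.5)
Your proposal is correct and is essentially the paper's argument: the paper checks $\mathbf L(x)\widetilde{\mathbf M}(x)=\mathbf D(x)\mathbf L(q^3x)$ by direct multiplication and then combines it with \eqref{eq:tilde}. Your conjugated identity $\mathbf L(x)\widetilde{\mathbf M}(x)\mathbf L(q^3x)^{-1}=\mathbf D(x)$ is the same statement rearranged, and your entry-by-entry verification of it is accurate.
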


\begin{proof}
By direct multiplication,
\begin{equation}\label{eq:scaling}
	\mathbf L(x)\widetilde{\mathbf M}(x)=\mathbf D(x)\mathbf L(q^3x).
\end{equation}
  Combining it
with \eqref{eq:tilde} gives
\begin{align}
 \mathbf Z_n&=\mathbf L(q^{3n}) (\widetilde{\mathbf M}(q^{3n}) \mathbf P_{n+1})\nonumber\\
 &=(\mathbf L(q^{3n}) \widetilde{\mathbf M}(q^{3n})) \mathbf P_{n+1}\nonumber\\
 &= (\mathbf D(q^{3n})\mathbf L(q^{3n+3})) \mathbf P_{n+1}  \qquad ({\rm by}\ \eqref{eq:scaling})\nonumber\\
 &=  \mathbf D(q^{3n})(\mathbf L(q^{3n+3}) \mathbf P_{n+1})
 \nonumber\\
 &=  \mathbf D(q^{3n})  \mathbf Z_{n+1},
\end{align}
as required.
\end{proof}

\begin{lemma}\label{lem:boundary}
Let $\mathbf Z_n$ be defined by \eqref{h-38}. Then,  
\begin{equation}\label{eq:Zboundary}
\mathbf Z_n=
(-qH(q^9),\ q^{-1}G(q^9)-qH(q^9),\ q^{-1}G(q^9))^{\mathsf T}
+O(q^{3n-1}).
\end{equation}
\end{lemma}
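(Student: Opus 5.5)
The plan is to get each component of $\mathbf Z_n$ as an explicit $q$-series in which $n$ appears only through a tail that is $O(q^{3n})$. The limits should then be recognisable as the Rogers--Ramanujan functions $G(q^9)$ and $H(q^9)$, via \eqref{eq:intro-RR1}--\eqref{eq:intro-RR2} with $q\mapsto q^9$. The starting point is the residue representation \eqref{eq:star-residue-normalization} from the proof of Lemma~\ref{lem:starrec}. In it, $q^{3n(n+1)}C_n^*$ and $q^{3n(n+1)}A_n^*$ are $\E(q^{-3};q^{-3})_n$ times $\mathscr L_*$ of the explicit rational functions $(1-q^{-9n-9}/t^2)K_n(t;q^{-1})$ and $I_n(t;q^{-1})$.

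First I expand the theta factor by the Jacobi triple product:
\[
\Psi_*(t)=\frac{1}{J_9}\sum_{k\in\mathbb Z}(-1)^kq^{9\binom{k+1}{2}}t^{-k}\quad\text{(up to the standard normalisation)}.
\]
For each monomial $t^{-k}$, the finite sum of residues at the $2n+2$ poles $a_{n,j},b_{n,j}$ (respectively $\widehat a_{n,j},\widehat b_{n,j}$) equals minus the residues at $0$ and $\infty$. I would check that these are the only finite nonzero poles, using that the numerator $(\mathfrak q^9/t;\mathfrak q^9)_n$ only produces poles at $t=0$. Those two residues are coefficient extractions from the Laurent expansions of $K_n$ and $I_n$. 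Geometric-series expansion of the denominators $(\mathfrak q^{3n+4}/t,\mathfrak q^{3n+5}/t;\mathfrak q^3)_{n+1}$ and the numerator by the $q$-binomial theorem gives them as finite double sums of Gaussian polynomials. In effect, this step rewrites the divided differences \eqref{eq:Cstar}--\eqref{eq:Astar} as constant terms. The $\tfrac12$ and the pairing $a_{n,j}\leftrightarrow b_{n,j}$ are already accounted for in \eqref{eq:star-K-residue}.

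Next I multiply by the normalisation $\E(-1)^nq^{-9\binom n2}(q^3;q^3)_n$ and reorganise the sum over $k$. The goal is to exhibit $C_n^*$ as
\[
\frac{J_9}{J_3}\cdot\frac{(q^3;q^3)_n}{J_9}\sum_{k}(-1)^kq^{\text{quadratic}(k)}\qb{\cdot}{\cdot}_{q^9}\ \text{-type finite sums}.
\]
As $n\to\infty$ these finite sums should converge to $(q^3;q^3)_\infty^{-1}$ times a Rogers--Ramanujan series in $q^9$. Equivalently, they are the finite Schur/Andrews polynomials whose limits are $G(q^9)$ and $H(q^9)$, and the factor $J_3$ cancels. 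I expect the following leading identifications:
\begin{itemize}
\item $C_n^*\to G(q^9)$;
\item $q^{3n}A_n^*\to -q^2H(q^9)$;
\item $B_n^*=C_n^*+q^{3n+4}A_{n+1}^*\to G(q^9)-q^2H(q^9)$.
\end{itemize}
Applying $\mathbf L(q^{3n})$ then gives \eqref{eq:Zboundary}, where the first entry $-qH(q^9)$ carries the stated extra power of $q$.

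Before attempting the general computation I would sanity-check these normalisations against the small cases \eqref{eq:starinitial}, namely $C_0^*=\E X_3$ and $A_1^*=q^{-4}\E(X_2-X_3)$, expanded to a few orders.

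The main obstacle is the uniform error bound $O(q^{3n})$ in every component, in particular for $q^{3n}A_n^*$. That term carries the prefactor $(1-q^{1-3n})$ with a large negative power, so a delicate cancellation against $q^{3n}$ is needed. My approach is to:
\begin{itemize}
\item write each finite Gaussian sum as its limit minus a tail;
\item bound the tail's $q$-order from below using a completed-square lower bound for the exponents, in the spirit of the argument for \eqref{3-8};
\item exploit positivity of Gaussian coefficients and of $(q^3;q^3)_j^{-1}$ in that bound.
\end{itemize}
If the direct bound loses a constant shift, I would absorb it by using the recurrence \eqref{eq:tilde}, which shifts $n$ by one and multiplies by entries that are $O(1)$. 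This lets any bound of the form $O(q^{3n-c})$ be upgraded to the sharp $O(q^{3n-1})$ stated in the lemma.
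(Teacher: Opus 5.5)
\textbf{There is a genuine gap.} The only nontrivial content of the lemma is that the normalized divided differences tend to $G(q^9)$ and $H(q^9)$, with an error uniform in $n$. Your proposal leaves exactly this at ``I expect'' and ``should converge''.

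Rewriting $\mathscr L_*$ as minus the residues at $t=0$ and $t=\infty$ is legitimate, but it only re-encodes the same divided difference. Because $z=t+q^{-9n-9}/t$ maps both ends of the $t$-plane to $z=\infty$, it is the residue at $z=\infty$, namely
\[
\mathcal D_{9,c}[\xi_{n,0},\ldots,\xi_{n,n}]=\sum_{k\ge0}t_{n+k}(c)\,\mathsf h_k(\xi_{n,0},\ldots,\xi_{n,n}),\qquad t_m(c):=[z^m]\mathcal D_{9,c}(z),
\]
which is the paper's starting point. After the bilateral theta sum you no longer have finite Gaussian sums. Nothing in your plan explains where a Rogers--Ramanujan series in $q^9$ comes from, and the step ``reorganise the sum over $k$'' is where the entire proof would have to happen.

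The missing ingredient is an asymptotic for the Taylor coefficients of $\prod_{k\ge0}(1-zq^{9k}+cq^{18k})$ when $c=\gamma q^{-9m}$. Put
\[
d_m(\gamma):=(-1)^mq^{-9\binom m2}(q^9;q^9)_m\,t_m(\gamma q^{-9m}),\qquad
\mathscr B(\gamma):=\sum_{j\ge0}\frac{\gamma^jq^{9j(j-1)}}{(q^9;q^9)_j}.
\]
The paper imports from \cite[Lemma~8.4]{Xia} that $d_m(q^s)=\mathscr B(q^s)+O(q^{9(m+1)+s})$. Rogers--Ramanujan then gives $\mathscr B(q^9)=G(q^9)$ and $\mathscr B(q^{18})=H(q^9)$, so the $k=0$ term produces the main term. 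The $k\ge1$ terms are disposed of by a simple observation: every node has order at least $4-6n$ (resp.\ $8-6n$), so the $k$th term has order at least $3nk+4k+9\binom k2$ (resp.\ $3nk+8k+9\binom k2$). You need this estimate, or an actually computed constant-term identity proving something equivalent. Checking small cases against \eqref{eq:starinitial} cannot replace a statement uniform in $n$.

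\textbf{Your predicted normalizations are also wrong.} Since $q^{3n}(1-q^{1-3n})=q^{3n}-q$, the correct limit is $q^{3n}A_n^*=-qH(q^9)+O(q^{3n})$, not $-q^2H(q^9)$. Your value contradicts your own limit for $B_n^*$: because $q^{3n+4}A_{n+1}^*=q\cdot q^{3n+3}A_{n+1}^*$, it would give $G(q^9)-q^3H(q^9)$. Moreover, $\mathbf L(q^{3n})$ puts no extra power of $q$ on the first component.

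The same computation shows there is no ``delicate cancellation''. The prefactor $q^{3n}-q$ has order $1$, and what is needed is only that the remaining normalized divided difference equals $H(q^9)+O(q^{3n-1})$. That is the same kind of estimate as for $C_n^*$.

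Your idea of sharpening an $O(q^{3n-c})$ bound through the recurrence does work in the scaled form of Lemma~\ref{lem:scaling}. The limit vector has the shape $(a,a+b,b)^{\mathsf T}$, which $\mathbf D(0)$ fixes, and $\mathbf D(x)-\mathbf D(0)=O(x)$. However, it presupposes convergence to the correct limit vector, which is exactly the part you have not supplied.
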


\begin{proof}
Let $\mathsf h_k(z_0,\ldots,z_n)$ be the complete homogeneous symmetric
polynomial. Explicitly, for $k\ge0$,
\[
\mathsf h_k(z_0,\ldots,z_n)
:=\sum_{\substack{i_0,\ldots,i_n\ge0\\i_0+\cdots+i_n=k}}
 z_0^{i_0}\cdots z_n^{i_n}.
\]
For distinct nodes and an auxiliary variable $u$, comparison of the
coefficients of $u^{n+k}$ in
\[
\sum_{j=0}^n\frac{1}{(1-uz_j)\prod_{\ell\ne j}(z_j-z_\ell)}
=\frac{u^n}{\prod_{j=0}^n(1-uz_j)}
\]
gives
\[
\sum_{j=0}^n\frac{z_j^{n+k}}{\prod_{\ell\ne j}(z_j-z_\ell)}
=\mathsf h_k(z_0,\ldots,z_n).
\]
Thus the $n$th divided difference of the monomial $z^{n+k}$ is
$\mathsf h_k(z_0,\ldots,z_n)$.

For the coefficient estimate needed below, put
\[
 t_m(c):=[z^m]\mathcal D_{9,c}(z),\qquad
 d_m(\gamma):=(-1)^mq^{-9\binom m2}(q^9;q^9)_m
 t_m(\gamma q^{-9m}),\qquad (m\geq 0)
\]
and
\[
 \mathscr B(\gamma):=\sum_{j\ge0}
 \frac{\gamma^jq^{9j(j-1)}}{(q^9;q^9)_j}.
\]

The coefficient estimate in \cite[Lemma 8.4]{Xia} implies
that, for every positive integer \(s\),
\begin{equation}\label{eq:dm}
 d_m(q^s)=\mathscr B(q^s)+O(q^{9(m+1)+s}),
 \qquad d_m(q^s)\in\mathbb Z[[q]].
\end{equation}
To explain the error bound, first note that the constant
term in \(\gamma\) of
\(d_m(\gamma)-\mathscr B(\gamma)\) vanishes.
Indeed, \eqref{eq:kernel-unified} and Euler's expansion  \cite[II.2, p. 354]{GR}  
\[
(z;q)_\infty =\sum_{n=0}^\infty \frac{(-1)^n q^{n(n-1)/2}}{(q;q)_n} z^n
\]
 give
\[
t_m(0)
=[z^m](z;q^9)_\infty
=\frac{(-1)^m q^{9\binom m2}}{(q^9;q^9)_m},
\]
and hence \(d_m(0)=1=\mathscr B(0)\).
For \(j\ge1\), the coefficient estimate in the cited result gives
\[
[\gamma^j]\bigl(d_m(\gamma)-\mathscr B(\gamma)\bigr)
\in q^{9(m+1)}\mathbb Z[[q^9]].
\]
After substituting \(\gamma=q^s\), the contribution of
the \(j\)th term therefore belongs to
\[
q^{9(m+1)+sj}\mathbb Z[[q]].
\]
Since \(sj\ge s\) for \(j\ge1\), summing these contributions
gives the error term \(O(q^{9(m+1)+s})\).
This argument applies to every positive integer \(s\).

We next expand \(C_n^*\) using its definition in
\eqref{eq:Cstar}. Since
\[
\mathcal D_{9,c}(z)=\sum_{m\ge0}t_m(c)z^m,
\]
the monomial divided-difference identity established above gives
\[
\mathcal D_{9,q^{9-9n}}[\xi_{n,0},\ldots,\xi_{n,n}]
=
\sum_{k\ge0}
t_{n+k}(q^{9-9n})
\mathsf h_k(\xi_{n,0},\ldots,\xi_{n,n}).
\]
Here the terms of degree less than \(n\) have zero
\(n\)th divided difference. By the definition of \(d_m\),
\[
t_{n+k}(q^{9-9n})
=
\frac{(-1)^{n+k}q^{9\binom{n+k}{2}}}
{(q^9;q^9)_{n+k}}\,
d_{n+k}(q^{9+9k}).
\]
Substituting this expression into \eqref{eq:Cstar} and using
\[
\binom{n+k}{2}-\binom n2=nk+\binom k2
\]
yields
\begin{equation}\label{eq:Cexpand}
	C_n^*
	=
	\mathcal E_9(q^3;q^3)_n
	\sum_{k\ge0}
	\frac{(-1)^kq^{9nk+9\binom k2}
		d_{n+k}(q^{9+9k})}
	{(q^9;q^9)_{n+k}}\,
	\mathsf h_k(\xi_{n,0},\ldots,\xi_{n,n}).
\end{equation}

From
\eqref{v-4}, we see that 
both exponents of $\xi_{n,j}$ are at least \(4-6n\), and this bound is
attained by the first term of \(\xi_{n,0}\).
Every monomial in
\(\mathsf h_k(\xi_{n,0},\ldots,\xi_{n,n})\)
has total degree \(k\) in the nodes. Hence
\[
\mathsf h_k(\xi_{n,0},\ldots,\xi_{n,n})
=O\bigl(q^{k(4-6n)}\bigr).
\]
By \eqref{eq:dm},
\(d_{n+k}(q^{9+9k})\in\mathbb Z[[q]]\), and
\((q^9;q^9)_{n+k}^{-1}\) also contains no negative
powers of \(q\). Therefore the \(k\)th summand in
\eqref{eq:Cexpand} has no terms of degree less than
\[
9nk+9\binom k2+k(4-6n)
=3nk+4k+9\binom k2.
\]
The prefactor \(\mathcal E_9(q^3;q^3)_n\) contains no
negative powers and does not lower this bound.

For \(k\ge1\), the bound is at least \(3n+4\) and tends
to infinity with \(k\). Thus the sum of all terms with
\(k\ge1\) is \(O(q^{3n+4})\).
Since \(\mathsf h_0=1\), retaining the \(k=0\) term gives
\[
C_n^*
=
\mathcal E_9
\frac{(q^3;q^3)_n}{(q^9;q^9)_n}
d_n(q^9)
+O(q^{3n+4}).
\]
Also
\[
\E\frac{(q^3;q^3)_n}{(q^9;q^9)_n}
=\frac{(q^{9n+9};q^9)_\infty}{(q^{3n+3};q^3)_\infty}
=1+O(q^{3n+3}).
\]
The $k=0$ term of~\eqref{eq:Cexpand} and \eqref{eq:dm}
therefore give
 \begin{align}
C_n^*=\mathscr B(q^9)+O(q^{3n+3}). \label{h-37}
 \end{align} 
The Rogers--Ramanujan identity says
\[\mathscr B(q^9)=G(q^9).\]
It follows from    \eqref{h-37}
 and the above identity 
 	 that 
 \begin{align}
 	C_n^* &=G(q^9)+O(q^{3n+3}) \quad (n\geq 0). \label{h-35}
 \end{align}

Similarly, $A_n^*$ has the expansion
\[
A_n^*=\E(q^3;q^3)_n(1-q^{1-3n})\sum_{k\ge0}
\frac{(-1)^kq^{9nk+9\binom k2}d_{n+k}(q^{18+9k})}
 {(q^9;q^9)_{n+k}}\mathsf h_k(\eta_{n,0},\ldots,\eta_{n,n}).
\]
Now the smallest node exponent is $8-6n$. 
After multiplication
by $q^{3n}$, every $k\ge1$ term has order at least $3n+9$.
The $k=0$ term is
\[
q^{3n}(1-q^{1-3n})\{1+O(q^{3n+3})\}
\{H(q^9)+O(q^{9n+27})\},
\]
because $\mathscr B(q^{18})=H(q^9)$. Therefore, 
 \begin{align}
	q^{3n}A_n^* &=-qH(q^9)+O(q^{3n})\quad(n\ge1).\label{h-36}
\end{align}

Finally, ~\eqref{eq:Bstar}, \eqref{h-35}
 and \eqref{h-36} give
  $$B_n^*=G(q^9)-q^2H(q^9)+O(q^{3n+3}),$$
   from which with \eqref{h-35}
   and \eqref{h-36}, we deduce that  
 \eqref{eq:Zboundary} is true for $n\geq 1$.
 
 For \(n=0\), the initial values give
 \[
 \mathbf Z_0
 =\mathcal E_9(X_1,q^{-1}X_2,q^{-1}X_3)^{\mathsf T}.
 \]
 Since \(\mathcal E_9X_i\) and \(G(q^9)\) all have
 constant term \(1\), the difference between this vector
 and the stated main term belongs to
 \(\mathbb C[[q]]^3\), and hence is \(O(q^{-1})\).
 Thus the estimate also holds at \(n=0\).
\end{proof}


\begin{proposition}\label{prop:second}
	The six sums satisfy
	\begin{equation}\label{eq:second}
		\begin{aligned}
			G(q^9)F_1-qH(q^9)F_2
			&=\mathcal E_9X_1,\\
			G(q^9)F_3-q^2H(q^9)F_4
			&=\mathcal E_9X_2,\\
			G(q^9)F_5-q^4H(q^9)F_6
			&=\mathcal E_9X_3.
		\end{aligned}
	\end{equation}
\end{proposition}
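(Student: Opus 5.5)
The plan is to apply the uniqueness Lemma~\ref{lem:unique} to the recurrence \eqref{eq:Drec}, using three solutions of it: $\mathbf U_n$, $\mathbf V_n$ (Lemma~\ref{lem:common-vector}) and $\mathbf Z_n$ (Lemma~\ref{lem:scaling}).

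Since \eqref{eq:Drec} is linear, any combination with $n$-independent coefficients in $\mathbb C((q))$ is again a solution. Take
\[
\mathbf Y_n:=q^{-1}G(q^9)\,\mathbf V_n-qH(q^9)\,\mathbf U_n .
\]
By \eqref{eq:UVboundary},
\[
\mathbf Y_n=q^{-1}G(q^9)(0,1,1)^{\mathsf T}-qH(q^9)(1,1,0)^{\mathsf T}+O(q^{3n-1}),
\]
and this main term is exactly
\[
\bigl(-qH(q^9),\ q^{-1}G(q^9)-qH(q^9),\ q^{-1}G(q^9)\bigr)^{\mathsf T},
\]
the main term of $\mathbf Z_n$ in \eqref{eq:Zboundary}. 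Hence $\ord_q(\mathbf Y_n-\mathbf Z_n)\ge 3n-1\to\infty$.

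Both $\mathbf Y_n$ and $\mathbf Z_n$ satisfy the recurrence with the same matrices $\mathbf D(q^{3n})$. These matrices have entries in $\mathbb C[[q]]$ for $n\ge0$, and both vectors lie in the fixed Laurent shift $q^{-1}\mathbb C[[q]]^3$. Lemma~\ref{lem:unique} (in its shifted form) therefore gives $\mathbf Y_n=\mathbf Z_n$ for all $n$, and in particular $\mathbf Y_0=\mathbf Z_0$.

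Now read off components using \eqref{eq:UVzero} and the initial vector in Lemma~\ref{lem:starrec}, which gives $\mathbf Z_0=\E(X_1,q^{-1}X_2,q^{-1}X_3)^{\mathsf T}$.

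The first component gives $q^{-1}G(q^9)\,qF_1-qH(q^9)F_2=\E X_1$, which is the first line of \eqref{eq:second}.

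The second component gives $q^{-1}G(q^9)F_3-qH(q^9)F_4=q^{-1}\E X_2$. Multiplying by $q$ yields the second line.

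The third component gives $q^{-1}G(q^9)F_5-qH(q^9)\,q^2F_6=q^{-1}\E X_3$. Multiplying by $q$ yields the third line.

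The main obstacle is justifying Lemma~\ref{lem:unique} in this setting. Its hypotheses need the matrices to have entries in $\mathbb C[[q]]$, which holds since $x=q^{3n}$ with $n\ge0$. They also need the difference vectors to live in a common shifted lattice with order tending to infinity, which the $O(q^{3n-1})$ bounds supply. No inversion of $\mathbf D$ is needed, since the argument iterates forward: $\mathbf Y_0-\mathbf Z_0=\mathbf D(1)\cdots\mathbf D(q^{3n-3})(\mathbf Y_n-\mathbf Z_n)$, and the right side has order at least $3n-1$ for every $n$.
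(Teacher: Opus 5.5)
Your proposal is correct and follows the paper's proof. The paper forms the same combination $-qH(q^9)\mathbf U_n+q^{-1}G(q^9)\mathbf V_n$, matches it against \eqref{eq:Zboundary} with an $O(q^{3n-1})$ discrepancy, applies Lemma~\ref{lem:unique}, and reads off the $n=0$ components from \eqref{eq:UVzero} and $\mathbf Z_0=\mathcal E_9(X_1,q^{-1}X_2,q^{-1}X_3)^{\mathsf T}$, multiplying the last two by $q$; your forward-iteration argument simply unpacks that lemma.
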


\begin{proof}
By~\eqref{eq:Drec},  
\eqref{eq:UVboundary}, \eqref{eq:Zboundary}
 and Lemma \ref{lem:scaling}, the two vectors
  $-qH(q^9)\mathbf U_n+q^{-1}G(q^9)\mathbf V_n$ and $\mathbf Z_n$  
satisfy the same recurrence and differ by a vector whose least
exponent tends to infinity.
Lemma~\ref{lem:unique} gives their equality.
Indeed, their difference is $O(q^{3n-1})$, and the entries of
$\mathbf D(q^{3n})$ belong to $\mathbb C[[q]]$.
At zero, Lemma~\ref{lem:starrec} gives
$$\mathbf Z_0=\E(X_1,q^{-1}X_2,q^{-1}X_3)^{\mathsf T}.$$
Insert~\eqref{eq:UVzero} and multiply the last two components by $q$.
The result is~\eqref{eq:second}.
\end{proof}

For $1\le r\le22$, define
\[
 \mathcal J_r:=(q^r,q^{45-r};q^{45})_\infty.
\]  The following product identities are verified with
the Frye--Garvan Maple package \texttt{thetaids}~\cite{FG}.

\begin{proposition}\label{prop:products}
	For $1\le i\le6$, let $R_i$ denote the explicit
	product expression on the right-hand side of the
	corresponding identity in \eqref{eq:R1}--\eqref{eq:R6}.
Then
\begin{align}
G(q^9)R_1-qH(q^9)R_2&=\E X_1,&
qH(q)R_1+G(q)R_2&=P_1^2+qP_2P_3,\label{eq:prod12}\\
G(q^9)R_3-q^2H(q^9)R_4&=\E X_2,&
H(q)R_3+G(q)R_4&=P_2^2+P_1P_3,\label{eq:prod34}\\
G(q^9)R_5-q^4H(q^9)R_6&=\E X_3,&
H(q)R_5+q^2G(q)R_6&=P_1P_2-qP_3^2.\label{eq:prod56}
\end{align}
\end{proposition}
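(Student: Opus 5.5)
The plan is to reduce each of the six displayed relations to an identity among modular forms of a fixed level, and then verify it by a Sturm-bound argument; this is the method implemented in \texttt{thetaids}. Every quantity involved is a generalized eta quotient or a linear combination of such. By the Rogers--Ramanujan identities \eqref{eq:intro-RR1}--\eqref{eq:intro-RR2},
\[
G(q)=\frac{1}{(q,q^4;q^5)_\infty},\qquad H(q)=\frac{1}{(q^2,q^3;q^5)_\infty}.
\]
Hence $G(q^9)$ and $H(q^9)$ are quotients of $J_{a,45}$ and $J_{45}$. Likewise $\E X_i$, $P_i$ and each bracket $\br{a,b,c,d}$ are quotients of $J_{a,45}$, $J_{a,9}$, $J_3$, $J_9$ and $J_{45}$. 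Multiplying each identity by a common normalizing factor (a suitable power of $q$ times an eta quotient such as $J_3J_{45}^{k}$) turns every term into a generalized eta quotient in the sense of Robins. These are modular functions on $\Gamma_1(45)$.

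First, for each of the six identities, I will divide through by one term. This writes it as $1+\sum_k c_k g_k=0$, where the $g_k$ are generalized eta quotients. I then check Robins' criteria, namely the congruence conditions on the exponent sums modulo $45$, to confirm that each $g_k$ is a modular function on $\Gamma_1(45)$.

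Next, at each cusp of $\Gamma_1(45)$ I compute the order of every $g_k$ using the standard formula for $\ord$ of generalized eta quotients. This gives a lower bound $B$ for the total pole order of $1+\sum c_kg_k$ away from $\infty$. By the valence formula, if the $q$-expansion at $\infty$ vanishes to order greater than $B$ plus the pole order at $\infty$, the function is identically zero.

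Finally, I verify the required number of coefficients by direct expansion. The cusp-order bookkeeping is mechanical; the only delicate point is that $\Gamma_1(45)$ has many cusps, so the bound must be computed carefully. This is exactly what \texttt{thetaids} automates, and I would rely on it.

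I expect the main obstacle to be the right-hand identities involving $P_1^2+qP_2P_3$ and the other level-$9$ quadratics. There, terms with $G(q)$ and $H(q)$ mix factors of level $5$ and level $45$, and the resulting Sturm bound may be in the hundreds of coefficients. To avoid this, I would try to first eliminate $G(q)$ and $H(q)$ using the known modulus-$45$ relation
\[
G(q)G(q^9)+q^2H(q)H(q^9)=\Delta,
\]
the Ramanujan forty-identity for the combination $\Delta$. This should keep the degrees small before invoking the valence formula.
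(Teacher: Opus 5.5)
Your proposal is essentially the paper's proof: the paper rewrites every term as $c\,q^{k}\prod_{r=1}^{22}\mathcal J_r^{e_r}$ with $\mathcal J_r=(q^r,q^{45-r};q^{45})_\infty$, and lets \texttt{provemodfuncidBATCH} in \texttt{thetaids} carry out the level-$45$ valence-formula check you describe.

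The obstacle you anticipate does not arise. Since $5\mid 45$, $G(q)$ and $H(q)$ are already level-$45$ generalized eta quotients, and the reported bounds are only $50$ and $60$. So the detour through $\Delta$ is unnecessary, and as stated it would not work anyway, because one relation cannot eliminate both $G(q)$ and $H(q)$.
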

\begin{proof}
We first express all products using factors with base $q^{45}$.
For a list $A$,  let $\mathcal J(A):=\prod_{a\in A}\mathcal J_a$, with
multiplicity. Use the lists
\begin{align*}
\mathcal A&=(3,6,12,15,21),&
\mathcal C&=(3,6,9,12,15,18,21),\\
\mathcal X&=(1,8,10,17,19),&
\mathcal Y&=(2,7,11,16,20),\\
\mathcal Z&=(4,5,13,14,22),&
\mathcal G&=(1,4,6,9,11,14,16,19,21),\\
\mathcal H&=(2,3,7,8,12,13,17,18,22).
\end{align*}
Then
\[
\E=\mathcal J(\mathcal A)^{-1},\quad
\frac{J_{45}}{J_3}=\mathcal J(\mathcal C)^{-1},\quad
(X_1,X_2,X_3)=(\mathcal J(\mathcal X),\mathcal J(\mathcal Y),\mathcal J(\mathcal Z)),
\]
and
$G(q)=\mathcal J(\mathcal G)^{-1}$,
$H(q)=\mathcal J(\mathcal H)^{-1}$,
$G(q^9)=\mathcal J_9^{-1}$, and
$H(q^9)=\mathcal J_{18}^{-1}$.
Every term of the six differences in the proposition is now
explicitly $c_jq^{k_j}\prod_{r=1}^{22}\mathcal J_r^{e_{j,r}}$.

We now submit the six resulting generalized eta-product differences,
at level $45$, to \texttt{provemodfuncidBATCH} in the Frye--Garvan
Maple package \texttt{thetaids}~\cite{FG}.  The routine returns the
verification bounds $50$ for the three left relations and $60$ for the
three right relations in \eqref{eq:prod12}--\eqref{eq:prod56}, and
proves that all six differences vanish. Thus the six displayed product
identities follow. Since this modular verification is standard and is
not the focus of the present paper, we refer to~\cite{FG} for the
underlying criterion.
\end{proof}

We are now ready to prove Theorem~\ref{thm:main}.

\begin{proof}[Proof of Theorem~\ref{thm:main}]
At this point, we invoke the three dual evaluations proved in
\cite[Theorem 1.2]{Xia}:
\begin{equation}\label{eq:dual}
 \N(0,0)=P_1^2+qP_2P_3,\quad
 \N(-1,3)=P_2^2+P_1P_3,\quad
 \N(1,0)=P_1P_2-qP_3^2.
\end{equation}
For $1\leq i \leq 6$, let $\epsilon_i=F_i-R_i$. Equations \eqref{eq:dual}
and Propositions~\ref{prop:schur}, \ref{prop:second}, and
\ref{prop:products} give homogeneous systems for the three pairs.

For the pair $(F_1,F_2)$,
\[
\begin{pmatrix}G(q^9)&-qH(q^9)\\qH(q)&G(q)\end{pmatrix}
\binom{\epsilon_1}{\epsilon_2}=0.
\]
Its determinant is $\Delta=G(q)G(q^9)+q^2H(q)H(q^9)$,
a power series with constant
term one. Hence $\epsilon_1=\epsilon_2=0$, proving
\eqref{eq:R1} and \eqref{eq:R2}.

For the pair  $(F_3,F_4)$,
\[
\begin{pmatrix}G(q^9)&-q^2H(q^9)\\H(q)&G(q)\end{pmatrix}
\binom{\epsilon_3}{\epsilon_4}=0.
\]
The determinant is again $\Delta$, so \eqref{eq:R3} and
\eqref{eq:R4} follow.

Finally, for the pair $(F_5,F_6)$,
\[
\begin{pmatrix}G(q^9)&-q^4H(q^9)\\H(q)&q^2G(q)\end{pmatrix}
\binom{\epsilon_5}{\epsilon_6}=0.
\]
The determinant $q^2\Delta$ is nonzero. Multiplication by the
adjugate, and the absence of zero divisors in $\mathbb C[[q]]$,
give \eqref{eq:R5} and \eqref{eq:R6}.
\end{proof}

\section{The four asymmetric reflection identities}\label{sec:asymmetric}
\label{sec:airyfamilies}

To prove Theorem~\ref{thm:asym}, we  first establish identities for
$q$-Airy functions and solve a  linear system. The same
identities will also be used for the reflected sums with cube roots of unity.

For $0<|p|<1$, $X\in\mathbb C$, and $z\in\mathbb C^*$, define
\[
\Ai_p(X):=\sum_{r\ge0}\frac{p^{r(r-1)/2}X^r}{(p^2;p^2)_r} 
\]
and 
\[
\vartheta_p^+(z):=(-z,-p/z,p;p)_\infty
=\sum_{k\in\mathbb Z}p^{k(k-1)/2}z^k,
\]
where the equality follows from Jacobi's triple product identity.

By the definition of $\Ai_p(X)$,
\begin{align*}
	\Ai_p(X)-\Ai_p(p^2X)
	&=\sum_{r\ge1}
	\frac{p^{r(r-1)/2}(1-p^{2r})X^r}{(p^2;p^2)_r}\\
	&=\sum_{r\ge1}
	\frac{p^{r(r-1)/2}X^r}{(p^2;p^2)_{r-1}}
	=X\Ai_p(pX).
\end{align*}
Therefore, the function $\Ai_p$ satisfies
\begin{equation}\label{Airec}
	\Ai_p(X)-\Ai_p(p^2X)=X\Ai_p(pX).
\end{equation}
To record the normalization needed below, set
\[
\mathcal W_p(X)
:=\Ai_p(X)\Ai_p(-pX)+\Ai_p(-X)\Ai_p(pX).
\]
Applying \eqref{Airec} at $X$ and $-X$ gives
\begin{align*}
	\mathcal W_p(pX)&=\Ai_p(pX)\Ai_p(-p^2X)+\Ai_p(-pX)\Ai_p(p^2X)\\
	&=\Ai_p(pX)\{\Ai_p(-X)+X\Ai_p(-pX)\}\\
	&\quad+\Ai_p(-pX)\{\Ai_p(X)-X\Ai_p(pX)\}\\
	&=\mathcal W_p(X).
\end{align*}
Hence $\mathcal W_p(X)=\mathcal W_p(p^nX)$ for every $n\ge0$.
Since $|p|<1$, letting $n\to\infty$ and using $\Ai_p(0)=1$ yields
\begin{equation}\label{eq:AiryWronskian}
	\mathcal W_p(X)=\mathcal W_p(0)=2.
\end{equation}

For the Airy calculations, take $0<q<1$, set $\rho=q^{1/4}$,
and work in $\mathbb C((\rho))$, where $q=\rho^4$. Put
\[
 \mathsf C_\pm(\rho):=\Ai_{\rho^2}(\pm\rho),\qquad
 \mathsf D_\pm(\rho):=\Ai_{\rho^2}(\pm\rho^3),
\]
and, for $m\ge-1$, define
\[
 u_m:=(-1)^mq^{-m^2/4}\Ai_{q^{1/2}}(q^{m/2+3/4}),\qquad
 v_m:=q^{-m^2/4}\Ai_{q^{1/2}}(-q^{m/2+3/4}).
\]

\begin{lemma}
\label{lem:Airy-basis}
The sequences $u_m$ and $v_m$
satisfy \eqref{eq:hrec}. 
In addition, 
\begin{equation}\label{basis}
	u_m=\mathsf D_+(\rho)\mathsf A_m-\rho^{-1}\mathsf C_+(\rho)\mathsf B_m,
	\qquad
	v_m=\mathsf D_-(\rho)\mathsf A_m+\rho^{-1}\mathsf C_-(\rho)\mathsf B_m,
\end{equation}
and
\begin{equation}\label{ainv}
	\mathsf A_m=\frac{\mathsf C_-(\rho)u_m+\mathsf C_+(\rho)v_m}{2},
	\qquad
	\mathsf B_m=\frac{\rho(\mathsf D_+(\rho)v_m-\mathsf D_-(\rho)u_m)}{2}.
\end{equation}
\end{lemma}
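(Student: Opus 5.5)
The plan has three steps: show directly that $u_m$ and $v_m$ satisfy \eqref{eq:hrec}, check \eqref{basis} at the two initial indices, and invert the resulting $2\times2$ system using the Wronskian \eqref{eq:AiryWronskian}.

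\emph{Recurrence.} With $p=q^{1/2}=\rho^2$ and $X=q^{m/2+3/4}$ we have $p^2X=q^{(m+2)/2+3/4}$ and $pX=q^{(m+1)/2+3/4}$. Hence \eqref{Airec} reads
\[
\Ai_{q^{1/2}}(q^{m/2+3/4})-\Ai_{q^{1/2}}(q^{(m+2)/2+3/4})=q^{m/2+3/4}\Ai_{q^{1/2}}(q^{(m+1)/2+3/4}).
\]
Replace $m$ by $m-2$ and insert the normalizations $(-1)^mq^{-m^2/4}$. Then
\[
u_{m-2}=(-1)^m q^{-(m-2)^2/4}\Ai(\cdot_{m-2}),\qquad
u_{m-1}=(-1)^{m-1}q^{-(m-1)^2/4}\Ai(\cdot_{m-1}),\qquad
u_m=(-1)^m q^{-m^2/4}\Ai(\cdot_m).
\]
Multiply the Airy relation by $(-1)^mq^{-m^2/4}$ and check the exponents. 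Since $(m-2)^2/4-m^2/4=1-m$, the left side becomes $q^{1-m}u_{m-2}-u_m$. The right side has exponent $(m-2)/2+3/4-m^2/4=-(m-1)^2/4$, which is exactly the normalization of $u_{m-1}$, and the sign $(-1)^m=-(-1)^{m-1}$ produces $-u_{m-1}$. So $u_m=u_{m-1}+q^{1-m}u_{m-2}$. For $v_m$ the same computation with $X$ replaced by $-X$ moves the sign into the argument, and the absence of $(-1)^m$ again yields \eqref{eq:hrec}. This must hold for all $m\ge1$, since $u_{-1},v_{-1}$ are defined.

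\emph{Basis formula.} By the recurrence part of Lemma~\ref{lem:schur}, the sequences $\mathsf A_m,\mathsf B_m$ (with $\mathsf A_{-1}=0$, $\mathsf B_{-1}=1$) span the solution space of \eqref{eq:hrec}, and a solution is determined by its values at $m=-1,0$. It therefore suffices to verify \eqref{basis} at $m=0$ and $m=-1$.
\begin{itemize}
\item At $m=0$: $u_0=\Ai_{\rho^2}(\rho^3)=\mathsf D_+$ and $v_0=\Ai_{\rho^2}(-\rho^3)=\mathsf D_-$, matching $\mathsf A_0=1$, $\mathsf B_0=0$.
\item At $m=-1$: $u_{-1}=-q^{-1/4}\Ai_{\rho^2}(q^{1/4})=-\rho^{-1}\mathsf C_+$ and $v_{-1}=\rho^{-1}\mathsf C_-$, matching $\mathsf A_{-1}=0$, $\mathsf B_{-1}=1$.
\end{itemize}
This proves \eqref{basis}.

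\emph{Inversion.} The coefficient matrix of \eqref{basis} is
\[
\begin{pmatrix}\mathsf D_+&-\rho^{-1}\mathsf C_+\\ \mathsf D_-&\rho^{-1}\mathsf C_-\end{pmatrix},
\]
with determinant $\rho^{-1}(\mathsf D_+\mathsf C_-+\mathsf C_+\mathsf D_-)$. Apply \eqref{eq:AiryWronskian} with $p=\rho^2$ and $X=\rho$, so that $pX=\rho^3$:
\[
\mathcal W_{\rho^2}(\rho)=\Ai(\rho)\Ai(-\rho^3)+\Ai(-\rho)\Ai(\rho^3)=\mathsf C_+\mathsf D_-+\mathsf C_-\mathsf D_+=2.
\]
The determinant is therefore $2\rho^{-1}$, and Cramer's rule gives \eqref{ainv} directly:
\[
\mathsf A_m=\frac{\rho^{-1}(\mathsf C_-u_m+\mathsf C_+v_m)}{2\rho^{-1}},\qquad
\mathsf B_m=\frac{\mathsf D_+v_m-\mathsf D_-u_m}{2\rho^{-1}}.
\]

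The main obstacle is getting the sign and exponent bookkeeping right in the recurrence verification, in particular the $(-1)^m$ factor in $u_m$ and the quarter-power exponents. All quantities live in $\mathbb C((\rho))$, so no analytic issues arise.
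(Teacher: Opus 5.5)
Your proposal is correct and follows essentially the same route as the paper. You derive \eqref{eq:hrec} for $u_m,v_m$ from \eqref{Airec} with $p=q^{1/2}$; your shift $m\mapsto m-2$ is the same as the paper's choice $X=q^{m/2-1/4}$. You then match the initial values at $m=-1,0$ against the fundamental pair $(\mathsf A_m,\mathsf B_m)$ and invert using the Wronskian identity $\mathsf C_+(\rho)\mathsf D_-(\rho)+\mathsf C_-(\rho)\mathsf D_+(\rho)=2$, with your sign and exponent bookkeeping and the Cramer's-rule inversion checking out.
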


\begin{proof}
  Taking $p=q^{1/2}$ and $X=q^{m/2-1/4}$ in \eqref{Airec} gives
\[
\Ai_{q^{1/2}}(q^{m/2+3/4})
=\Ai_{q^{1/2}}(q^{m/2-1/4})
-q^{m/2-1/4}\Ai_{q^{1/2}}(q^{m/2+1/4}).
\]
Multiplying the above identity
 by  $(-1)^m q^{-m^2/4}$
yields
\[
u_m=u_{m-1}+q^{1-m}u_{m-2} .
\]
Setting  $p=q^{1/2}$ and $X=-q^{m/2-1/4}$ in \eqref{Airec} gives
\[
\Ai_{q^{1/2}}(-q^{m/2+3/4})
=\Ai_{q^{1/2}}(-q^{m/2-1/4})
+q^{m/2-1/4}\Ai_{q^{1/2}}(-q^{m/2+1/4}).
\]
Multiplying the above identity
by  $ q^{-m^2/4}$
 gives
\[
v_m=v_{m-1}+q^{1-m}v_{m-2}.
\]
Their two initial values are
\[
(u_{-1},u_0)
=(-\rho^{-1}\mathsf C_+(\rho),\mathsf D_+(\rho)),
\qquad
(v_{-1},v_0)
=(\rho^{-1}\mathsf C_-(\rho),\mathsf D_-(\rho)).
\]
Recall that
\[
(\mathsf A_{-1},\mathsf A_0)=(0,1),
\qquad
(\mathsf B_{-1},\mathsf B_0)=(1,0).
\]
Thus the sequences $\mathsf A_m$ and $\mathsf B_m$ form the normalized
fundamental pair for the
second-order recurrence \eqref{eq:hrec}. In particular, every
solution $\mathsf w$ is uniquely determined by
$(\mathsf w_{-1},\mathsf w_0)$ and satisfies
\[
\mathsf w_m=\mathsf w_0\mathsf A_m+\mathsf w_{-1}\mathsf B_m.
\]
Applying this uniqueness statement to $u_m$ and $v_m$ gives
\eqref{basis}.
By \eqref{eq:AiryWronskian}, specialized at
$p=q^{1/2}$ and $X=q^{1/4}$, we have
\[
\mathsf C_+(\rho)\mathsf D_-(\rho)+\mathsf C_-(\rho)\mathsf D_+(\rho)=2.
\]
Solving the system \eqref{basis} and using the above identity gives \eqref{ainv}.
\end{proof}

\begin{lemma}\label{Cvalue}
One has
\begin{equation*}
 \mathsf C_\pm(\rho)=\frac{G(q)}{J_5}\thp_{\rho^{10}}(\pm \rho).
\end{equation*}
\end{lemma}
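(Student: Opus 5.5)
The plan is to reduce the claim to two classical Rogers-type single-sum identities, one for each parity, and then match the products. First I unwind the definitions. With $p=\rho^2$ we have $p^2=q$, so
\[
\mathsf C_\pm(\rho)=\sum_{r\ge0}\frac{\rho^{r(r-1)}(\pm\rho)^r}{(q;q)_r}
=\sum_{r\ge0}\frac{(\pm1)^r q^{r^2/4}}{(q;q)_r}.
\]
By Jacobi's triple product in the form recorded before the lemma,
\[
\thp_{\rho^{10}}(\pm\rho)=\sum_{k\in\mathbb Z}\rho^{5k(k-1)}(\pm\rho)^k
=\sum_{k\in\mathbb Z}(\pm1)^k q^{(5k^2-4k)/4}.
\]
Both sides are therefore of the form $E\pm q^{1/4}O$, where $E$ and $O$ are ordinary power series in $q$. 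Since $q^{1/4}=\rho$ is independent of $\mathbb C((q))$ over that field, the lemma is equivalent to two identities in $\mathbb C[[q]]$: one for the even part and one for the odd part.

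Splitting $r=2m$ and $r=2m+1$ on the left, and $k=2j$ and $k=2j+1$ on the right, these two identities read
\begin{align*}
\sum_{m\ge0}\frac{q^{m^2}}{(q;q)_{2m}}
&=\frac{G(q)}{J_5}\sum_{j\in\mathbb Z}q^{5j^2-2j}
=\frac{G(q)}{J_5}(-q^3,-q^7,q^{10};q^{10})_\infty,\\
\sum_{m\ge0}\frac{q^{m^2+m}}{(q;q)_{2m+1}}
&=\frac{G(q)}{J_5}\sum_{j\in\mathbb Z}q^{5j^2+3j}
=\frac{G(q)}{J_5}(-q^{2},-q^{8},q^{10};q^{10})_\infty.
\end{align*}
The odd exponent comes from $(5(2j+1)^2-4(2j+1)-1)/4=5j^2+3j$, after removing the factor $q^{1/4}$. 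The left-hand sides are exactly Rogers' mod-$20$ identities, recorded by Slater as entries (79) and (94)/(99). Their product sides are known explicitly as $\frac{1}{(q;q)_\infty}$ times theta products of modulus $20$ (respectively $40$).

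The step I expect to be the main obstacle is showing that these known products agree with $G(q)(-q^3,-q^7,q^{10};q^{10})_\infty/J_5$ and $G(q)(-q^2,-q^8,q^{10};q^{10})_\infty/J_5$, respectively. I would do this by the quintuple product identity. Write $G(q)=1/(q,q^4;q^5)_\infty$ and $J_5/(q;q)_\infty=1/\prod_{5\nmid n}(1-q^n)$, so that each target becomes a single quotient of theta functions of modulus $20$. Then check these against Slater's products, or feed both sides to \texttt{thetaids} exactly as was done in Proposition~\ref{prop:products}.

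If I would rather avoid citing Slater, there is a self-contained alternative. Both sides satisfy the same first-order $q$-difference structure in the sign variable: replace $\pm\rho$ by a general $X$, use \eqref{Airec} for $\Ai_{\rho^2}$, and use the quasi-periodicity $\thp_p(pz)=z^{-1}\thp_p(z)$ for the theta function. I would then fix the constant $G(q)/J_5$ by comparing leading coefficients. In either route the final step is to recombine the even and odd parts with the signs $\pm$, which gives the stated formula for both $\mathsf C_+$ and $\mathsf C_-$ simultaneously.
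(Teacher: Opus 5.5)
Your main route is correct. It uses the same parity split as the paper, but it gets the two Rogers mod-$20$ identities by citation rather than by proof.

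\textbf{How the paper differs.} The paper derives the two identities itself. It substitutes $z=q^{-1}$ into the finite Schur identity and multiplies by $q^{M^2}$ or $q^{M^2+M}$. Letting $M\to\infty$ then gives the bilateral forms
\[
J_1\sum_{r\ge0}\frac{q^{r^2}}{(q;q)_{2r}}=\sum_{k\in\mathbb Z}\bigl(q^{15k^2-k}-q^{15k^2+19k+6}\bigr)
\]
and the odd analogue. It then recombines the two parities in $\rho$ and applies the quintuple product once, with $p=\rho^{10}$ and $z=-\rho$, to reach $\thp_{\rho^{10}}(\pm\rho)$. That argument is self-contained; yours is shorter but rests on the literature.

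\textbf{The matching step is easy.} The step you single out as the main obstacle is elementary once you start from Rogers' product forms. Using $(-a;q^{10})_\infty=(a^2;q^{20})_\infty/(a;q^{10})_\infty$ and $G(q)/J_5=1/(q,q^4,q^5;q^5)_\infty$, one checks directly that
\[
\frac{G(q)}{J_5}(-q^3,-q^7,q^{10};q^{10})_\infty=\frac{1}{(q;q^2)_\infty(q^4,q^{16};q^{20})_\infty}
\]
and
\[
\frac{G(q)}{J_5}(-q^2,-q^8,q^{10};q^{10})_\infty=\frac{(q^3,q^7,q^{10};q^{10})_\infty(q^4,q^{16};q^{20})_\infty}{(q;q)_\infty}.
\]
These are exactly Rogers' product sides for $\sum q^{m^2}/(q;q)_{2m}$ and $\sum q^{m^2+m}/(q;q)_{2m+1}$. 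So you need neither the quintuple product nor \texttt{thetaids}; the quintuple product is needed only if you start from the bilateral series, as the paper does. Both products have modulus $20$, so your ``respectively $40$'' is unnecessary.

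\textbf{The fallback route fails.} Your ``self-contained alternative'' would not work, so drop it. Once you replace $\pm\rho$ by a general $X$, the two sides are no longer proportional:
\begin{itemize}
\item $\Ai_{\rho^2}(X)$ is a power series in $X$ with value $1$ at $X=0$. It satisfies the second-order relation \eqref{Airec}, which has step $\rho^2$.
\item $\thp_{\rho^{10}}(X)$ contains negative powers of $X$. It satisfies a first-order quasi-periodicity with step $\rho^{10}$.
\end{itemize}
There is no common difference equation, and neither relation links $X=\rho$ to $X=-\rho$. The lemma is an evaluation at two isolated points, and its content is precisely the two Rogers mod-$20$ identities. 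Comparing leading coefficients cannot substitute for that input. You need a genuine proof of those identities, such as the paper's Schur-limit derivation or a Bailey-pair argument, or else a citation.
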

\begin{proof}
We use the classical finite Schur identity, as recorded in
\cite[Section 2]{UZ}:
\[
 \sum_{r\ge0}z^{r^2}\genfrac{[}{]}{0pt}{}{m-r}{r}_z
 =\sum_{j\in\mathbb Z}(-1)^jz^{j(5j+1)/2}
 \genfrac{[}{]}{0pt}{}{m}{\lfloor(m-5j)/2\rfloor}_z.
\]
We substitute $z=q^{-1}$ into the finite Schur identity and use 
\[
\genfrac{[}{]}{0pt}{}{N}{r}_{q^{-1}}
=q^{-r(N-r)}\qb{N}{r}.
\]
For $m=2M$ and $m=2M+1$, multiply by $q^{M^2}$ and
$q^{M^2+M}$, respectively.
 Writing the original summation index as \(s\) and setting \(r=M-s\), we obtain  the exponents
\[
M^2+s^2-2Ms=r^2,
\qquad
M^2+M+s^2-(2M+1)s=r^2+r.
\]
On the right-hand sides, separating $j=2k$ and $j=2k+1$
therefore gives
\begin{align*}
	\sum_{r=0}^{M}q^{r^2}\qb{M+r}{2r}
	&=\sum_{k\in\mathbb Z}
	q^{15k^2-k}\qb{2M}{M-5k} 
	 -\sum_{k\in\mathbb Z}
	q^{15k^2+19k+6}\qb{2M}{M-5k-3},\\
	\sum_{r=0}^{M}q^{r^2+r}\qb{M+r+1}{2r+1}
	&=\sum_{k\in\mathbb Z}
	q^{15k^2+4k}\qb{2M+1}{M-5k} 
	 -\sum_{k\in\mathbb Z}
	q^{15k^2+14k+3}\qb{2M+1}{M-5k-2}.
\end{align*}

For fixed $r\ge0$ and $c\in\mathbb Z$, we have,
coefficientwise as $M\to\infty$,
\[
\qb{M+r}{2r}\longrightarrow\frac{1}{(q;q)_{2r}},
\ 
\qb{M+r+1}{2r+1}\longrightarrow\frac{1}{(q;q)_{2r+1}},
\ 
\qb{2M+\varepsilon}{M+c}\longrightarrow\frac{1}{J_1}
\quad(\varepsilon=0,1).
\]
The quadratic exponents tend to infinity with the
summation indices, so only finitely many indices
contribute to any fixed coefficient of $q$.
We may therefore pass to the limit in the preceding
identities. Multiplying by $J_1$ yields
\begin{align}
	J_1\sum_{r\ge0}\frac{q^{r^2}}{(q;q)_{2r}}
	&=\sum_{k\in\mathbb Z}
	\bigl(q^{15k^2-k}-q^{15k^2+19k+6}\bigr),
	\label{evenC}\\
	J_1\sum_{r\ge0}\frac{q^{r^2+r}}{(q;q)_{2r+1}}
	&=\sum_{k\in\mathbb Z}
	\bigl(q^{15k^2+4k}-q^{15k^2+14k+3}\bigr).
	\label{oddC}
\end{align}

Since $q=\rho^4$, the definition of $\mathsf C_+(\rho)$
gives
\begin{align*}
	\mathsf C_+(\rho)
	&=\sum_{m\ge0}\frac{\rho^{m^2}}{(q;q)_m}=\sum_{r\ge0}\frac{q^{r^2}}{(q;q)_{2r}}
	+\rho\sum_{r\ge0}\frac{q^{r^2+r}}{(q;q)_{2r+1}}.
\end{align*}
Thus \eqref{evenC} and \eqref{oddC} imply
\begin{align*}
	J_1\mathsf C_+(\rho)
	&=\sum_{k\in\mathbb Z}
	\bigl(\rho^{60k^2-4k}+\rho^{60k^2+16k+1}\bigr)\\
	&\quad-\sum_{k\in\mathbb Z}
	\bigl(\rho^{60k^2+56k+13}
	+\rho^{60k^2+76k+24}\bigr)\\
	&=\sum_{n\in\mathbb Z}(-1)^n
	\bigl(\rho^{15n^2-2n}+\rho^{15n^2+8n+1}\bigr),
\end{align*}
where the last equality combines $n=2k$ and $n=2k+1$.

 The classical quintuple product identity, in the form
\[
 (p,z,p/z;p)_\infty(pz^2,p/z^2;p^2)_\infty
 =\sum_{n\in\mathbb Z}p^{n(3n-1)/2}z^{3n}(1-zp^n),
\]
with $p=\rho^{10}$ and $z=-\rho$, evaluates this as
\[
 \thp_{\rho^{10}}(\rho)(\rho^8,\rho^{12};\rho^{20})_\infty
 =\thp_{\rho^{10}}(\rho)\frac{J_1G(q)}{J_5}.
\]
Replacing $\rho$ by $-\rho$ proves the other sign.
\end{proof}

Define
\begin{align}\label{h-72}
	\tau_1:=\thp_{\rho^2}(-\rho),\quad \tau_2:=\thp_{\rho^2}(\rho),\quad
	\tau_3:=\thp_{\rho^6}(\rho),\quad \tau_4:=\thp_{\rho^6}(-\rho).
\end{align}

\begin{corollary}\label{cor:Airy-specialization}
The following identity holds among the specified values of the
$q$-Airy and theta functions:
\begin{equation}\label{CH}
 \mathsf C_+(\rho)\tau_1-\mathsf C_-(\rho)\tau_2=-2\rho J_1H(q).
\end{equation}
\end{corollary}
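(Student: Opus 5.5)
The plan is to reduce \eqref{CH} to a pure theta-function identity by means of Lemma~\ref{Cvalue}, and then prove that identity by a parity dissection in $\rho$.

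\textbf{Reduction.} Substituting $\mathsf C_\pm(\rho)=G(q)J_5^{-1}\thp_{\rho^{10}}(\pm\rho)$ into the left-hand side of \eqref{CH} turns the claim into
\[
\thp_{\rho^{10}}(\rho)\thp_{\rho^{2}}(-\rho)-\thp_{\rho^{10}}(-\rho)\thp_{\rho^{2}}(\rho)
=-2\rho\,\frac{J_1J_5H(q)}{G(q)} .
\]
By \eqref{eq:intro-RR1}--\eqref{eq:intro-RR2}, the right-hand side equals $-2\rho J_1J_5\,(q,q^4;q^5)_\infty/(q^2,q^3;q^5)_\infty$. The left-hand side has the form $\Phi(\rho)-\Phi(-\rho)$ with $\Phi(\rho):=\thp_{\rho^{10}}(\rho)\thp_{\rho^2}(-\rho)$. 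It is therefore twice the odd part of $\Phi$ in $\rho$.

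\textbf{Double series and parity condition.} Jacobi's triple product in the series form $\thp_p(z)=\sum_k p^{k(k-1)/2}z^k$ gives
\[
\Phi(\rho)=\sum_{k,l\in\mathbb Z}(-1)^l\rho^{5k^2-4k+l^2}.
\]
The exponent $5k^2-4k+l^2$ has the parity of $k+l$. So the odd part consists of the terms with $k+l$ odd, and we must show
\[
\sum_{k+l\ \mathrm{odd}}(-1)^l\rho^{5k^2-4k+l^2}
=-\rho J_1J_5\frac{(q,q^4;q^5)_\infty}{(q^2,q^3;q^5)_\infty}.
\]

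\textbf{Evaluating the odd part.} I would split into the two cases ($k$ even, $l$ odd) and ($k$ odd, $l$ even). Writing $k=2a$ or $2a+1$ and $l=2b+1$ or $2b$, each case factors as a product of two single theta series in $q=\rho^4$, with an overall factor $\rho$ (respectively $\rho$ times a power of $q$).
\begin{itemize}
\item The $l$-sums are $\sum_b(-1)^{b}\ldots$-type series. Concretely, $\sum_b q^{b^2+b}$ times a sign, and $\sum_b q^{b^2}$ with sign $+1$, evaluate to $J_2$-type products.
\item The $k$-sums are theta functions with nome $q^5$.
\end{itemize}
Each summand therefore becomes a product of two theta functions evaluated via Jacobi's triple product. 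The remaining task is a two-term theta identity stating that this sum equals $-J_1J_5H/G$. Multiplying through by $G$, it says a two-term combination of theta products equals $-J_1J_5H(q)$; each side is a (generalized) eta-quotient combination of level $20$ or $40$.

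I expect this last step to be the main obstacle. I would first try to recognize it as an instance of the quintuple product identity, or of the classical relation $G(q)H(q)$-type formulas $H(q)/G(q)=(q,q^4;q^5)_\infty/(q^2,q^3;q^5)_\infty$ combined with a Weierstrass three-term theta relation. If no clean classical derivation emerges, I would, exactly as in the proof of Proposition~\ref{prop:products}, write all terms as generalized eta-products and verify the resulting modular-function identity with \texttt{provemodfuncidBATCH} from \texttt{thetaids}~\cite{FG}.
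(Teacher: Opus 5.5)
Your proposal is correct, but it takes a different route from the paper.

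\textbf{The paper's route.} The paper does not use Lemma~\ref{Cvalue} here. It specializes Morita's connection formula \eqref{Morita} at $p=\rho^2$, $X=\rho$. The theta shift gives $\thp_{\rho^2}(\rho^{-1})=\rho^{-1}\tau_2$ and $\thp_{\rho^2}(-\rho^{-1})=-\rho^{-1}\tau_1$. In addition, $(p^2;p^2)_\infty=J_1$ and $\mathscr R_q(-q)=\sum_r q^{r^2+r}/(q;q)_r=H(q)$. So \eqref{CH} follows in one line, with no theta identity left to check.

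\textbf{Your route.} You substitute Lemma~\ref{Cvalue} and then take a parity dissection in $\rho$. This is exactly how the paper itself handles $\mathsf C_-(\rho)\tau_4-\mathsf C_+(\rho)\tau_3$ in Proposition~\ref{prop:multiplier}. Your reduction is sound. One small correction: both parity classes carry exactly the factor $\rho$, with no extra power of $q$.

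The final step simplifies further than you noticed. Since $J_1H(q)=J_5/G(q)=(q,q^4,q^5;q^5)_\infty$, the identity you need becomes
\[
\sum_{a\in\mathbb Z}q^{5a^2-2a}\sum_{b\in\mathbb Z}q^{b^2+b}-\sum_{a\in\mathbb Z}q^{5a^2+3a}\sum_{b\in\mathbb Z}q^{b^2}=(q,q^4,q^5;q^5)_\infty^2 .
\]
This is a three-term identity among weight-one theta products. It agrees coefficientwise through $q^{13}$. Given Lemma~\ref{Cvalue}, it is equivalent to \eqref{CH}, so it is true, and your \texttt{thetaids} fallback will succeed.

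\textbf{Comparison.} Your route makes this step independent of Morita's theorem and its normalization. The cost is reliance on Lemma~\ref{Cvalue} (the finite Schur identity plus the quintuple product) and an extra modular verification. The paper's route is immediate and reuses the same connection formula that drives Lemma~\ref{lem:directions}.
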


\begin{proof}
	Define
	\[
	\mathscr R_p(Z)
	:=
	\sum_{r\ge0}
	\frac{p^{r^2}(-Z)^r}{(p;p)_r}.
	\]
	We use Morita's connection theorem~\cite{Morita} in the normalization
	\begin{equation}\label{Morita}
		\vartheta_p^{+}(X/p)\Ai_p(-X)
		+\vartheta_p^{+}(-X/p)\Ai_p(X)
		=
		2(p^2;p^2)_\infty
		\mathscr R_{p^2}(-p^3/X^2),
	\end{equation}
	where \(X\ne0\). The factor \(2\) is consistent with
	\[
	(p,-1;p)_\infty=2(p^2;p^2)_\infty.
	\]
	
	Specialize \eqref{Morita} at
$
	p=\rho^2$, $X=\rho.$  
	By the theta shift relation,
	\[
	\vartheta_{\rho^2}^{+}(\rho^{-1})
	=\rho^{-1}\vartheta_{\rho^2}^{+}(\rho)
	=\rho^{-1}\tau_2,
	\]
	and
	\[
	\vartheta_{\rho^2}^{+}(-\rho^{-1})
	=-\rho^{-1}\vartheta_{\rho^2}^{+}(-\rho)
	=-\rho^{-1}\tau_1.
	\]
	Moreover, by the definitions of \(\mathsf C_\pm(\rho)\),
	\[
	\Ai_{\rho^2}(-\rho)=\mathsf C_-(\rho),
	\qquad
	\Ai_{\rho^2}(\rho)=\mathsf C_+(\rho).
	\]
	 	Therefore,
	\[
	\begin{aligned}
		\mathscr R_{p^2}\left(-\frac{p^3}{X^2}\right)
		&=\mathscr R_q(-q)
		=\sum_{r\ge0}
		\frac{q^{r^2}\{-(-q)\}^r}{(q;q)_r}\\
		&=\sum_{r\ge0}
		\frac{q^{r^2+r}}{(q;q)_r}
		=H(q),
	\end{aligned}
	\]
	where the last equality follows from \eqref{eq:intro-RR2}.
	
	Consequently, the specialization of \eqref{Morita} becomes
	\[
	\rho^{-1}\tau_2\mathsf C_-(\rho)
	-\rho^{-1}\tau_1\mathsf C_+(\rho)
	=
	2J_1H(q).
	\]
	Multiplying by \(-\rho\) and rearranging gives
	\[
	\mathsf C_+(\rho)\tau_1
	-\mathsf C_-(\rho)\tau_2
	=
	-2\rho J_1H(q),
	\]
	which is \eqref{CH}.
\end{proof}

\begin{lemma}\label{lem:directions}
For each integer $k$, define
\begin{align}
 \mathsf c_k:=\thp_{q^{3/2}}(q^{k-3/4}),\qquad
 \mathsf d_k:=\thp_{q^{3/2}}(-q^{k-3/4}). \label{h-71}
\end{align}
Then  
\begin{align}
 \tau_1\mathscr S_u(q^k)+\tau_2\mathscr S_v(q^k)
 &=2J_1\N(0,k),\label{firstS}\\
 \tau_1\mathscr T_u(q^k)+\tau_2\mathscr T_v(q^k)
 &=2J_1\N(1,k),\label{firstT}\\
 \mathsf c_k\mathscr S_u(q^k)+\mathsf d_k\mathscr S_v(q^k)
 &=2J_3\N(k-1,3-2k),\label{secondS}\\
 \mathsf c_k\mathscr S_u(q^{k+3})+\mathsf d_k\mathscr S_v(q^{k+3})
 &=2q^{2k}J_3\N(k+2,-3-2k),\label{shiftS}\\
 \mathsf c_k\mathscr T_u(q^k)+\mathsf d_k\mathscr T_v(q^k)
 &=-2q^{k-1}J_3\N(k,-2k).\label{shiftT}
\end{align}
\end{lemma}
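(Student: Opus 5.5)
The idea is to write each left-hand side as a sum over $j$ and to collapse the inner combination of $u$ and $v$ to a single sum over $r$, using a theta-weighted connection formula. By definition,
\[
\mathscr S_u(q^k)=\sum_{j\ge0}\frac{q^{3j^2+kj}u_{3j}}{(q^3;q^3)_j},
\qquad
\mathscr T_u(q^k)=\sum_{j\ge0}\frac{q^{3j^2+kj}u_{3j-1}}{(q^3;q^3)_j},
\]
and similarly for $v$. It therefore suffices to evaluate, for each fixed index $m$, the combinations $\tau_1u_m+\tau_2v_m$ and $\mathsf c_ku_m+\mathsf d_kv_m$ as single $q$-series in $r$. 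Resumming over $j$ then produces $\N(a,b)$, with $\N$ as in \eqref{Ndef}.

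\textbf{Inner evaluations.} Apply Morita's formula \eqref{Morita} with $p=q^{1/2}=\rho^2$ and $X=q^{m/2+3/4}$. Then $\Ai_p(\mp X)$ are exactly the Airy factors in $v_m$ and $u_m$. The theta factors $\thp_{\rho^2}(\pm X/p)=\thp_{\rho^2}(\pm\rho^{2m+1})$ reduce, by the quasi-periodicity $\thp_p(pz)=z^{-1}\thp_p(z)$, to $\pm$ a monomial times $\tau_2$ or $\tau_1$. The sign $(-1)^m$ and the power $q^{-m^2/4}$ in $u_m,v_m$ are designed to absorb exactly these monomials, so they are independent of $m$. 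The right side is then
\[
2(q;q)_\infty\,\mathscr R_q\bigl(-q^{3/2}/q^{m+3/2}\bigr)
=2J_1\sum_r\frac{q^{r^2-mr}}{(q;q)_r}.
\]
This should give, up to the constant $2J_1$,
\[
\tau_1u_m+\tau_2v_m=2J_1\kappa_m,
\]
where $\kappa_m$ is the single sum in Lemma~\ref{lem:schur}. A cleaner route to the same identity uses \eqref{ainv} and \eqref{eq:schur}. Since $\tau_1u_m+\tau_2v_m$ satisfies \eqref{eq:hrec}, it suffices to check the cases $m=-1,0$. These reduce, via Lemma~\ref{Cvalue} and \eqref{CH}, to the claims $\tau_1\mathsf D_++\tau_2\mathsf D_-=2J_1G(q)$ and $-\tau_1\mathsf C_++\tau_2\mathsf C_-=2\rho J_1H(q)$; the second is exactly \eqref{CH}. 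Substituting $m=3j$ and $m=3j-1$ and summing over $j$ then gives \eqref{firstS} and \eqref{firstT}. For \eqref{firstT}, the exponent is $3j^2+kj+r^2-(3j-1)r=\mathcal Q_-(r,j)+r+kj$, matching \eqref{h-9}.

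For the second family, use Morita's formula again, but with base $p=q^{3/2}$ and the nome substitution adapted to it. The idea is that $\mathsf c_k,\mathsf d_k$ are theta functions in base $q^{3/2}$, and that the $j$-sum, with weights $q^{3j^2}/(q^3;q^3)_j$, can be absorbed so that the roles of $r$ and $s$ are interchanged. Concretely, I would expand $u_{3j},v_{3j}$ as Airy series in an inner variable $\ell$ and interchange the order of summation. The combination $\mathsf c_k(\cdot)+\mathsf d_k(\cdot)$ should then pick out a Jacobi triple product in the $j$-variable, via a second connection identity at $p=q^{3/2}$. That leaves a double sum with quadratic form $\mathcal Q_-$ in new variables, namely $\N(k-1,3-2k)$. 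Identities \eqref{shiftS} and \eqref{shiftT} should then follow either by the same computation with shifted exponents, or from \eqref{secondS} together with the transform relations \eqref{eq:ST1}--\eqref{eq:ST2} and the contiguous relations \eqref{cont1}--\eqref{cont2}. In the latter approach one replaces $k$ by $k+3$, uses $\mathsf c_{k+3}$ versus $\mathsf c_k$ (quasi-periodicity gives $\mathsf c_{k+3/2\cdot 2}$-type shifts), and matches parameters.

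\textbf{Main obstacle.} I expect the hardest step to be \eqref{secondS}: identifying the correct connection formula in base $q^{3/2}$ and performing the interchange of summation so that the output is exactly a $\N$-sum. As a first attempt, I would verify the identity coefficientwise for small $k$ to pin down normalizations. I would then prove it by showing that both sides satisfy the same $q$-difference equation in $x=q^k$, namely \eqref{eq:ST1} transported through the theta multipliers (using $\mathsf c_{k+1}$ related to $\mathsf c_k$ via $\thp$ shifts by $q^{3/2}$ in pairs), and invoke the uniqueness argument of Lemma~\ref{lem:unique} with boundary behaviour as $k\to\infty$.
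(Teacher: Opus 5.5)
Your argument for \eqref{firstS} and \eqref{firstT} is the paper's. You apply \eqref{Morita} at $p=q^{1/2}$, $X=q^{m/2+3/4}$, use theta quasi-periodicity to get $\tau_1u_m+\tau_2v_m=2J_1\sum_{r\ge0}q^{r^2-mr}/(q;q)_r$, and sum over $m=3j$ and $m=3j-1$. Your ``cleaner route'' is not independent of this: the $m=0$ value $\tau_1\mathsf D_+(\rho)+\tau_2\mathsf D_-(\rho)=2J_1G(q)$ follows from neither Lemma~\ref{Cvalue} nor \eqref{CH}; it is \eqref{Morita} again at $X=\rho^3$.

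\textbf{The gap: \eqref{secondS}.} You leave this as the main obstacle and describe the mechanism inaccurately. No new connection formula in base $q^{3/2}$ needs to be found, and no triple product in $j$ arises. What is missing is that the inner $j$-sum is itself a $q$-Airy value. Expand $u_{3j},v_{3j}$ through $\Ai_{q^{1/2}}$ in an index $r$. The summand of $\mathscr S_u(q^k)$ is then $(-1)^jq^{(r^2+6rj+3j^2+2r+4kj)/4}/\bigl((q;q)_r(q^3;q^3)_j\bigr)$; for $\mathscr S_v(q^k)$ the sign is $(-1)^r$ instead. Two facts now do the work: $\tfrac34j^2+\tfrac32rj+kj=\tfrac34j(j-1)+\bigl(\tfrac32r+k+\tfrac34\bigr)j$, and $(q^3;q^3)_j=(p^2;p^2)_j$ for $p=q^{3/2}$. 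Summing over $j$ first therefore gives $\Ai_{q^{3/2}}(-X_r)$ for $u$ and $(-1)^r\Ai_{q^{3/2}}(X_r)$ for $v$, with $X_r=q^{3r/2+k+3/4}$.

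Now \eqref{Morita} itself applies with $p=q^{3/2}$, $X=X_r$. Since $X_r/p=p^rq^{k-3/4}$, the theta factors become $a_r\mathsf c_k$ and $(-1)^ra_r\mathsf d_k$, where $a_r=q^{-3r^2/4+(3/2-k)r}$. This $(-1)^r$ is exactly the sign carried by the $v$-expansion. The right side is $2J_3a_r^{-1}\mathscr R_{q^3}(-q^{3-3r-2k})$, and with $q^{(r^2+2r)/4}a_r^{-1}=q^{r^2+(k-1)r}$ the sum over $r$ is $2J_3\N(k-1,3-2k)$.

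\textbf{The other two identities.} The same computation with $Y_r=q^{3r/2+k+9/4}$ proves \eqref{shiftT}. Here $Y_r/p=p^{r+1}q^{k-3/4}$, so the theta factors carry $(-1)^{r+1}$, matching the extra minus sign in $u_{3j-1}$. The prefactor $\rho^{-1}$ from the index $3j-1$ then produces $-q^{k-1}$. Identity \eqref{shiftS} needs neither \eqref{eq:ST1}--\eqref{eq:ST2} nor contiguous relations. It is simply \eqref{secondS} at $k+3$ combined with $\mathsf c_{k+3}=q^{-2k}\mathsf c_k$ and $\mathsf d_{k+3}=q^{-2k}\mathsf d_k$.

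\textbf{Why your fallback fails.} The difference-equation route you commit to in your last paragraph would not close the gap, for two reasons.
\begin{itemize}
\item $\mathsf c_{k+1}$ is not a monomial multiple of $\mathsf c_k$. The arguments differ by the factor $q$, which is not an integral power of the nome $q^{3/2}$. Indeed $\mathsf c_0=q^{-3/4}\thp_{q^{3/2}}(q^{3/4})$, while $\mathsf c_1=\mathsf c_2=\thp_{q^{3/2}}(q^{1/4})=\tau_3$; these are genuinely different theta constants. Only $k\mapsto k+3$ is a quasi-period.
\item Lemma~\ref{lem:unique} needs the order of the difference to tend to $+\infty$, which fails here. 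Both sides of \eqref{secondS} have least $q$-exponent of order $-(k-\tfrac32)^2/3$. For instance $\mathsf c_k=\sum_nq^{3n^2/4+(k-3/2)n}$, and $\N(k-1,3-2k)$ has positive coefficients with minimal exponent of the same size (attained at $r=0$). So the orders tend to $-\infty$. Normalizing to repair this would need an independent asymptotic evaluation of $\N(k-1,3-2k)$ as $k\to\infty$, to be matched with $\mathsf c_k\mathsf D_+(\rho)+\mathsf d_k\mathsf D_-(\rho)$. You do not supply this, and it is itself a connection-type identity of the same kind.
\end{itemize}
The direct interchange-plus-Morita computation above is the step your proposal is missing.
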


\begin{proof}
		Write \[
	Z_k^+:=\mathscr S_u(q^k), \qquad  
	Z_k^-:=\mathscr S_v(q^k).
	\]
	Direct expansion gives
	\begin{equation}\label{Z}
		Z_k^\pm
		=\sum_{r,s\ge0}
		\frac{\sigma_\pm(r,s)
			q^{(r^2+6rs+3s^2+2r+4ks)/4}}
		{(q;q)_r(q^3;q^3)_s},
		\ \ 
		\sigma_+(r,s):=(-1)^s,\ \ 
		\sigma_-(r,s):=(-1)^r.
	\end{equation}
	The theta series gives, for every integer $n$,
	\[
		\thp_p(p^nz)
	 =p^{-n(n-1)/2}z^{-n}\thp_p(z).
	\]
	We first prove \eqref{firstS} and \eqref{firstT}.
	In \eqref{Morita}, take
	$
	p=q^{1/2}$, $
	X=q^{m/2+3/4}$,
 $ (m\ge-1)$. 
	Applying the theta shift formula with $n=m$ gives
	\[
	\begin{aligned}
		\thp_{q^{1/2}}(q^{m/2+1/4})
		&=q^{-m(m-1)/4}q^{-m/4}\tau_2
		=q^{-m^2/4}\tau_2,\\
		\thp_{q^{1/2}}(-q^{m/2+1/4})
		&=q^{-m(m-1)/4}(-q^{1/4})^{-m}\tau_1
		=(-1)^mq^{-m^2/4}\tau_1.
	\end{aligned}
	\]
	Consequently, \eqref{Morita} becomes
	\[
	\begin{aligned}
		&\tau_2q^{-m^2/4}
		\Ai_{q^{1/2}}(-q^{m/2+3/4})
		+\tau_1(-1)^mq^{-m^2/4}
		\Ai_{q^{1/2}}(q^{m/2+3/4})
		=2J_1\mathscr R_q(-q^{-m}).
	\end{aligned}
	\]
	By the definitions of $u_m$, $v_m$, and $\mathscr R_q$, this is
	\[
	\tau_1u_m+\tau_2v_m
	=2J_1\mathscr R_q(-q^{-m})
	=2J_1\sum_{r\ge0}\frac{q^{r^2-mr}}{(q;q)_r}.
	\]
	
	Taking $m=3s$, multiplying by
	$q^{3s^2+ks}/(q^3;q^3)_s$, and summing over $s\ge0$, we obtain
	\[
	\begin{aligned}
		\tau_1\mathscr S_u(q^k)+\tau_2\mathscr S_v(q^k)
		&=\sum_{s\ge0}
		\frac{q^{3s^2+ks}}{(q^3;q^3)_s}
		\bigl(\tau_1u_{3s}+\tau_2v_{3s}\bigr)\\
		&=2J_1\sum_{r,s\ge0}
		\frac{q^{r^2-3rs+3s^2+ks}}
		{(q;q)_r(q^3;q^3)_s}\\
		&=2J_1\N(0,k).
	\end{aligned}
	\]
	This proves \eqref{firstS}.
	
	Taking instead $m=3s-1$ gives
	\[
	\tau_1u_{3s-1}+\tau_2v_{3s-1}
	=2J_1\sum_{r\ge0}
	\frac{q^{r^2-3rs+r}}{(q;q)_r}.
	\]
	Hence
	\[
	\begin{aligned}
		\tau_1\mathscr T_u(q^k)+\tau_2\mathscr T_v(q^k)
		&=\sum_{s\ge0}
		\frac{q^{3s^2+ks}}{(q^3;q^3)_s}
		\bigl(\tau_1u_{3s-1}+\tau_2v_{3s-1}\bigr)\\
		&=2J_1\sum_{r,s\ge0}
		\frac{q^{r^2-3rs+3s^2+r+ks}}
		{(q;q)_r(q^3;q^3)_s}\\
		&=2J_1\N(1,k),
	\end{aligned}
	\]
	which proves \eqref{firstT}.  
	
	We next prove \eqref{secondS}.
 For each $r\ge0$, put
$
	X_r:=q^{3r/2+k+3/4}.
$
	The identity
	\[
	\frac{3s^2}{4}+\frac{3rs}{2}+ks
	=\frac{3s(s-1)}4+
	\left(\frac{3r}{2}+k+\frac34\right)s
	\]
	allows us to sum \eqref{Z} over $s$ first:
	\[
	\begin{aligned}
		\mathscr S_u(q^k)
		&=\sum_{r\ge0}\frac{q^{(r^2+2r)/4}}{(q;q)_r}
		\Ai_{q^{3/2}}(-X_r),\\
		\mathscr S_v(q^k)
		&=\sum_{r\ge0}\frac{(-1)^rq^{(r^2+2r)/4}}{(q;q)_r}
		\Ai_{q^{3/2}}(X_r).
	\end{aligned}
	\]
	
	Apply \eqref{Morita} with $p=q^{3/2}$ and $X=X_r$. 
	Define
	\[
	a_r:=q^{-3r^2/4+(3/2-k)r}.
	\]
	The theta shift formula gives
	\[
	\begin{aligned}
		\thp_{q^{3/2}}(X_r/p)
		&=q^{-3r(r-1)/4}q^{-(k-3/4)r}\mathsf c_k
		=a_r\mathsf c_k,\\
		\thp_{q^{3/2}}(-X_r/p)
		&=q^{-3r(r-1)/4}(-q^{k-3/4})^{-r}\mathsf d_k
		=(-1)^ra_r\mathsf d_k.
	\end{aligned}
	\]
	Thus
	\[
	\mathsf c_k\Ai_{q^{3/2}}(-X_r)
	+(-1)^r\mathsf d_k\Ai_{q^{3/2}}(X_r)
	=2J_3a_r^{-1}
	\mathscr R_{q^3}(-q^{3-3r-2k}).
	\]
	Now
	\[
	q^{(r^2+2r)/4}a_r^{-1}
	=q^{r^2+(k-1)r},
	\qquad
	\mathscr R_{q^3}(-q^{3-3r-2k})
	=\sum_{s\ge0}
	\frac{q^{3s^2+(3-3r-2k)s}}{(q^3;q^3)_s}.
	\]
	It follows that
	\[
	\begin{aligned}
		\mathsf c_k\mathscr S_u(q^k)
		+\mathsf d_k\mathscr S_v(q^k)
		&=2J_3\sum_{r\ge0}
		\frac{q^{r^2+(k-1)r}}{(q;q)_r}
		\mathscr R_{q^3}(-q^{3-3r-2k})\\
		&=2J_3\sum_{r,s\ge0}
		\frac{q^{r^2-3rs+3s^2+(k-1)r+(3-2k)s}}
		{(q;q)_r(q^3;q^3)_s}\\
		&=2J_3\N(k-1,3-2k).
	\end{aligned}
	\]
	This proves \eqref{secondS}.
	
	For \eqref{shiftS}, apply the theta shift formula with
	$p=q^{3/2}$ and $n=2$:
	\[
	\begin{aligned}
		\mathsf c_{k+3}
		&=\thp_{q^{3/2}}\bigl((q^{3/2})^2q^{k-3/4}\bigr)
		=q^{-3/2}q^{-2k+3/2}\mathsf c_k
		=q^{-2k}\mathsf c_k,\\
		\mathsf d_{k+3}
		&=\thp_{q^{3/2}}\bigl((q^{3/2})^2(-q^{k-3/4})\bigr)
		=q^{-3/2}(-q^{k-3/4})^{-2}\mathsf d_k
		=q^{-2k}\mathsf d_k.
	\end{aligned}
	\]
	Using \eqref{secondS} with $k$ replaced by $k+3$, we obtain
	\[
	\begin{aligned}
		q^{-2k}
		\bigl(\mathsf c_k\mathscr S_u(q^{k+3})
		+\mathsf d_k\mathscr S_v(q^{k+3})\bigr)
		&
		=\mathsf c_{k+3}\mathscr S_u(q^{k+3})
		+\mathsf d_{k+3}\mathscr S_v(q^{k+3})\\
		&=2J_3\N(k+2,-3-2k).
	\end{aligned}
	\]
	Multiplication by $q^{2k}$ proves \eqref{shiftS}.
	
	It remains to prove \eqref{shiftT}.
	Expanding the Airy functions at index $3s-1$ gives
	\[
	\begin{aligned}
		u_{3s-1}
		&=-(-1)^s q^{-(3s-1)^2/4}
		\sum_{r\ge0}
		\frac{q^{r(r-1)/4+(3s/2+1/4)r}}{(q;q)_r},\\
		v_{3s-1}
		&=q^{-(3s-1)^2/4}
		\sum_{r\ge0}
		\frac{(-1)^rq^{r(r-1)/4+(3s/2+1/4)r}}{(q;q)_r}.
	\end{aligned}
	\]
	The exponent simplifies to
	\[
	\begin{aligned}
		&3s^2+ks-\frac{(3s-1)^2}{4}
		+\frac{r(r-1)}4+\left(\frac{3s}{2}+\frac14\right)r
		=-\frac14+\frac{r^2+6rs+3s^2+(4k+6)s}{4}.
	\end{aligned}
	\]
	Since $\rho=q^{1/4}$, this yields
	\[
	\begin{aligned}
		\mathscr T_u(q^k)
		&=-\rho^{-1}\sum_{r,s\ge0}
		\frac{(-1)^s q^{r^2/4+3rs/2+3s^2/4+(k+3/2)s}}
		{(q;q)_r(q^3;q^3)_s},\\
		\mathscr T_v(q^k)
		&=\rho^{-1}\sum_{r,s\ge0}
		\frac{(-1)^r q^{r^2/4+3rs/2+3s^2/4+(k+3/2)s}}
		{(q;q)_r(q^3;q^3)_s}.
	\end{aligned}
	\]
	Put
$
	Y_r:=q^{3r/2+k+9/4}.
$
	Because
	\[
	\frac{3s^2}{4}+\frac{3rs}{2}
	+\left(k+\frac32\right)s
	=\frac{3s(s-1)}4+
	\left(\frac{3r}{2}+k+\frac94\right)s,
	\]
	we obtain
	\[
	\begin{aligned}
		\mathscr T_u(q^k)
		&=-\rho^{-1}\sum_{r\ge0}
		\frac{q^{r^2/4}}{(q;q)_r}\Ai_{q^{3/2}}(-Y_r),\\
		\mathscr T_v(q^k)
		&=\rho^{-1}\sum_{r\ge0}
		\frac{(-1)^rq^{r^2/4}}{(q;q)_r}\Ai_{q^{3/2}}(Y_r).
	\end{aligned}
	\]
	
	Applying  \eqref{Morita} with $p=q^{3/2}$ and $X=Y_r$ gives  
	\[
	\mathsf c_k\Ai_{q^{3/2}}(-Y_r)
	-(-1)^r\mathsf d_k\Ai_{q^{3/2}}(Y_r)
	=2J_3b_r^{-1}\mathscr R_{q^3}(-q^{-3r-2k}),
	\]
	where 	 
	\[
	b_r:=q^{-3r^2/4-kr-k+3/4}.
	\]
Therefore, 
	\[
	\begin{aligned}
		\mathsf c_k\mathscr T_u(q^k)
		+\mathsf d_k\mathscr T_v(q^k)
		&=-2\rho^{-1}J_3
		\sum_{r\ge0}\frac{q^{r^2/4}b_r^{-1}}{(q;q)_r}
		\mathscr R_{q^3}(-q^{-3r-2k})\\
		&=-2q^{k-1}J_3
		\sum_{r,s\ge0}
		\frac{q^{r^2-3rs+3s^2+kr-2ks}}
		{(q;q)_r(q^3;q^3)_s}\\
		&=-2q^{k-1}J_3\N(k,-2k),
	\end{aligned}
	\]
	which proves \eqref{shiftT}.
\end{proof}

Define
\[
 \Theta:=\sum_{m,n\in\mathbb Z}q^{m^2-mn+n^2+m-n},\qquad
 W:=\sum_{m,n\in\mathbb Z}q^{2m^2+mn+2n^2-2n}.
\]
The quadratic exponents are nonnegative on the integer lattice.
\begin{lemma}\label{binary}
	We have
\begin{equation}\label{binaryeval}
 \Theta=\frac{3J_3^3}{J_1},\qquad
 W=3J_3J_9H(q)G(q^9)-J_1J_3H(q)^2.
\end{equation}
\end{lemma}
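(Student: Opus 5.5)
The first identity in \eqref{binaryeval} I would reduce to the classical cubic theta function of the Borweins. The plan is to replace $n$ by $-n$, which turns the exponent into $m^2+mn+n^2+m+n$. Then
\[
q^{1/3}\Theta=\sum_{m,n\in\mathbb Z}q^{(m+\frac13)^2+(m+\frac13)(n+\frac13)+(n+\frac13)^2},
\]
which is the function usually denoted $c(q)$. Its product form $c(q)=3q^{1/3}J_3^3/J_1$ is standard. To keep the argument self-contained, I would rederive it in three steps:
\begin{enumerate}
\item Sum over $m$ with $n$ fixed modulo $3$.
\item Apply Jacobi's triple product, in the form of $\thp_p$ used above, to get a sum of three products of one-variable theta functions.
\item Combine these three products by the quintuple product identity (as in Lemma~\ref{Cvalue}), or simply check the resulting product identity with \texttt{thetaids}~\cite{FG}.
\end{enumerate}
Equivalently, one can specialize Lemma~\ref{lem:directions}-type Morita identities, but the direct route seems shorter.

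For $W$ I would use a Schröter-type dissection. Completing the square in $m$ gives
\[
2m^2+mn+2n^2-2n=2\Bigl(m+\frac n4\Bigr)^2+\frac{15n^2}{8}-2n.
\]
This suggests writing the lattice sum as a sum of products $\theta(\cdot;q^{a})\theta(\cdot;q^{b})$. A cleaner way is the substitution $u=m+n$, $v=m-n$, restricted to $u\equiv v\pmod 2$. The form then becomes
\[
\tfrac{5u^2+3v^2}{4}-(u-v),
\]
which is diagonal with coefficients $5$ and $3$. Splitting into the two parity classes $u,v$ even and $u,v$ odd gives
\[
W=\sum_{\epsilon\in\{0,1\}}\Bigl(\sum_{a}q^{5(2a+\epsilon)^2/4-(2a+\epsilon)}\Bigr)\Bigl(\sum_{b}q^{3(2b+\epsilon)^2/4+(2b+\epsilon)}\Bigr).
\]
This is a sum of two products of a base-$q^{5}$-type theta and a base-$q^{3}$-type theta, and each factor is a product by Jacobi's triple product. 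That makes $W$ an explicit combination of two eta-type products.

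It then remains to show that this combination equals $3J_3J_9H(q)G(q^9)-J_1J_3H(q)^2$. I would express $H(q)$ and $G(q^9)$ through the base-$q^{45}$ factors $\mathcal J_r$, exactly as in the proof of Proposition~\ref{prop:products}. After that, every term is a generalized eta quotient of level $45$, and the identity can be handed to \texttt{provemodfuncidBATCH}, after dividing by one term to obtain a modular function. The same check also covers $\Theta$ if the classical reference is not cited.

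The main obstacle I expect is the bookkeeping in the parity dissection: the half-integer exponents and the linear terms must assemble into genuine $\thp$ values with consistent $q$-powers. A mistake there would make the modular verification fail spuriously. Before invoking \texttt{thetaids}, I would check the first few dozen coefficients numerically.
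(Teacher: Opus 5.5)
Your proposal is correct. For $\Theta$ it is the same as the paper: replace $n$ by $-n$ and use the Borwein cubic theta evaluation $c(q)=3q^{1/3}J_3^3/J_1$, which the paper cites as Hirschhorn--Garvan--Borwein, Eq.~(1.23).

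For $W$ your route is genuinely different, and simpler. The paper writes $n=4k+\varepsilon$, $m=\ell-k$. This gives four products of theta functions with bases $q^4$ and $q^{60}$, and the paper then verifies a level-$720$ eta-quotient identity with \texttt{thetaids}. Your substitution $u=m+n$, $v=m-n$ diagonalizes the form instead. The bookkeeping you flagged as the main obstacle works out cleanly. The two parity classes give
\[
W=\sum_{a}q^{5a^2-2a}\sum_{b}q^{3b^2+2b}+q^{2}\sum_{a}q^{5a^2+3a}\sum_{b}q^{3b^2+5b}
=\thp_{q^{10}}(q^{3})\,\thp_{q^{6}}(q)+\thp_{q^{10}}(q^{2})\,\thp_{q^{6}}(q^{2}).
\]
The factor $q^2$ is absorbed by re-indexing $b=k-1$ in the last sum. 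I checked this two-term expression against both the lattice sum and $3J_3J_9H(q)G(q^9)-J_1J_3H(q)^2$ through $q^8$.

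What your route buys is a two-term identity at a lower level, so the final machine check is lighter. Both routes still rely on the modular-function verification for the last step.

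One correction to your setup: $\thp_{q^{10}}$ and $\thp_{q^{6}}$ are generalized eta quotients of levels $20$ and $12$. So rewriting $H(q)$ and $G(q^9)$ through the $\mathcal J_r$ does not put every term at level $45$; the common level is $\operatorname{lcm}(20,12,45)=180$. This only changes the input you give to \texttt{provemodfuncidBATCH}, not the validity of the argument.
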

\begin{proof}
After replacing $n$ by $-n$, the first identity becomes
\[
 \Theta=\sum_{m,n\in\mathbb Z}q^{m^2+mn+n^2+m+n},
\]
which is the specialization $x=1$ of the cubic theta-function
evaluation of Hirschhorn, Garvan, and Borwein
\cite[Eq.~(1.23)]{HGB}.

Write
\[
\vartheta_b(r):=\vartheta_{q^b}^{+}(q^r)
=\sum_{j\in\mathbb Z}
q^{\frac{b}{2}j(j-1)+rj}=(-q^r,-q^{b-r},q^b;q^b)_\infty.
\]
Two elementary identities that will be used below follow directly from
this definition:
\[
\begin{aligned}
	\vartheta_b(b-r)
	&=\sum_{j\in\mathbb Z}
	q^{\frac{b}{2}j(j+1)-rj}
	=\vartheta_b(r),\\
	\vartheta_b(r+b)
	&=\sum_{j\in\mathbb Z}
	q^{\frac{b}{2}j(j+1)+rj}
	=q^{-r}\vartheta_b(r).
\end{aligned}
\]

For every \(m,n\in\mathbb Z\), write uniquely
\[
n=4k+\varepsilon,\qquad
m=\ell-k,\qquad
\varepsilon\in\{0,1,2,3\}.
\]
A direct calculation gives
\[
\begin{aligned}
	&2m^2+mn+2n^2-2n
	=2\ell^2+\varepsilon\ell
	+30k^2+(15\varepsilon-8)k
	+2\varepsilon^2-2\varepsilon.
\end{aligned}
\]
Moreover,
\[
\sum_{\ell\in\mathbb Z}q^{2\ell^2+\varepsilon\ell}
=\vartheta_4(\varepsilon+2)
\]
and
\[
\sum_{k\in\mathbb Z}
q^{30k^2+(15\varepsilon-8)k}
=\vartheta_{60}(15\varepsilon+22).
\]
Consequently,
\begin{align}
	W
	&=\sum_{\varepsilon=0}^{3}
	q^{2\varepsilon^2-2\varepsilon}
	\vartheta_4(\varepsilon+2)
	\vartheta_{60}(15\varepsilon+22) \notag\\
	&=\vartheta_4(2)\vartheta_{60}(22)
	+\vartheta_4(3)\vartheta_{60}(37)
	+q^4\vartheta_4(4)\vartheta_{60}(52)
	+q^{12}\vartheta_4(5)\vartheta_{60}(67) \notag\\
	&=\vartheta_4(2)\vartheta_{60}(22)
	+\vartheta_4(1)\vartheta_{60}(23)
	+q^4\vartheta_4(4)\vartheta_{60}(8)
	+q^4\vartheta_4(1)\vartheta_{60}(7).
	\label{eq:W-dissection}
\end{align}
 
Note that  for \(0<r<b\),
\begin{align}
	\vartheta_b(r)
 =J_b
	\frac{(q^{2r},q^{2b-2r};q^{2b})_\infty}
	{(q^r,q^{b-r};q^b)_\infty},
	\qquad
	\vartheta_b(b)=\frac{2J_{2b}^{\,2}}{J_b}.
	\label{eq:vartheta-product}
\end{align}
 Substitution of \eqref{eq:vartheta-product} into \eqref{eq:W-dissection} reduces the second
identity in \eqref{binaryeval} to a generalized eta-product identity of
common level $720$.  We verify this identity directly with the
Frye--Garvan Maple package \texttt{thetaids}~\cite{FG}, using
\texttt{provemodfuncidBATCH}; the routine returns the required bound
$10608$ and proves the identity by its built-in valence-formula test.
Thus the Maple package proves the required eta-product equality. Since
the routine and its underlying criterion are established in~\cite{FG},
we do not reproduce the general modular function theory here.
\end{proof}

\begin{proposition}\label{prop:multiplier}
Suppose that $\mathscr U,\mathscr V,\nu,\lambda\in\mathbb C((\rho))$,
where $\rho=q^{1/4}$, satisfy
\begin{equation}\label{eq:universal-system}
\begin{pmatrix}\tau_1&\tau_2\\\tau_3&\tau_4\end{pmatrix}
\binom{\mathscr U}{\mathscr V}
=2\nu\binom{J_1}{\lambda J_3}.
\end{equation}
Then
\[
\frac{\mathsf C_-(\rho)\mathscr U+\mathsf C_+(\rho)\mathscr V}{2}
=\nu\mathcal M(\lambda),
\]
where 
\begin{equation}\label{eq:multiplier}
	\mathcal M(\lambda):=\frac{J_1}{J_3^2}
	\left(J_9G(q^9)+\frac{\lambda-1}{3}J_1H(q)\right).
\end{equation}
 In particular,
\begin{align}
\mathcal M(1)=G(q^9)/\Delta. \label{v-1}
	\end{align}
\end{proposition}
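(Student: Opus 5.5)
We will solve the $2\times2$ system \eqref{eq:universal-system} by Cramer's rule and then simplify the two resulting coefficients with the theta and Airy identities already available. Let $\delta:=\tau_1\tau_4-\tau_2\tau_3$. Then
\[
\mathscr U=\frac{2\nu(\tau_4J_1-\lambda\tau_2J_3)}{\delta},\qquad
\mathscr V=\frac{2\nu(\lambda\tau_1J_3-\tau_3J_1)}{\delta}.
\]
Substituting these, the target expression becomes
\[
\frac{\mathsf C_-\mathscr U+\mathsf C_+\mathscr V}{2}
=\frac{\nu}{\delta}\Bigl(J_1(\mathsf C_-\tau_4-\mathsf C_+\tau_3)
+\lambda J_3(\mathsf C_+\tau_1-\mathsf C_-\tau_2)\Bigr).
\]
The result is affine in $\lambda$, as is $\mathcal M(\lambda)$. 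It therefore suffices to evaluate three quantities: $\delta$, the combination $\mathsf C_+\tau_1-\mathsf C_-\tau_2$, and the combination $\mathsf C_-\tau_4-\mathsf C_+\tau_3$.

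The second quantity is given directly by Corollary~\ref{cor:Airy-specialization}: it equals $-2\rho J_1H(q)$. Matching the $\lambda$-coefficient then requires
\[
\delta=-6\rho J_3^3/J_1 .
\]
To prove this, we will expand $\tau_1\tau_4-\tau_2\tau_3$ as a double theta series in $\rho$. The product of $\vartheta^+_{\rho^2}(\mp\rho)$ and $\vartheta^+_{\rho^6}(\mp\rho)$ is a sum over $(j,k)$ of $(\pm1)^{j+k}\rho^{j(j-1)+3k(k-1)+j+k}$. In the difference, only the terms with $j+k$ odd survive, and each appears doubled. Reparametrizing the odd lattice should turn this into $-2\rho$ times a binary cubic form sum equal to $\Theta$. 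Lemma~\ref{binary} then gives $\Theta=3J_3^3/J_1$. Fixing the right change of variables here is a routine but fiddly computation.

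The main obstacle is the $\lambda$-free term. We need
\[
J_1(\mathsf C_-\tau_4-\mathsf C_+\tau_3)=\delta\,\frac{J_1}{J_3^2}\Bigl(J_9G(q^9)-\tfrac13J_1H(q)\Bigr),
\]
that is,
\[
\mathsf C_-\tau_4-\mathsf C_+\tau_3=-2\rho\bigl(3J_3J_9G(q^9)-J_1J_3H(q)\bigr)/J_1 .
\]
Here we will use Lemma~\ref{Cvalue}, which writes $\mathsf C_\pm=\frac{G(q)}{J_5}\vartheta^+_{\rho^{10}}(\pm\rho)$. The left side then becomes $\frac{G(q)}{J_5}$ times a difference of products of two theta series, with moduli $\rho^{10}$ and $\rho^6$. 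Again only odd-parity terms survive, giving $-2\rho$ times a positive binary quadratic form sum.

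We expect this form to be equivalent to the one defining $W$. Lemma~\ref{binary} would then give $W=3J_3J_9H(q)G(q^9)-J_1J_3H(q)^2$. Combined with the Rogers--Ramanujan identity $G(q)H(q)=\frac{J_5}{J_1}\cdot\frac{J_5}{(q,q^2,q^3,q^4;q^5)_\infty}$ type relations, the factor $G(q)/J_5$ should cancel against one factor of $H(q)$ and produce exactly the required expression. The first thing we would try is to identify the quadratic form $\tfrac{5}{2}j^2+\tfrac32k^2+\dots$ (in $q$-exponents) with $2m^2+mn+2n^2-2n$ by an explicit unimodular substitution on the odd sublattice. Should the cancellation of $G(q)/J_5$ not come out cleanly, we would instead verify this one product identity with \texttt{thetaids}, as is done elsewhere in the paper.

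Finally, at $\lambda=1$ we get $\mathcal M(1)=J_1J_9G(q^9)/J_3^2$. Identity \eqref{v-1} is then the product identity $\Delta=J_3^2/(J_1J_9)$. This is a classical Ramanujan-type relation for $G(q)G(q^9)+q^2H(q)H(q^9)$, which we would cite or verify with \texttt{thetaids}.
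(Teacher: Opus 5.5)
Your plan is correct and is essentially the paper's proof: you use Cramer's rule, \eqref{CH} for the $\lambda$-coefficient, and the single substitution $k=m+n$, $l=m-n+1$ on the odd-parity lattice. That substitution turns $\tau_1\tau_4-\tau_2\tau_3$ into $-2\rho\Theta$ and $\mathsf C_-(\rho)\tau_4-\mathsf C_+(\rho)\tau_3$ into $-2\rho G(q)W/J_5$, after which Lemma~\ref{binary} and \eqref{Delta} finish the argument, so no \texttt{thetaids} fallback is needed. The only slip is your displayed Rogers--Ramanujan relation, whose right-hand side evaluates to $J_5^3/J_1^2$. The correct (and sufficient) identity is $G(q)H(q)=1/(q,q^2,q^3,q^4;q^5)_\infty=J_5/J_1$, and this is exactly what cancels $G(q)/J_5$ against the factor $H(q)$ in $W$.
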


\begin{proof}
		It follows from the definitions of \(\tau_1,\tau_2,\tau_3, \tau_4\) that
		\begin{align*}
			\tau_1\tau_4-\tau_2\tau_3
			&=
			\sum_{k,l\in\mathbb Z}
			\left((-1)^{k+l}-1\right)
			\rho^{k^2+3l^2-2l}\\
			&=-2
			\sum_{\substack{k,l\in\mathbb Z\\ k+l\ {\rm odd}}}
			\rho^{k^2+3l^2-2l}.
		\end{align*}
		The change of variables
	$
		k=m+n,\  l=m-n+1
		$
		is a bijection from \(\mathbb Z^2\) onto the set of pairs
		\((k,l)\in\mathbb Z^2\) for which \(k+l\) is odd.

		Under this substitution,
		\begin{align*}
			k^2+3l^2-2l
			&=(m+n)^2+3(m-n+1)^2-2(m-n+1)\\
			&=1+4\bigl(m^2-mn+n^2+m-n\bigr).
		\end{align*}
		Since \(q=\rho^4\), we consequently obtain
		\begin{align}
			\tau_1\tau_4-\tau_2\tau_3
			&=-2\sum_{m,n\in\mathbb Z}
			\rho^{1+4(m^2-mn+n^2+m-n)} \nonumber \\
			&=-2\rho
			\sum_{m,n\in\mathbb Z}
			q^{m^2-mn+n^2+m-n} \nonumber \\
			&=-2\rho\Theta. \label{dettheta}
		\end{align}

	By Lemma~\ref{Cvalue},
		\begin{align*}
			\mathsf C_-(\rho)
			&=\frac{G(q)}{J_5}
			\sum_{k\in\mathbb Z}
			(-1)^k\rho^{5k^2-4k},\\
			\mathsf C_+(\rho)
			&=\frac{G(q)}{J_5}
			\sum_{k\in\mathbb Z}
			\rho^{5k^2-4k}.
		\end{align*}
		Combining these expansions with those of \(\tau_3\) and \(\tau_4\)
		gives
		\begin{align*}
			\mathsf C_-(\rho)\tau_4-\mathsf C_+(\rho)\tau_3
			&=
			\frac{G(q)}{J_5}
			\sum_{k,l\in\mathbb Z}
			\left((-1)^{k+l}-1\right)
			\rho^{5k^2-4k+3l^2-2l}\\
			&=-\frac{2G(q)}{J_5}
			\sum_{\substack{k,l\in\mathbb Z\\ k+l\ {\rm odd}}}
			\rho^{5k^2-4k+3l^2-2l}.
		\end{align*}
		Use again
	$
		k=m+n,\  l=m-n+1.
		$
		The exponent now becomes
		\begin{align*}
			5k^2-4k+3l^2-2l
			&=5(m+n)^2-4(m+n) +3(m-n+1)^2-2(m-n+1)\\
			&=1+4\bigl(2m^2+mn+2n^2-2n\bigr).
		\end{align*}
		It follows that
		\begin{align}
			\mathsf C_-(\rho)\tau_4-\mathsf C_+(\rho)\tau_3
			&=-\frac{2G(q)}{J_5}
			\sum_{m,n\in\mathbb Z}
			\rho^{1+4(2m^2+mn+2n^2-2n)} \nonumber \\
			&=-\frac{2\rho G(q)}{J_5}
			\sum_{m,n\in\mathbb Z}
			q^{2m^2+mn+2n^2-2n} \nonumber \\
			&=-\frac{2\rho G(q)}{J_5}W. \label{Ctheta}
		\end{align}

By \eqref{dettheta} and Lemma~\ref{binary},
\[
\tau_1\tau_4-\tau_2\tau_3=-\frac{6\rho J_3^3}{J_1}\ne0.
\]
Solving \eqref{eq:universal-system} gives
\[
\mathscr U=
\frac{2\nu(J_1\tau_4-\lambda J_3\tau_2)}{\tau_1\tau_4-\tau_2\tau_3},
\qquad
\mathscr V=
\frac{2\nu(\lambda J_3\tau_1-J_1\tau_3)}{\tau_1\tau_4-\tau_2\tau_3}.
\]
Consequently,
\begin{align*}
\frac{\mathsf C_-(\rho)\mathscr U+\mathsf C_+(\rho)\mathscr V}{2}
&=\nu\,
 \frac{J_1(\mathsf C_-(\rho)\tau_4-\mathsf C_+(\rho)\tau_3)
       +\lambda J_3(\mathsf C_+(\rho)\tau_1-\mathsf C_-(\rho)\tau_2)}
      {\tau_1\tau_4-\tau_2\tau_3}\\
&=\nu\frac{J_1}{\Theta}
 \left(\frac{G(q)W}{J_5}+\lambda J_3H(q)\right).
\end{align*}
The last equality uses \eqref{CH}, \eqref{dettheta}, and
\eqref{Ctheta}. For the final simplification,
$G(q)H(q)=J_5/J_1$ and
\eqref{binaryeval} give
\[
 \frac{G(q)W}{J_5}=\frac{3J_3J_9G(q^9)}{J_1}-J_3H(q),
 \qquad \frac{J_1}{\Theta}=\frac{J_1^2}{3J_3^3}.
\]
Substitution now yields
\begin{align*}
\frac{\mathsf C_-(\rho)\mathscr U+\mathsf C_+(\rho)\mathscr V}{2}
&=\nu\frac{J_1^2}{3J_3^3}
\left(\frac{3J_3J_9G(q^9)}{J_1}+(\lambda-1)J_3H(q)\right)\\
&=\nu\frac{J_1}{J_3^2}
\left(J_9G(q^9)+\frac{\lambda-1}{3}J_1H(q)\right)
=\nu\mathcal M(\lambda).
\end{align*}
Finally, it follows from
\cite[Chapter 8, equation (8.3.7)]{ABLostIII} that, in the present
notation,
\begin{equation}\label{Delta}
 \Delta=G(q)G(q^9)+q^2H(q)H(q^9)=\frac{J_3^2}{J_1J_9}.
\end{equation}
Setting $\lambda=1$ in \eqref{eq:multiplier} and using \eqref{Delta} proves \eqref{v-1}.
\end{proof}

\begin{lemma}\label{lem:asymmetric-series}
The four reflected sums in \eqref{eq:F7}--\eqref{eq:F10} satisfy
\begin{equation}\label{eq:asym-series}
\begin{aligned}
F_7&=\mathscr S_{\mathsf A}(q),\\
q^2F_8&=\mathscr S_{\mathsf B}(q),\\
F_9&=(1+q)\mathscr S_{\mathsf A}(q^2)
 +q^2\mathscr S_{\mathsf A}(q^5)+q^2\mathscr T_{\mathsf A}(q^2),\\
q^2F_{10}&=(1+q)\mathscr S_{\mathsf B}(q^2)
 +q^2\mathscr S_{\mathsf B}(q^5)+q^2\mathscr T_{\mathsf B}(q^2).
\end{aligned}
\end{equation}
\end{lemma}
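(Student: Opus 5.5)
The plan is a direct term-by-term comparison. I would expand each transform via its definition and substitute the finite sums \eqref{2-16}--\eqref{2-17}, exactly as in the computation of $\mathbf U_0,\mathbf V_0$ in Lemma~\ref{lem:common-vector}. For any $k$,
\[
\mathscr S_{\mathsf A}(q^k)=\sum_{s\ge0}\frac{q^{3s^2+ks}}{(q^3;q^3)_s}\sum_{r\ge0}q^{r^2-3sr}\qb{3s-r}{r}
=\sum_{r,s\ge0}\frac{q^{\mathcal Q_-(r,s)+ks}}{(q^3;q^3)_s}\qb{3s-r}{r}.
\]
The analogous formula for $\mathscr S_{\mathsf B}(q^k)$ has the Gaussian coefficient $\qb{3s-r-1}{r}$. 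Taking $k=1$ gives $F_7$ immediately. For the $\mathsf B$ case we obtain $q^2F_8$, since $F_8$ carries the shift $q^{-2}$. The same substitution with $k=2$ and $k=5$ produces the $(1+q)$-terms of $F_9$ and $F_{10}$, and also the terms $q^2\mathscr S_{\mathsf w}(q^5)$.

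The remaining task is to identify $q^2\mathscr S_{\mathsf w}(q^5)+q^2\mathscr T_{\mathsf w}(q^2)$ with the second double sums in \eqref{eq:F9} and \eqref{eq:F10}. For this I would reuse the argument of \eqref{h-7}. The recurrence \eqref{eq:hrec} at $m=3s+1$ gives $q^{3s}\mathsf w_{3s}+\mathsf w_{3s-1}=q^{3s}\mathsf w_{3s+1}$, and therefore
\[
\mathscr S_{\mathsf w}(q^5)+\mathscr T_{\mathsf w}(q^2)=\sum_{s\ge0}\frac{q^{3s^2+2s}\bigl(q^{3s}\mathsf w_{3s}+\mathsf w_{3s-1}\bigr)}{(q^3;q^3)_s}=\sum_{s\ge0}\frac{q^{3s^2+5s}\mathsf w_{3s+1}}{(q^3;q^3)_s}.
\]
At $s=0$ this is consistent with the extended values $\mathsf w_{-1}$, because $\mathsf w_1=\mathsf w_0+\mathsf w_{-1}$ holds for both $\mathsf A$ and $\mathsf B$ by Lemma~\ref{lem:schur}.

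Next I would expand $\mathsf A_{3s+1}=\sum_r q^{r^2-(3s+1)r}\qb{3s+1-r}{r}$ and $\mathsf B_{3s+1}=\sum_r q^{r^2-(3s+1)r}\qb{3s-r}{r}$. This yields exponent $\mathcal Q_-(r,s)-r+5s$ together with the Gaussian coefficients $\qb{3s-r+1}{r}$ and $\qb{3s-r}{r}$ respectively. After multiplying by $q^2$, these match the second sum of $F_9$, and $q^2$ times the second sum of $F_{10}$.

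The only delicate point I foresee is bookkeeping. I need to confirm that the $q^{-2}$ shifts in $F_8$ and $F_{10}$ correspond exactly to the factor $q^2$ placed on the left in \eqref{eq:asym-series}, and that no boundary terms at $s=0$ are lost. The latter holds because $\mathsf B_0=0$ and $\qb{-1}{r}=0$, and the $s=0$ term of the $\mathsf B_{3s+1}$ sum is $\mathsf B_1=1=\qb{0}{0}$. Otherwise the identities are a direct rearrangement of absolutely (coefficientwise) convergent double sums.
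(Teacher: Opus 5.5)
Your proposal is correct and follows essentially the same route as the paper. Both arguments expand the transforms via \eqref{2-16}--\eqref{2-17} and apply the recurrence \eqref{eq:hrec} at $m=3s+1$ to pass between $\sum_{s}q^{3s^2+5s}\mathsf w_{3s+1}/(q^3;q^3)_s$ and $\mathscr S_{\mathsf w}(q^5)+\mathscr T_{\mathsf w}(q^2)$, with the $s=0$ term covered by $\mathsf w_1=\mathsf w_0+\mathsf w_{-1}$. The only difference is direction: you combine the two transforms into the $\mathsf w_{3s+1}$ sum, while the paper splits that sum into the two transforms.
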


\begin{proof}
 By \eqref{eq:F7} and \eqref{2-16},
	\[
	\begin{aligned}
		F_7
		&=\sum_{s\ge0}\frac{q^{3s^2+s}}{(q^3;q^3)_s}
		\sum_{r\ge0}q^{r^2-3sr}\qb{3s-r}{r}\\
		&=\sum_{s\ge0}\frac{q^{3s^2+s}\mathsf A_{3s}}
		{(q^3;q^3)_s}
		=\mathscr S_{\mathsf A}(q).
	\end{aligned}
	\]
	Similarly, in light of  \eqref{eq:F8} and \eqref{2-17},
	\[
	\begin{aligned}
		q^2F_8
		&=\sum_{s\ge0}\frac{q^{3s^2+s}}{(q^3;q^3)_s}
		\sum_{r\ge0}q^{r^2-3sr}\qb{3s-r-1}{r}\\
		&=\sum_{s\ge0}\frac{q^{3s^2+s}\mathsf B_{3s}}
		{(q^3;q^3)_s}
		=\mathscr S_{\mathsf B}(q).
	\end{aligned}
	\]

	For $F_9$, the two inner sums in \eqref{eq:F9} are
	$\mathsf A_{3s}$ and $\mathsf A_{3s+1}$, respectively.
  Therefore
	\[
	F_9
	=(1+q)\mathscr S_{\mathsf A}(q^2)
	+q^2\sum_{s\ge0}
	\frac{q^{3s^2+5s}\mathsf A_{3s+1}}{(q^3;q^3)_s}.
	\]
	The recurrence \eqref{eq:hrec}, with $m=3s+1$, gives
	\[
	\mathsf A_{3s+1}
	=\mathsf A_{3s}+q^{-3s}\mathsf A_{3s-1}.
	\]
	Consequently,
	\[
	\begin{aligned}
		\sum_{s\ge0}
		\frac{q^{3s^2+5s}\mathsf A_{3s+1}}{(q^3;q^3)_s}
		&=\sum_{s\ge0}
		\frac{q^{3s^2+5s}\mathsf A_{3s}}{(q^3;q^3)_s}
		+\sum_{s\ge0}
		\frac{q^{3s^2+2s}\mathsf A_{3s-1}}{(q^3;q^3)_s}\\
		&=\mathscr S_{\mathsf A}(q^5)
		+\mathscr T_{\mathsf A}(q^2).
	\end{aligned}
	\]
	This proves
	\[
	F_9=(1+q)\mathscr S_{\mathsf A}(q^2)
	+q^2\mathscr S_{\mathsf A}(q^5)
	+q^2\mathscr T_{\mathsf A}(q^2).
	\]

	Finally, multiplying \eqref{eq:F10} by $q^2$ and
	grouping the terms according to $s$, we obtain
	\[
	\begin{aligned}
		q^2F_{10}
		&=(1+q)\sum_{s\ge0}
		\frac{q^{3s^2+2s}}{(q^3;q^3)_s}
		\sum_{r\ge0}q^{r^2-3sr}\qb{3s-r-1}{r}\\
		&\quad
		+q^2\sum_{s\ge0}
		\frac{q^{3s^2+5s}}{(q^3;q^3)_s}
		\sum_{r\ge0}q^{r^2-(3s+1)r}\qb{3s-r}{r}\\
		&=(1+q)\mathscr S_{\mathsf B}(q^2)
		+q^2\sum_{s\ge0}
		\frac{q^{3s^2+5s}\mathsf B_{3s+1}}{(q^3;q^3)_s}. \qquad ({\rm by}\ \eqref{2-17})
	\end{aligned}
	\]
 Applying \eqref{eq:hrec} once more,
	\[
	\mathsf B_{3s+1}
	=\mathsf B_{3s}+q^{-3s}\mathsf B_{3s-1},
	\]
	gives
	\[
	q^2F_{10}
	=(1+q)\mathscr S_{\mathsf B}(q^2)
	+q^2\mathscr S_{\mathsf B}(q^5)
	+q^2\mathscr T_{\mathsf B}(q^2).
	\]

	Both recurrence substitutions include $s=0$:
	the prescribed values $\mathsf A_{-1}=0$ and
	$\mathsf B_{-1}=1$ give
	$\mathsf A_1=\mathsf A_0+\mathsf A_{-1}$ and
	$\mathsf B_1=\mathsf B_0+\mathsf B_{-1}$.
	Thus no initial term is omitted.
\end{proof}

Define the following four series explicitly
\begin{equation}\label{eq:asym-airy-series}
\begin{aligned}
\mathscr U_4&:=\mathscr S_u(q),\\
\mathscr V_4&:=\mathscr S_v(q),\\
\mathscr U_5&:=(1+q)\mathscr S_u(q^2)
 +q^2\mathscr S_u(q^5)+q^2\mathscr T_u(q^2),\\
\mathscr V_5&:=(1+q)\mathscr S_v(q^2)
 +q^2\mathscr S_v(q^5)+q^2\mathscr T_v(q^2).
\end{aligned}
\end{equation}
Substituting \eqref{ainv} term by term in \eqref{eq:asym-series}
gives
\begin{equation}\label{eq:asym-A-combination}
F_{2i-1}
=\frac{\mathsf C_-(\rho)\mathscr U_i
       +\mathsf C_+(\rho)\mathscr V_i}{2},
\qquad i=4,5.
\end{equation}

Define
\begin{equation*}
	N_4:=\N(0,1),\qquad N_5:=\N(0,2)+q^2\N(-2,5).
\end{equation*}

\begin{proposition}\label{connections}
For $i=4,5$, the series in \eqref{eq:asym-airy-series} satisfy
\begin{equation}\label{system}
\begin{pmatrix}\tau_1&\tau_2\\\tau_3&\tau_4\end{pmatrix}
\binom{\mathscr U_i}{\mathscr V_i}
=2N_i\binom{J_1}{J_3},
\end{equation}
The determinant of the coefficient matrix is nonzero.
\end{proposition}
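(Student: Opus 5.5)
The plan is to evaluate the two rows of \eqref{system} separately using Lemma~\ref{lem:directions}. I will then reduce each resulting combination of $\N$-values to $N_i$ by the contiguous relations \eqref{cont1}--\eqref{cont2} and the recurrence \eqref{Nfrec}. The determinant claim needs no new work: \eqref{dettheta} and Lemma~\ref{binary} already give $\tau_1\tau_4-\tau_2\tau_3=-2\rho\Theta=-6\rho J_3^3/J_1\neq0$.

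\emph{First row.} For $i=4$, \eqref{firstS} with $k=1$ gives $\tau_1\mathscr U_4+\tau_2\mathscr V_4=2J_1\N(0,1)=2J_1N_4$ directly. For $i=5$, I apply \eqref{firstS} with $k=2,5$ and \eqref{firstT} with $k=2$, term by term in \eqref{eq:asym-airy-series}. This gives
\[
2J_1\bigl((1+q)\N(0,2)+q^2\N(0,5)+q^2\N(1,2)\bigr).
\]
It remains to show this equals $2J_1\bigl(\N(0,2)+q^2\N(-2,5)\bigr)$. Applying \eqref{cont1} at $(-2,5)$ and at $(-1,5)$ gives
\[
\N(-2,5)-\N(-1,5)=q^{-1}\N(0,2),\qquad \N(-1,5)-\N(0,5)=\N(1,2).
\]
Adding these yields $q^2\N(-2,5)=q\N(0,2)+q^2\N(0,5)+q^2\N(1,2)$, which is exactly what is needed.

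\emph{Second row.} First I identify $\tau_3,\tau_4$ with the theta values \eqref{h-71}. We have $\mathsf c_1=\thp_{q^{3/2}}(q^{1/4})=\tau_3$ and $\mathsf d_1=\tau_4$. Because $\thp_p(z)=\thp_p(p/z)$, it also follows that $\mathsf c_2=\thp_{q^{3/2}}(q^{5/4})=\tau_3$ and $\mathsf d_2=\tau_4$.

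For $i=4$, \eqref{secondS} with $k=1$ gives $2J_3\N(0,1)=2J_3N_4$.

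For $i=5$, I use $k=2$ throughout: \eqref{secondS} for the $\mathscr S(q^2)$ terms, \eqref{shiftS} for the $\mathscr S(q^5)$ terms, and \eqref{shiftT} for the $\mathscr T(q^2)$ terms. This produces
\[
2J_3\bigl((1+q)\N(1,-1)+q^6\N(4,-7)-q^3\N(2,-4)\bigr).
\]

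The main obstacle is the remaining identity
\[
(1+q)\N(1,-1)+q^6\N(4,-7)-q^3\N(2,-4)=\N(0,2)+q^2\N(-2,5).
\]
Unlike the first row, its two sides live in different $b$-lines ($b\equiv 2\pmod 3$ with different residues of $a$). My plan is to first move everything to a common $b$ using \eqref{Nup} and \eqref{cont2}, for instance expressing all terms through $\N(\cdot,-1)$ or $\N(\cdot,2)$. I would then collapse the resulting fixed-$b$ combination with the three-term recurrence \eqref{Nfrec} together with \eqref{cont1}. If this bookkeeping becomes unwieldy, the fallback is a direct summation argument. I would shift $(r,s)$ in the double sums \eqref{Ndef} and pair terms so that factors $(1-q^r)$ and $(1-q^{3s})$ cancel the Pochhammer symbols, exactly as in the proofs of \eqref{cont1}--\eqref{cont2}. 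Once this identity is established, \eqref{system} holds for $i=5$, and the proposition follows.
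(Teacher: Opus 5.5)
Your route is the paper's route, and everything you actually compute is correct. That includes the first row for $i=5$, the row-two expression $(1+q)\N(1,-1)+q^6\N(4,-7)-q^3\N(2,-4)$ obtained from \eqref{secondS}, \eqref{shiftS}, \eqref{shiftT} at $k=2$, and your justification of $(\mathsf c_2,\mathsf d_2)=(\tau_3,\tau_4)$ via $\thp_p(z)=\thp_p(p/z)$, which the paper uses without comment. The determinant claim is also fine.

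\textbf{The gap.} The identity you call the main obstacle is only planned, never proved. As written, the second row for $i=5$ is therefore not established, and that identity is the only step in this proposition that needs real work.

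\textbf{How to close it.} It is far easier than your plan suggests. You need no common $b$-line, no \eqref{Nup}, and no \eqref{Nfrec}, because \eqref{cont2} already links the $b$-lines $-1$, $2$, $5$ directly.

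First, \eqref{cont1} at $(2,-4)$ and at $(1,-1)$ give
\[
q^6\N(4,-7)-q^3\N(2,-4)=-q^3\N(3,-4)=-q\N(1,-1)+q\N(2,-1),
\]
so your left side equals $\N(1,-1)+q\N(2,-1)$.

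Next, \eqref{cont2} at $(1,-1)$ gives $\N(1,-1)=\N(1,2)+q^2\N(-2,5)$. Also, \eqref{cont1} at $(0,2)$ gives $q\N(2,-1)=\N(0,2)-\N(1,2)$.

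Adding these two relations yields $\N(0,2)+q^2\N(-2,5)=N_5$, as required. The paper performs the same reduction through four displayed contiguous identities. It passes through the expanded form $(1+q)\N(0,2)+q^2\N(0,5)+q^2\N(1,2)$, so that both rows visibly coincide, but the substance is identical.
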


\begin{proof}
For $i=4$, \eqref{firstS} and \eqref{secondS} at $k=1$ give the
two rows directly. For $i=5$, substitution of
\eqref{eq:asym-airy-series} into \eqref{firstS} and \eqref{firstT}
gives
\[
\frac{\tau_1\mathscr U_5+\tau_2\mathscr V_5}{2J_1}
=(1+q)\N(0,2)+q^2\N(0,5)+q^2\N(1,2).
\]
For the second row, use \eqref{secondS}, \eqref{shiftS}, and
\eqref{shiftT} at $k=2$, where
$(\mathsf c_2,\mathsf d_2)=(\tau_3,\tau_4)$. This gives
\[
\frac{\tau_3\mathscr U_5+\tau_4\mathscr V_5}{2J_3}
=(1+q)\N(1,-1)+q^6\N(4,-7)-q^3\N(2,-4).
\]
The contiguous relations \eqref{cont1}--\eqref{cont2} imply
\begin{align*}
(1+q)\N(1,-1)&+q^6\N(4,-7)-q^3\N(2,-4)
  =\N(1,-1)+q\N(2,-1),\\
\N(1,-1)&=q\N(0,2)+(1+q^2)\N(1,2)+q^2\N(0,5),\\
q\N(2,-1)&=\N(0,2)-\N(1,2),\\
\N(-2,5)&=q^{-1}\N(0,2)+\N(1,2)+\N(0,5).
\end{align*}
Consequently, both row expressions equal
\begin{equation}\label{eq:asym-N5-reduction}
(1+q)\N(0,2)+q^2\N(0,5)+q^2\N(1,2)
=\N(0,2)+q^2\N(-2,5)=N_5.
\end{equation}
This proves \eqref{system}. Equations  
\eqref{binaryeval} and \eqref{dettheta}  give the determinant $-6\rho+O(\rho^5)$.
\end{proof}

\begin{proof}[Proof of Theorem~\ref{thm:asym}]
The dual evaluations in \cite[Theorems 1.2--1.3]{Xia}, after
grouping the appropriate residue classes, give
\begin{equation}\label{N45}
N_4=P_4\frac{J_3^2}{J_1J_9}=P_4\Delta,\qquad
N_5=P_5\frac{J_3^2}{J_1J_9}=P_5\Delta,
\end{equation}
where \eqref{Delta} gives the second equality in each case.
Apply Proposition~\ref{prop:multiplier} to \eqref{system} with
$(\mathscr U,\mathscr V)=(\mathscr U_i,\mathscr V_i)$,
$\nu=N_i$, and $\lambda=1$. Equations  
\eqref{v-1}, \eqref{eq:asym-A-combination}, and \eqref{N45} yield
\[
F_{2i-1}=N_i\mathcal M(1)
=N_i\frac{G(q^9)}\Delta=P_iG(q^9),\qquad i=4,5.
\]

To evaluate the remaining two sums, we use the Schur
decompositions already established in
\eqref{h-8} and \eqref{h-10}--\eqref{h-11}.
Taking \(a=0\) and \(b=k\) in \eqref{h-8} gives
\begin{equation}\label{eq:general-Schur-S}
	\N(0,k)
	=
	G(q)\mathscr S_{\mathsf A}(q^k)
	+
	H(q)\mathscr S_{\mathsf B}(q^k).
\end{equation}

For \(a=1\), the same argument as in
\eqref{h-10}--\eqref{h-11}, with the additional factor

\(q^{ks}\), gives
\[
\begin{aligned}
	\N(1,k)
	&=
	H(q)
	+\sum_{s\ge1}
	\frac{q^{3s^2+ks}}{(q^3;q^3)_s}
	\left\{
	G(q)\mathsf A_{3s-1}
	+H(q)\mathsf B_{3s-1}
	\right\}  \\
	&=
	G(q)\mathscr T_{\mathsf A}(q^k)
	+
	H(q)\mathscr T_{\mathsf B}(q^k).
\end{aligned}
\]
Indeed, for \(s\ge1\), identity \eqref{eq:schur} applies with
\(m=3s-1\). For \(s=0\), the contribution to \(\N(1,k)\) is
$H(q)$, 
whereas the corresponding contribution on the right is
\[
G(q)\mathsf A_{-1}+H(q)\mathsf B_{-1}=H(q),
\]
since \(\mathsf A_{-1}=0\) and \(\mathsf B_{-1}=1\).
Thus
\begin{equation}\label{eq:general-Schur-T}
	\N(1,k)
	=
	G(q)\mathscr T_{\mathsf A}(q^k)
	+
	H(q)\mathscr T_{\mathsf B}(q^k).
\end{equation}

%
%
%
%

Combining these equalities with \eqref{eq:asym-series} and
\eqref{eq:asym-N5-reduction} gives
\[
G(q)F_{2i-1}+q^2H(q)F_{2i}=N_i=P_i\Delta,
\qquad i=4,5.
\]
Substitute the already established value of $F_{2i-1}$ and use
\eqref{Delta}. We obtain
\[q^2H(q)(F_{2i}-P_iH(q^9))=0.\]
Since $\mathbb C[[q]]$ has no zero divisors and $H(q)$ has
constant term $1$,
$$F_{2i}=P_iH(q^9).$$
These are precisely
\eqref{eq:R7}--\eqref{eq:R10} for \(0<q<1\).

It remains to remove the restriction that \(q\) be real.  Each side of
\eqref{eq:R7}--\eqref{eq:R10} is holomorphic in the unit disk
$
\mathbb D:=\{q\in\mathbb C:|q|<1\}.
$
Indeed, the defining series converge locally uniformly on \(\mathbb D\),
whereas the infinite products on the right-hand sides converge locally
uniformly there and their denominator factors do not vanish.  Since the
identities hold for every \(q\in(0,1)\), the identity theorem extends
them to the whole disk \(|q|<1\).
\end{proof}

\section{The four Hickerson identities}\label{sec:hickerson}

For $\zeta\in\{\omega,\omega^2\}$, define
\begin{equation}\label{4-19}
 \mathsf H_{1,\zeta}:=\frac{\Ec}{(\zeta^2q,\zeta q^3;q^3)_\infty},\qquad
 \mathsf H_{2,\zeta}:=\frac{\Ec}{(\zeta q^2,\zeta^2q^3;q^3)_\infty}.
\end{equation}

\begin{lemma}\label{lem:hickerson-first}
For $\zeta\in\{\omega,\omega^2\}$,
\[
 \mathsf H_{1,\zeta}=\mathcal S(1,1)-\zeta q\mathcal S(2,4).
\]
\end{lemma}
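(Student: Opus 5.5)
The plan is to reduce the claim to two $\zeta$-free identities and then prove those by a parameter-recurrence and uniqueness argument. The reduction uses divided differences at the nodes $\omega,\omega^2$.

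\emph{Reduction.} Put $g(z):=\Ec/(z^2q,zq^3;q^3)_\infty$, so that $\mathsf H_{1,\zeta}=g(\zeta)$. The right-hand side $\mathcal S(1,1)-\zeta q\,\mathcal S(2,4)$ is affine in $\zeta$ with $\zeta$-free coefficients. Since the nodes $\omega,\omega^2$ are distinct, the lemma for both $\zeta$ simultaneously is equivalent to the pair
\[
-q\,\mathcal S(2,4)=g[\omega,\omega^2],\qquad
\mathcal S(1,1)=g(\omega)-\omega\, g[\omega,\omega^2].
\]
To make these concrete, I would expand $g$ in $z$ by Euler's formula:
\[
\frac1{(z^2q;q^3)_\infty(zq^3;q^3)_\infty}
=\sum_{a,b\ge0}\frac{q^{a+3b}z^{2a+b}}{(q^3;q^3)_a(q^3;q^3)_b}.
\]
Write $g(z)=\Ec\sum_n c_nz^n$ and $C_j:=\sum_{n\equiv j\ (3)}c_n$. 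Using $\zeta^2=-1-\zeta$ from \eqref{cyclo}, we get $g(\zeta)=\Ec\{(C_0-C_2)+\zeta(C_1-C_2)\}$. The target therefore becomes
\[
\mathcal S(1,1)=\Ec(C_0-C_2),\qquad -q\,\mathcal S(2,4)=\Ec(C_1-C_2).
\]
Here each $C_j$ is the double sum over $(a,b)$ restricted to $2a+b\equiv j\pmod 3$. As a sanity check of the product side, I would first confirm the equivalent product form in \eqref{H1}. This follows from \eqref{cyclo} via $(q;q^3)_\infty(\omega q;q^3)_\infty(\omega^2q;q^3)_\infty=(q^3;q^9)_\infty$ and $(\zeta q^3,\zeta^2q^3,q^3;q^3)_\infty=J_9$.

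\emph{Parametric embedding.} Proving the two residue-class identities directly is awkward, so I would embed them in families indexed by $a$ with $b$ fixed. On the product side, I would insert $x=q^{a}$-type shifts into the Euler parameters, for instance replacing $z^2q$ by $z^2q^{a}$ with a compensating shift elsewhere. I would then show that the resulting $C$-combinations, call them $\Gamma_{j}(a)$, satisfy the same three-term relation \eqref{Sfrec} as $\mathcal S(a,b)$ for $b=1$ and for $b=4$. Equivalently, they should satisfy the contiguous relations \eqref{Scont1}--\eqref{Scont2}, which I would check termwise using $(q^3;q^3)_a=(1-q^{3a})(q^3;q^3)_{a-1}$, in the style of the proof of Lemma~\ref{lem:schur-transform}. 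With a common recurrence in place, Lemma~\ref{lem:unique} applies to the vectors $(\cdot(a),\cdot(a+1),\cdot(a+2))$, whose transition matrices have entries in $\mathbb C[[q]]$ after the $q^{a+1-b}$ normalization. It then suffices to check boundary behaviour as $a\to\infty$. There $\mathcal S(a,b)\to\sum_s q^{3s^2+bs}/(q^3;q^3)_s$, a Rogers--Ramanujan series in $q^3$, and the product side should have the same limit up to $O(q^{a})$.

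\emph{Main obstacle.} The difficult step is choosing the parametric deformation of the product side so that the relation \eqref{Sfrec} holds exactly and the boundary terms match. The cube-root trisection does not commute naively with the shifts. If this direct route stalls, I would fall back on the paper's earlier machinery. That means expressing $\mathcal S(a,b)$ through a Schur-type single-sum decomposition analogous to Lemma~\ref{lem:schur}, or using the $\mathcal S$-dual evaluations of \cite{Xia}. I would then verify the two $\zeta$-free identities as theta-quotient identities, using \texttt{thetaids} as in Proposition~\ref{prop:products}. Finally, I would extend from formal/real $q$ to $|q|<1$ by analyticity, as at the end of Section~\ref{sec:asymmetric}.
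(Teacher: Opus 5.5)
Your reduction is correct, but it only reformulates the problem. Since $\mathsf H_{1,\zeta}=g(\zeta)$ and the right side is affine in $\zeta$, the lemma is equivalent to $\mathcal S(1,1)=\Ec(C_0-C_2)$ and $-q\,\mathcal S(2,4)=\Ec(C_1-C_2)$, and you give no argument for these two identities.

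\textbf{The embedding is never built, and its uniqueness step fails.} You never construct the ``parametric embedding'' (you flag it yourself as the main obstacle). Even if you did, the uniqueness mechanism you propose would not work. Fix $b$, let $\epsilon$ be any solution of \eqref{Sfrec}, and put $\delta_a:=\epsilon_a-\epsilon_{a-1}$. Then $\delta_{a+1}=q\delta_a+q^{a+1-b}\delta_{a-1}+q^{2a-b}\epsilon_a$. Consequently:
\begin{itemize}
\item every solution has a $q$-adic limit as $a\to\infty$;
\item taking this limit is a $\mathbb C((q))$-linear functional on the three-dimensional solution space;
\item its kernel is at least two-dimensional, so there are nonzero solutions tending to $0$, generically of order only about $a$.
\end{itemize}
Hence agreement of a product-side family with $\mathcal S(a,b)$ ``up to $O(q^a)$'' as $a\to\infty$ determines nothing. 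Consistent with this, running \eqref{Sfrec} toward lower $a$ requires division by $q^{a+1-b}$, so Lemma~\ref{lem:unique} is not available in the form you invoke.

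\textbf{The fallback fails too.} \texttt{thetaids} only verifies identities among products. $\mathcal S(1,1)$ and $\mathcal S(2,4)$ are double sums with no independently known product forms, and the lemma is exactly what supplies them.

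\textbf{What is missing.} You need an already-established divided-difference evaluation whose node values are the twisted products themselves. The paper uses \cite[Theorem 5.1]{Xia}: with nodes $z^{(2)}_{n,j}=\omega q^{n+2+j}+\omega^2q^{2n+2-j}$,
$Z_n^{(2)}:=\Ec(q;q)_n\mathcal P_{3,q^{3n+4}}[z^{(2)}_{n,0},\ldots,z^{(2)}_{n,n}]=(1+q^{n+1})\mathcal S(2n+2,3n+4)+q^{2n+2}\mathcal S(2n+3,3n+7)$.
\begin{itemize}
\item At $n=0$ this gives $P_5=\mathcal S(1,4)+q\mathcal S(2,4)$.
\item At $n=1,2$ the node values are elementary polynomial multiples of $\mathsf H_{1,\omega}$ and $\mathsf H_{1,\omega^2}$, and, at the middle node $-q^5$ when $n=2$, of $P_5$.
\item The contiguous relations \eqref{Scont1}--\eqref{Scont2} rewrite the sum sides of $Z_1^{(2)}$ and $Z_2^{(2)}$ in terms of $\mathcal S(1,1)$, $\mathcal S(2,4)$ and $P_5$.
\end{itemize}
Setting $e_\zeta:=\mathsf H_{1,\zeta}-\mathcal S(1,1)+\zeta q\,\mathcal S(2,4)$, this yields a homogeneous $2\times 2$ system in $(e_\omega,e_{\omega^2})$. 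Its determinant is $-(2\omega+1)(1+q+q^3)$, which has nonzero constant term, so $e_\omega=e_{\omega^2}=0$. Only $n\le 2$ is needed, so no boundary or uniqueness argument arises at all.
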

\begin{proof}

For $n\ge0$ and $0\le j\le n$, define the nodes
\begin{equation*}
 z_{n,j}^{(2)}:=\omega q^{n+2+j}+\omega^2q^{2n+2-j}.
\end{equation*}
Then define
\begin{equation*}
 Z_n^{(2)}:=\Ec(q;q)_n
 \mathcal P_{3,q^{3n+4}}[z_{n,0}^{(2)},\ldots,z_{n,n}^{(2)}].
\end{equation*}
Xia
\cite[Theorem 5.1]{Xia} proved that
\begin{equation}\label{XiaZ2}
 Z_n^{(2)}=(1+q^{n+1})\mathcal S(2n+2,3n+4)
 +q^{2n+2}\mathcal S(2n+3,3n+7).
\end{equation}
Setting $n=0$ in \eqref{XiaZ2}, we obtain
\[
Z_0^{(2)}
=(1+q)\mathcal S(2,4)+q^2\mathcal S(3,7).
\]
On the other hand, \eqref{Scont1} with $(a,b)=(1,4)$ gives
\[
q^2\mathcal S(3,7)
=\mathcal S(1,4)-\mathcal S(2,4).
\]
Consequently,
\[
Z_0^{(2)}
=\mathcal S(1,4)+q\mathcal S(2,4).
\]

We now evaluate $Z_0^{(2)}$ from its defining divided
difference. By \eqref{eq:kernel-unified},
\begin{align*}
Z_0^{(2)}
&=\mathcal E_3\,
\mathcal P_{3,q^4}(\omega q^2+\omega^2q^2)
\\
&=\frac{\mathcal E_3}
{(\omega q^2,\omega^2q^2;q^3)_\infty}
\\
	&=\frac{J_3}{J_1}
	\frac{(q^2;q^3)_\infty}{(q^6;q^9)_\infty}=P_5,
\end{align*}
 Thus
\[
P_5
=\mathcal S(1,4)+q\mathcal S(2,4)
=\frac{\mathcal E_3}
{(\omega q^2,\omega^2q^2;q^3)_\infty}.
\]

Applying \eqref{Scont1} at $(a,b)=(1,4)$ and
\eqref{Scont2} at $(a,b)=(1,1)$, respectively, yields
\begin{align}
	q^2\mathcal S(3,7)
	&=\mathcal S(1,4)-\mathcal S(2,4)
	=P_5-(1+q)\mathcal S(2,4),\label{m-1}\\
	q^4\mathcal S(4,7)
	&=\mathcal S(1,1)-\mathcal S(1,4)
	=\mathcal S(1,1)+q\mathcal S(2,4)-P_5.\label{m-2}
\end{align}
For $n=1$, \eqref{XiaZ2} reads
\begin{align}\label{m-3}
Z_1^{(2)}
=(1+q^2)\mathcal S(4,7)+q^4\mathcal S(5,10).
\end{align}
By \eqref{Scont1} at $(a,b)=(3,7)$,
\[
q^4\mathcal S(5,10)
=\mathcal S(3,7)-\mathcal S(4,7),
\]
 which, together with \eqref{m-3}, yields 
\begin{align}\label{m-4}
Z_1^{(2)}=q^2\mathcal S(4,7)+\mathcal S(3,7).
\end{align}
Substituting \eqref{m-1} and \eqref{m-2} into \eqref{m-4}, we obtain
\begin{equation}\label{H1low1}
	Z_1^{(2)}
	=\frac{\mathcal S(1,1)-\mathcal S(2,4)}{q^2}.
\end{equation}

For $n=2$, \eqref{XiaZ2} gives
\[
Z_2^{(2)}
=(1+q^3)\mathcal S(6,10)+q^6\mathcal S(7,13).
\]
Applying \eqref{Scont1} at $(a,b)=(5,10)$ gives
\[
q^6\mathcal S(7,13)
=\mathcal S(5,10)-\mathcal S(6,10),
\]
so
\[
Z_2^{(2)}=q^3\mathcal S(6,10)+\mathcal S(5,10).
\]

We now evaluate the two series on the right.
First, \eqref{Scont1} at $(a,b)=(2,4)$ gives
\[
\begin{aligned}
	q\mathcal S(3,4)
	&=q\mathcal S(2,4)-q^4\mathcal S(4,7)\\
	&=P_5-\mathcal S(1,1).
\end{aligned}
\]
Using \eqref{Scont2} at $(a,b)=(3,4)$, we then obtain
\[
\begin{aligned}
	q^9\mathcal S(6,10)
	&=q^2\bigl(\mathcal S(3,4)-\mathcal S(3,7)\bigr)\\
	&=q\bigl(P_5-\mathcal S(1,1)\bigr)
	-\bigl(P_5-(1+q)\mathcal S(2,4)\bigr)\\
	&=-q\mathcal S(1,1)
	+(1+q)\mathcal S(2,4)+(q-1)P_5.
\end{aligned}
\]
Also, the earlier relation for $\mathcal S(5,10)$ yields
\[
\begin{aligned}
	q^8\mathcal S(5,10)
	&=q^4\mathcal S(3,7)-q^4\mathcal S(4,7)\\
	&=q^2\bigl(P_5-(1+q)\mathcal S(2,4)\bigr)
	-\bigl(\mathcal S(1,1)+q\mathcal S(2,4)-P_5\bigr)\\
	&=-\mathcal S(1,1)
	-(q+q^2+q^3)\mathcal S(2,4)+(1+q^2)P_5.
\end{aligned}
\]
Consequently,
\begin{equation}\label{H1low2}
	Z_2^{(2)}
	=\frac{(1+q^3)\{P_5-\mathcal S(1,1)\}
		-q\mathcal S(2,4)}{q^8}.
\end{equation}

By \eqref{eq:divided-difference},
\begin{align*}
Z_n^{(2)}
=(q;q)_n
\sum_{j=0}^{n}
\frac{\mathcal E_3\,
	\mathcal P_{3,q^{3n+4}}(z_{n,j}^{(2)})}
{\displaystyle\prod_{\substack{0\le k\le n\\k\ne j}}
	(z_{n,j}^{(2)}-z_{n,k}^{(2)})}.
\end{align*}
For $n=1$,
\begin{align}\label{4-13}
Z_1^{(2)}
=
\frac{
	(1-\omega^2q)\mathsf H_{1,\omega}
	-(1-\omega q)\mathsf H_{1,\omega^2}}
{q^3(\omega-\omega^2)},
\end{align}

Define
\begin{align}
	e_1&:=\mathsf H_{1,\omega}
	-\mathcal S(1,1)+\omega q\mathcal S(2,4), \label{4-14}\\
	e_2&:=\mathsf H_{1,\omega^2}
	-\mathcal S(1,1)+\omega^2q\mathcal S(2,4). \label{4-15}
\end{align}
By \eqref{4-13}, \eqref{4-14}, and \eqref{4-15},
\[
Z_1^{(2)}
=
\frac{\mathcal S(1,1)-\mathcal S(2,4)}{q^2}
+
\frac{(1-\omega^2q)e_1-(1-\omega q)e_2}
{q^3(\omega-\omega^2)}.
\]
Comparison with \eqref{H1low1} yields
\[
(1-\omega^2q)e_1-(1-\omega q)e_2=0.
\]

For $n=2$,
\begin{align}
q^8Z_2^{(2)}
=(1+q^3)P_5
+\frac{
	(\omega^2q^3-\omega)\mathsf H_{1,\omega}
	+(\omega^2-\omega q^3)\mathsf H_{1,\omega^2}}
{\omega-\omega^2}.\label{4-16}
\end{align}
Substituting \eqref{4-14}
 and \eqref{4-15} into \eqref{4-16} yields
\[
\begin{aligned}
	q^8Z_2^{(2)}
	&=(1+q^3)\bigl(P_5-\mathcal S(1,1)\bigr)
	-q\mathcal S(2,4)\\
	&\quad+
	\frac{(\omega^2q^3-\omega)e_1
		+(\omega^2-\omega q^3)e_2}
	{\omega-\omega^2}.
\end{aligned}
\]
Comparison with \eqref{H1low2} gives
\[
(\omega^2q^3-\omega)e_1
+(\omega^2-\omega q^3)e_2=0.
\]

We have therefore obtained the homogeneous system
\[
\begin{pmatrix}
	1-\omega^2q & -(1-\omega q)\\
	\omega^2q^3-\omega & \omega^2-\omega q^3
\end{pmatrix}
\binom{e_1}{e_2}
=\binom00.
\]
The determinant is $-(2\omega+1)(1+q+q^3)$, which has a nonzero
constant term. Hence $e_1=e_2=0$ in $\mathbb Q(\omega)[[q]]$. Thus,
 \begin{align*}
  \mathsf H_{1,\omega}
 	&=\mathcal S(1,1)-\omega q\mathcal S(2,4), \\
 \mathsf H_{1,\omega^2}
 	&=\mathcal S(1,1)-\omega^2q\mathcal S(2,4). 
 \end{align*}
This proves the two asserted identities.
\end{proof}

\begin{lemma}\label{lem:hickerson-second}
For $\zeta\in\{\omega,\omega^2\}$,
\[
 \mathsf H_{2,\zeta}=-\zeta^2\mathcal S(0,-1)-\zeta\mathcal S(0,2).
\]
\end{lemma}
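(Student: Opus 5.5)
We follow the template of Lemma~\ref{lem:hickerson-first}, with the companion divided-difference family from \cite{Xia} whose base node evaluates to $\mathsf H_{2,\zeta}$. Concretely, for $n\ge0$ we take nodes of the shape
\[
z_{n,j}^{(1)}:=\omega q^{n+1+j}+\omega^2q^{2n+1-j}\qquad(0\le j\le n),
\]
or whatever normalization Xia uses. We set
\[
Z_n^{(1)}:=\Ec(q;q)_n\,\mathcal P_{3,c_n}[z_{n,0}^{(1)},\ldots,z_{n,n}^{(1)}],
\]
with $c_n$ the product of the two node summands. We then invoke the analogue of \eqref{XiaZ2} from \cite[Section 5]{Xia}, which expresses $Z_n^{(1)}$ as a short linear combination of $\mathcal S(a,b)$ with $(a,b)$ growing linearly in $n$. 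At $n=0$ the node is $(\omega+\omega^2)q=-q$ and $c_0=q^2$. So the single evaluation is
\[
\Ec/(\omega q,\omega^2q;q^3)_\infty=J_3(q^3;q^3)_\infty^{-1}\cdots=P_4\text{-type},
\]
and more precisely $\Ec(q;q^3)_\infty/(q^3;q^9)_\infty$. This gives one known linear relation among the $\mathcal S$'s.

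Next, using only the contiguous relations \eqref{Scont1}--\eqref{Scont2}, we rewrite the $n=0,1,2$ instances of Xia's formula in terms of the basis $\mathcal S(0,-1)$, $\mathcal S(0,2)$ and the product constant from $n=0$. This is exactly the bookkeeping done for $\mathcal S(1,4)$, $\mathcal S(2,4)$, $\mathcal S(3,7)$, $\mathcal S(4,7)$, $\mathcal S(5,10)$, $\mathcal S(6,10)$ in the previous proof. The target is formulas analogous to \eqref{H1low1} and \eqref{H1low2}:
\[
Z_1^{(1)}=q^{-\alpha}\bigl(\text{lin.\ comb.\ of }\mathcal S(0,-1),\mathcal S(0,2)\bigr),\qquad
Z_2^{(1)}=q^{-\beta}\bigl(\ldots+\text{product}\bigr).
\]

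In parallel, we expand $Z_1^{(1)}$ and $Z_2^{(1)}$ directly from \eqref{eq:divided-difference}. The nodes are $\omega a+\omega^2 b$, where the unordered pair $\{a,b\}$ gives the product value $\Ec/(\omega a',\omega^2 b';q^3)_\infty$. After pulling out finite factors, each node value is $\mathsf H_{2,\omega}$ or $\mathsf H_{2,\omega^2}$ times an explicit finite polynomial in $q,\omega$. The $n=2$ middle node ($j=1$) collapses, as in \eqref{4-16}, to a multiple of the $n=0$ product. We then define
\[
e_1:=\mathsf H_{2,\omega}+\omega^2\mathcal S(0,-1)+\omega\mathcal S(0,2),\qquad
e_2:=\mathsf H_{2,\omega^2}+\omega\mathcal S(0,-1)+\omega^2\mathcal S(0,2).
\]
Comparing the two computations gives a homogeneous $2\times2$ system for $(e_1,e_2)$ with entries in $\mathbb Q(\omega)[q]$. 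Once we check that its determinant has nonzero constant term, as with $-(2\omega+1)(1+q+q^3)$ before, we conclude $e_1=e_2=0$ in $\mathbb Q(\omega)[[q]]$.

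The main obstacle is the middle step. We must identify the precise form of Xia's second divided-difference theorem, including which node family and which $\mathcal S$-indices appear. We then have to reduce all the resulting $\mathcal S(a,b)$ to the two-dimensional span of $\mathcal S(0,-1)$ and $\mathcal S(0,2)$ modulo the known product. If the direct chain of \eqref{Scont1}--\eqref{Scont2} does not close, the first thing to try is the fixed-$b$ recurrence \eqref{Sfrec}, which expresses $\mathcal S(a+1,b)$ through $\mathcal S(a-2,b)$, $\mathcal S(a-1,b)$ and $\mathcal S(a,b)$. Failing that, we would add the $n=3$ instance to obtain an extra equation. The determinant check then becomes a short finite computation.
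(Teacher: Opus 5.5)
Your plan is essentially the paper's proof. Xia's Theorem~5.1 gives $Z_n^{(1)}=\mathcal S(2n+1,3n+2)$ with exactly your nodes and $c_n=q^{3n+2}$. Then \eqref{Scont1}, \eqref{Scont2} and a single use of \eqref{Sfrec} at $(a,b)=(2,2)$ reduce $Z_1^{(1)}$ and $Z_2^{(1)}$ to $\mathcal S(0,-1)$, $\mathcal S(0,2)$ and $P_4=\mathcal S(1,2)$. Comparing with the direct $n=1,2$ divided differences gives a homogeneous system in your $e_1,e_2$ with constant determinant $\omega^2-\omega$.

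One small slip: the $n=0$ node evaluates to $P_4$, not to $\mathsf H_{2,\zeta}$. It is the two $n=1$ nodes that give $\mathsf H_{2,\omega}$ and $\mathsf H_{2,\omega^2}$.
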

\begin{proof}
For $n\ge0$ and $0\le j\le n$, define
\begin{align*}
 z_{n,j}^{(1)}&:=\omega q^{n+1+j}+\omega^2q^{2n+1-j},\\
 Z_n^{(1)}&:=\Ec(q;q)_n
 \mathcal P_{3,q^{3n+2}}[z_{n,0}^{(1)},\ldots,z_{n,n}^{(1)}].
\end{align*}
The companion evaluation in \cite[Theorem 5.1]{Xia} gives
\begin{equation}\label{XiaZ1}
 Z_n^{(1)}=\mathcal S(2n+1,3n+2).
\end{equation}
The case $n=0$ of \eqref{XiaZ1} gives
\begin{align}\label{r-14}
Z_0^{(1)}=\mathcal S(1,2)=P_4
=\frac{\Ec}{(\omega q,\omega^2q;q^3)_\infty}.
\end{align}
Hence, the cases $n=1$ and $n=2$ of \eqref{XiaZ1} yield 
\begin{equation}\label{H2low}
	\begin{aligned}
		Z_1^{(1)}&=\frac{\mathcal S(0,-1)-\mathcal S(0,2)}{q^2},\\
		Z_2^{(1)}&=\frac{(1+q)P_4-q\mathcal S(0,-1)
			+(q-1)\mathcal S(0,2)}{q^6}.
	\end{aligned}
\end{equation}
Indeed, applying \eqref{Scont1} at $(a,b)=(1,2)$ and \eqref{Scont2} at
$(a,b)=(0,-1)$, respectively, gives
\begin{align}
	\mathcal S(1,2)- \mathcal S(2,2)=q^2 \mathcal S(3,5)
	=\mathcal S(0,-1)-\mathcal S(0,2).\label{r-11}
\end{align}
Setting $n=1$ in \eqref{XiaZ1} and using \eqref{r-11}, we obtain 
the  first equality of \eqref{H2low}.

For the second, two uses of \eqref{Scont1} give
\begin{align}\label{r-13}
 \mathcal S(5,8)=q^{-4}\{\mathcal S(3,5)-\mathcal S(4,5)\},
 \qquad \mathcal S(4,5)=q^{-3}\{\mathcal S(2,2)-\mathcal S(3,2)\}.
\end{align}
By \eqref{r-14} and \eqref{r-11},  
\begin{align}\label{r-16}
\mathcal S(2,2)=P_4+\mathcal S(0,2)-\mathcal S(0,-1).
\end{align}
Equation \eqref{Sfrec} at $(a,b)=(2,2)$ gives
\begin{align}\label{r-12}
	\mathcal S(3,2)
	&=-q\mathcal S(0,2)
	+(1+q+q^2)\mathcal S(2,2)
	\nonumber\\
	&=-q\mathcal S(0,2)
	+(1+q+q^2)
	\bigl(P_4+\mathcal S(0,2)-\mathcal S(0,-1)\bigr).
\end{align}
Setting $n=2$ in \eqref{XiaZ1} and then substituting
\eqref{r-11}, \eqref{r-16}, and \eqref{r-12} into
\eqref{r-13}, we obtain the second identity in
\eqref{H2low}.

For \(n=1\), the two nodes are
\[
z_{1,0}^{(1)}=\omega q^2+\omega^2q^3,
\qquad
z_{1,1}^{(1)}=\omega q^3+\omega^2q^2,
\]
and hence
\[
z_{1,0}^{(1)}-z_{1,1}^{(1)}
=q^2(1-q)(\omega-\omega^2).
\]
Moreover,
\[
\mathcal E_3
\mathcal P_{3,q^5}\bigl(z_{1,0}^{(1)}\bigr)
=\mathsf H_{2,\omega},
\qquad
\mathcal E_3
\mathcal P_{3,q^5}\bigl(z_{1,1}^{(1)}\bigr)
=\mathsf H_{2,\omega^2}.
\]
Therefore, the case $n=1$ gives
\begin{align}
	Z_1^{(1)}
	&=(1-q)\left\{
	\frac{\mathsf H_{2,\omega}}
	{z_{1,0}^{(1)}-z_{1,1}^{(1)}}
	+
	\frac{\mathsf H_{2,\omega^2}}
	{z_{1,1}^{(1)}-z_{1,0}^{(1)}}
	\right\} \notag\\
	&=\frac{\mathsf H_{2,\omega}
		-\mathsf H_{2,\omega^2}}
	{q^2(\omega-\omega^2)}.
	\label{H2div1}
\end{align}

For \(n=2\), the three nodes are
\[
z_{2,0}^{(1)}=\omega q^3+\omega^2q^5,\qquad
z_{2,1}^{(1)}=-q^4,\qquad
z_{2,2}^{(1)}=\omega q^5+\omega^2q^3.
\]
Their differences are
\begin{align*}
	z_{2,0}^{(1)}-z_{2,1}^{(1)}
	&=q^3(1-q)(\omega-\omega^2q),\\
	z_{2,0}^{(1)}-z_{2,2}^{(1)}
	&=q^3(1-q^2)(\omega-\omega^2),\\
	z_{2,1}^{(1)}-z_{2,2}^{(1)}
	&=q^3(1-q)(\omega q-\omega^2).
\end{align*}
The corresponding product values are
\begin{align*}
	\mathcal E_3
	\mathcal P_{3,q^8}\bigl(z_{2,0}^{(1)}\bigr)
	&=(1-\omega^2q^2)\mathsf H_{2,\omega^2},\\
	\mathcal E_3
	\mathcal P_{3,q^8}\bigl(z_{2,1}^{(1)}\bigr)
	&=(1+q+q^2)P_4,\\
	\mathcal E_3
	\mathcal P_{3,q^8}\bigl(z_{2,2}^{(1)}\bigr)
	&=(1-\omega q^2)\mathsf H_{2,\omega}.
\end{align*}
Consequently, the case $n=2$ yields
\begin{align*}
	Z_2^{(1)}
	&=(q;q)_2
	\left\{
	\frac{(1-\omega^2q^2)\mathsf H_{2,\omega^2}}
	{(z_{2,0}^{(1)}-z_{2,1}^{(1)})
		(z_{2,0}^{(1)}-z_{2,2}^{(1)})}
	\right.\\
	&\qquad\left.
	+\frac{(1+q+q^2)P_4}
	{(z_{2,1}^{(1)}-z_{2,0}^{(1)})
		(z_{2,1}^{(1)}-z_{2,2}^{(1)})}
	+\frac{(1-\omega q^2)\mathsf H_{2,\omega}}
	{(z_{2,2}^{(1)}-z_{2,0}^{(1)})
		(z_{2,2}^{(1)}-z_{2,1}^{(1)})}
	\right\}.
\end{align*}
Using \(\omega^2+\omega+1=0\), this simplifies to
\begin{equation}
	q^6Z_2^{(1)}
	=(1+q)P_4
	-\frac{q+\omega}{\omega-\omega^2}\mathsf H_{2,\omega}
	+\frac{q+\omega^2}{\omega-\omega^2}
	\mathsf H_{2,\omega^2}.
	\label{H2div2}
\end{equation}

Define
\begin{align*}
	e_1&:=\mathsf H_{2,\omega}
	+\omega^2\mathcal S(0,-1)
	+\omega\mathcal S(0,2),\\
	e_2&:=\mathsf H_{2,\omega^2}
	+\omega\mathcal S(0,-1)
	+\omega^2\mathcal S(0,2).
\end{align*}
Substitution into \eqref{H2div1} gives
\[
Z_1^{(1)}
=\frac{\mathcal S(0,-1)-\mathcal S(0,2)}{q^2}
+\frac{e_1-e_2}{q^2(\omega-\omega^2)}.
\]
Comparison with the first identity in \eqref{H2low} therefore gives
$
e_1-e_2=0.
$

Similarly, substitution into \eqref{H2div2} gives
\[
\begin{aligned}
	q^6Z_2^{(1)}
	&=(1+q)P_4-q\mathcal S(0,-1)
	+(q-1)\mathcal S(0,2)\\
	&\quad+
	\frac{-(q+\omega)e_1+(q+\omega^2)e_2}
	{\omega-\omega^2}.
\end{aligned}
\]
Comparison with the second identity in \eqref{H2low} yields
\[
-(q+\omega)e_1+(q+\omega^2)e_2=0.
\]
Thus
\[
\begin{pmatrix}
	1&-1\\
	-(q+\omega)&q+\omega^2
\end{pmatrix}
\binom{e_1}{e_2}
=\binom00.
\]
The determinant of the coefficient matrix is
 \(\omega^2-\omega\ne0\). 
Hence \(e_1=e_2=0\), and consequently
\[
\mathsf H_{2,\omega}
=-\omega^2\mathcal S(0,-1)-\omega\mathcal S(0,2),
\]
and
\[
\mathsf H_{2,\omega^2}
=-\omega\mathcal S(0,-1)-\omega^2\mathcal S(0,2).
\]
These are precisely the asserted identities. 
\end{proof}

\begin{proof}[Proof of Theorem~\ref{Hthm}]
Using \eqref{cyclo} in the definitions \eqref{4-19} gives
\begin{align*}
 \mathsf H_{1,\zeta}
 &=\frac{(q^6;q^9)_\infty(\zeta q,\zeta^2q^3;q^3)_\infty}
 {(q^2;q^3)_\infty},\\
 \mathsf H_{2,\zeta}
 &=\frac{(q^3;q^9)_\infty(\zeta^2q^2,\zeta q^3;q^3)_\infty}
 {(q;q^3)_\infty}.
\end{align*}
Lemma~\ref{lem:hickerson-first} now gives \eqref{H1}.
Multiplying the identity in Lemma~\ref{lem:hickerson-second} by
$-\zeta$ gives \eqref{H2}, since $\zeta^3=1$.
\end{proof}

\section{The four reflected identities with cube roots of unity}\label{sec:cyclotomic}
It suffices to treat $\zeta=\omega$ and then apply conjugation.

We prove Theorem~\ref{Kthm}, using the Airy identities already
established in Section~\ref{sec:asymmetric}. We first evaluate the
two required combinations of Nahm sums.

\begin{lemma}\label{duallemma}
One has
\begin{align}
 \mathcal N_{1,\omega}:=\N(1,-1)-\omega\N(0,2)
  &=(1-\omega)\Ec(\omega^2q^2,\omega q^3;q^3)_\infty,\label{dual1}\\
 \mathcal N_{2,\omega}:=\N(1,-2)+\omega^2q\N(-2,4)
  &=(1-\omega)\Ec(\omega q,\omega^2q^3;q^3)_\infty.\label{dual2}
\end{align}
\end{lemma}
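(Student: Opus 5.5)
The plan is to copy the strategy of Lemmas~\ref{lem:hickerson-first} and \ref{lem:hickerson-second}, with the product kernel $\mathcal P_{3,c}$ replaced by $\mathcal D_{3,c}$, since the right-hand sides of \eqref{dual1}--\eqref{dual2} are products in the numerator. For each $\zeta\in\{\omega,\omega^2\}$, set
\[
\mathsf L_{1,\zeta}:=\Ec(\zeta^2q^2,\zeta q^3;q^3)_\infty,\qquad \mathsf L_{2,\zeta}:=\Ec(\zeta q,\zeta^2q^3;q^3)_\infty .
\]
These are $\Ec\mathcal D_{3,c}(z)$ evaluated at the conjugate-pair nodes $z=\zeta^2q^2+\zeta q^3$ and $z=\zeta q+\zeta^2q^3$, with $c=q^5$ and $c=q^4$ respectively. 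It suffices to treat $\zeta=\omega$; the $\omega^2$ versions follow by conjugation. The proof therefore reduces to finding two independent linear relations between $(\mathsf L_{i,\omega},\mathsf L_{i,\omega^2})$ and Nahm sums $\N(a,b)$.

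\textbf{Step 1: divided-difference families.} I would take from \cite{Xia} the $\N$-analogue of \eqref{XiaZ1}--\eqref{XiaZ2}. This is the family of dual evaluations behind \cite[Theorems 1.2--1.3]{Xia} used in \eqref{N45}, of the form
\[
Y_n:=\Ec\,(q;q)_n^{\pm1}\,\mathcal D_{3,c_n}\bigl[\omega q^{\alpha_n+j}+\omega^2q^{\beta_n-j}\bigr]_{0\le j\le n},
\]
which equals an explicit combination of $\N(a_n,b_n)$. The nodes are chosen so that the $n=1$ nodes are exactly the two conjugate points above, or the $n=0$ node is the real point $-q^{k}$ giving $P_4\Delta$ or $P_5\Delta$. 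If the exact normalisation is not available, I would re-derive it as in Lemma~\ref{lem:starrec}: a residue argument with $\Psi(t)=(q^3t,1/t;q^3)_\infty$ turns the $\N$ contiguous relations \eqref{cont1}--\eqref{cont2} into a three-term recurrence for the divided differences, and Lemma~\ref{lem:unique} together with a boundary estimate (the analogue of Lemma~\ref{lem:boundary}) pins down the identification.

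\textbf{Step 2: reduce to the basis.} Using \eqref{cont1}, \eqref{cont2}, \eqref{Nfrec} and the reductions already proved in Proposition~\ref{connections}, I would rewrite every $\N(a_n,b_n)$ appearing at $n=0,1,2$ in terms of three quantities: the two series $\N(1,-1)$ and $\N(0,2)$ (respectively $\N(1,-2)$ and $\N(-2,4)$), plus a known product such as $N_4=P_4\Delta$ or $N_5=P_5\Delta$. For instance, $\N(1,-1)=q\N(0,2)+(1+q^2)\N(1,2)+q^2\N(0,5)$ is already available.

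\textbf{Step 3: homogeneous system.} Expanding the $n=1$ and $n=2$ divided differences explicitly gives linear combinations of $\mathsf L_{i,\omega}$ and $\mathsf L_{i,\omega^2}$, with any real middle node contributing a known product. Define the errors
\[
e_1:=\mathsf L_{i,\omega}(1-\omega)-\mathcal N_{i,\omega},\qquad e_2:=\mathsf L_{i,\omega^2}(1-\omega^2)-\mathcal N_{i,\omega^2}.
\]
Comparing the two evaluations of $Y_1$ and $Y_2$ then yields a $2\times2$ homogeneous system in $(e_1,e_2)$ with coefficients in $\mathbb Q(\omega)[q]$. If its determinant has nonzero constant term (or is a nonzero element), then $e_1=e_2=0$ in $\mathbb Q(\omega)[[q]]$, which is \eqref{dual1}--\eqref{dual2}.

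\textbf{Main obstacle.} I expect the main obstacle to be Step 1: identifying (or proving) the correct $\N$-side divided-difference family with the right nodes, the right parameter $c_n$ and the right normalising $q$-power. The residue and boundary argument is where the work lies. The low-order algebra of Steps 2 and 3 is routine but error-prone, and the nonvanishing of the determinant must be checked explicitly.
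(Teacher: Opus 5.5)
Your plan is essentially the paper's proof. The paper takes from \cite[Theorems 8.1 and 9.4]{Xia} the family $V_n^{(\tau)}=\Ec(-1)^nq^{-3n(n-1)/2}(q;q)_n\,\mathcal D_{3,q^{6-2\tau-3n}}[\xi^{(\tau)}_{n,0},\ldots,\xi^{(\tau)}_{n,n}]$ with nodes $\xi^{(\tau)}_{n,j}=\omega q^{3-\tau-2n+j}+\omega^2q^{3-\tau-n-j}$, which equals $\N(n,1)$ for $\tau=1$ and $\N(n,2)+q^2\N(n-2,5)$ for $\tau=2$, and combines the cases $n=0,1,2$ with \eqref{cont1}, \eqref{cont2}, \eqref{Nfrec}. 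The only differences are bookkeeping: the actual $n=1$ nodes (e.g.\ $\omega q^{-1}+\omega^2$ with $c=q^{-1}$) give your products only up to finite factors such as $(1-\omega q^{-1})(1-\omega^2)$, and the paper solves directly for a basis of Nahm sums in terms of the three node products rather than forming your homogeneous error system, which is an equivalent elimination.
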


\begin{proof}
For $\tau=1,2$, $n\ge0$, and $0\le j\le n$, define
\[
 \xi_{n,j}^{(\tau)}:=\omega q^{3-\tau-2n+j}
 +\omega^2q^{3-\tau-n-j}.
\]
Using these nodes, define
\begin{equation}\label{v-2}
 V_n^{(\tau)}:=\Ec(-1)^n q^{-3n(n-1)/2}(q;q)_n
 \mathcal D_{3,q^{6-2\tau-3n}}
 [\xi_{n,0}^{(\tau)},\ldots,\xi_{n,n}^{(\tau)}].
\end{equation}
The divided-difference identities proved in
\cite[Theorems 8.1 and 9.4]{Xia} give
\begin{align}
 V_n^{(1)}&=\N(n,1),\label{XiaV1}\\
 V_n^{(2)}&=\N(n,2)+q^2\N(n-2,5).\label{XiaV2}
\end{align}
We need only $n=0,1,2$. We evaluate the products at the nodes using
\[
\mathcal D_{3,\alpha\beta}(\alpha+\beta)
=(\alpha,\beta;q^3)_\infty,
\qquad
(a;q^3)_\infty=(1-a)(aq^3;q^3)_\infty.
\]
We use \eqref{eq:divided-difference} to evaluate these expressions.
The nodes used below are distinct for $0<|q|<1$, as is also clear
from the factorizations of their differences.

We first prove \eqref{dual2}.  For $\tau=1$ put
\[
 \mathsf Q_{1,0}:=\Ec(\omega q^2,\omega^2q^2;q^3)_\infty,\quad
 \mathsf Q_{1,1}:=\Ec(\omega q,\omega^2q^3;q^3)_\infty,\quad \mathsf Q_{1,2} :=\Ec(\omega^2 q,\omega q^3;q^3)_\infty.
\]
In view of \eqref{v-2} and  \eqref{XiaV1},
\[
\mathfrak N(n,1)
=
\mathcal E_3(-1)^n q^{-3n(n-1)/2}(q;q)_n
\mathcal D_{3,q^{4-3n}}
[\xi_{n,0}^{(1)},\ldots,\xi_{n,n}^{(1)}].
\]
For \(n=0\),
\begin{align} \label{dualbasis1-1}
\mathfrak N(0,1)
=\mathcal E_3
\mathcal D_{3,q^4}(\omega q^2+\omega^2q^2)
=\mathcal E_3(\omega q^2,\omega^2q^2;q^3)_\infty
=\mathsf Q_{1,0}.
\end{align}
For $n=1$,    
\[
\xi_{1,0}^{(1)}=\omega+\omega^2q,\qquad \xi_{1,1}^{(1)}=\omega q+\omega^2,
\qquad \xi_{1,0}^{(1)}-\xi_{1,1}^{(1)}=(1-q)(\omega-\omega^2).
\]
The product values are
\[
\Ec\mathcal D_{3,q}(\xi_{1,0}^{(1)})=(1-\omega)\mathsf Q_{1,2},
\qquad
\Ec\mathcal D_{3,q}(\xi_{1,1}^{(1)})=(1-\omega^2)\mathsf Q_{1,1}.
\]
Multiplying their first divided difference by $-\Ec(1-q)$ gives
\begin{align} \label{dualbasis1-2}
	\mathfrak N(1,1)
	&=-\frac{(1-\omega)\mathsf Q_{1,2}
		-(1-\omega^2)\mathsf Q_{1,1}}
	{\omega-\omega^2} \nonumber \\
	&=-\omega\mathsf Q_{1,1}
	-\omega^2\mathsf Q_{1,2}.
\end{align}
By \eqref{eq:kernel-unified}, 
\begin{align*}
\Ec\mathcal D_{3,q^{-2}}(\xi_{2,0}^{(1)})
 &=(1-\omega q^{-2})(1-\omega^2)\mathsf Q_{1,1},\\
\Ec\mathcal D_{3,q^{-2}}(\xi_{2,1}^{(1)})
 &=(1-\omega q^{-1})(1-\omega^2q^{-1})\mathsf Q_{1,0},\\
\Ec\mathcal D_{3,q^{-2}}(\xi_{2,2}^{(1)})
 &=(1-\omega)(1-\omega^2q^{-2})\mathsf Q_{1,2}.
\end{align*}
 Consequently, writing $x_j:=\xi_{2,j}^{(1)}$ for $0\le j\le2$,
\begin{align} \label{dualbasis1-3}
q\mathfrak N(2,1)
&=
\mathcal E_3q^{-2}(1-q)(1-q^2)
\sum_{j=0}^{2}
\frac{\mathcal D_{3,q^{-2}}(x_j)}
{\displaystyle\prod_{\substack{0\le i\le2,\\i\ne j}}
	(x_j-x_i)}\nonumber\\
&=(1+q)\mathsf Q_{1,0}
+(q\omega^2+\omega)\mathsf Q_{1,1}
+(q\omega+\omega^2)\mathsf Q_{1,2}.
\end{align}

To obtain \eqref{dual2}, use \eqref{cont1} at $(a,b)=(-1,1)$
and \eqref{Nfrec} at $(a,b)=(-1,1)$:
\begin{align*}
\N(1,-2)&=\N(-1,1)-\N(0,1),\\
q\N(-1,1)&=(1+2q)\N(0,1)-(1-q)\N(1,1)-q\N(2,1).
\end{align*}
Eliminating $\N(-1,1)$ gives
\[
\N(1,-2)=(1+q^{-1})\N(0,1)
 +(1-q^{-1})\N(1,1)-\N(2,1).
\]
Substituting \eqref{dualbasis1-1}--\eqref{dualbasis1-3}
 into the above identity, we arrive at 
\[
\N(1,-2)=\mathsf Q_{1,1}+\mathsf Q_{1,2}.
\]
Furthermore, \eqref{cont2} at $(a,b)=(1,-2)$ gives
$$q\N(-2,4)=\N(1,-2)-\N(1,1).$$
  Therefore, 
\begin{align*}
\N(1,-2)+\omega^2q\N(-2,4)
 &=(1+\omega^2)(\mathsf Q_{1,1}+\mathsf Q_{1,2})
   -\omega^2(-\omega\mathsf Q_{1,1}-\omega^2\mathsf Q_{1,2})\\
 &=(1-\omega)\mathsf Q_{1,1},
\end{align*}
which is \eqref{dual2}.

For $\tau=2$, define
\[
 \mathsf Q_{2,0}:=\Ec(\omega q,\omega^2q;q^3)_\infty,\quad
 \mathsf Q_{2,1}:=\Ec(\omega q^2,\omega^2q^3;q^3)_\infty,\quad \mathsf Q_{2,2}:=\Ec(\omega^2 q^2,\omega q^3;q^3)_\infty.
\]
First apply \eqref{Nup} with $(a,b)=(n-2,2)$ to obtain
\[
q^2\N(n-2,5)
=q^{-n}\{\N(n-1,2)-\N(n,2)\}-\N(n+1,2).
\]
Together with \eqref{XiaV2}, this gives
\[
V_n^{(2)}=(1-q^{-n})\N(n,2)+q^{-n}\N(n-1,2)-\N(n+1,2).
\]
The recurrence \eqref{Nfrec} at $(a,b)=(-1,2)$ and $(0,2)$ yields
\begin{align*}
\N(-1,2)&=(1+q+q^{-1})\N(0,2)
 +(1-q^{-1})\N(1,2)-\N(2,2),\\
\N(3,2)&=-q^{-1}\N(0,2)
 +(q^{-2}+q^{-1}+q^2)\N(1,2)+(1-q^{-2})\N(2,2).
\end{align*}
Substituting these expressions at $n=0,1,2$, respectively, gives
\begin{align*}
 V_0^{(2)}&=(1+q+q^{-1})\N(0,2)-q^{-1}\N(1,2)-\N(2,2),\\
 V_1^{(2)}&=q^{-1}\N(0,2)+(1-q^{-1})\N(1,2)-\N(2,2),\\
 V_2^{(2)}&=q^{-1}\N(0,2)-(q^2+q^{-1})\N(1,2).
\end{align*}
Solving the linear system gives
\begin{align}
\N(0,2)
	&=\frac{(q^2+q^{-1})
		\bigl(V_0^{(2)}-V_1^{(2)}\bigr)-V_2^{(2)}}
	{1+q^2+q^3},\label{h-41}\\[4pt]
	\N(1,2)
	&=\frac{q^{-1}\bigl(V_0^{(2)}-V_1^{(2)}\bigr)
		-(1+q)V_2^{(2)}}
	{1+q^2+q^3}, \label{h-42}\\[4pt]
	\N(2,2)
	&
	=\frac{(1+q^2)V_0^{(2)}
		-(1+q+q^2+q^3+q^4)V_1^{(2)}
		-q^2V_2^{(2)}}
	{q(1+q^2+q^3)}. \label{h-43}
\end{align}

We next evaluate their product sides. At $n=0$,
\begin{align}
V_0^{(2)}=\Ec\mathcal D_{3,q^2}(\omega q+\omega^2q)
=\mathsf Q_{2,0}. \label{h-44}
\end{align}
At $n=1$, 
\begin{align}
V_1^{(2)}
 &=-\frac{(q-\omega)(1-\omega^2)}{\omega-\omega^2}\mathsf Q_{2,1}
   +\frac{(q-\omega^2)(1-\omega)}{\omega-\omega^2}\mathsf Q_{2,2}\nonumber \\
 &=(q\omega-\omega^2)\mathsf Q_{2,1}
   +(q\omega^2-\omega)\mathsf Q_{2,2}.\label{h-45}
\end{align}
At $n=2$,  
\begin{align}\label{h-46}
V_2^{(2)}=q^{-1}\{(1+q^3)\mathsf Q_{2,0}
 -(q^3+\omega+1)\mathsf Q_{2,1}-(q^3-\omega)\mathsf Q_{2,2}\}.
\end{align}
Substituting 
 \eqref{h-44}--\eqref{h-46}
  into \eqref{h-41}--\eqref{h-43} yields 
\begin{align}
 \N(0,2)&=-\omega \mathsf Q_{2,1}-\omega^2\mathsf Q_{2,2},\notag\\
 \N(1,2)&=-\mathsf Q_{2,0}+\mathsf Q_{2,1}+\mathsf Q_{2,2},\label{dualbasis2}\\
 q\N(2,2)&=(1-q)\mathsf Q_{2,0}+\{\omega^2-\omega q(1+q)\}\mathsf Q_{2,1}
       +\{\omega-\omega^2q(1+q)\}\mathsf Q_{2,2}.\notag
\end{align}
Lastly, \eqref{cont1} at $(a,b)=(-1,2)$ gives
$$\N(1,-1)=\N(-1,2)-\N(0,2).$$
 Using the expression for
$\N(-1,2)$ above, we obtain
\[
 \N(1,-1)=(q+q^{-1})\N(0,2)+(1-q^{-1})\N(1,2)-\N(2,2).
\]
Substitution of \eqref{dualbasis2} gives
\[
\N(1,-1)=-\omega^2\mathsf Q_{2,1}-\omega\mathsf Q_{2,2}.
\]
Consequently,
\begin{align*}
\N(1,-1)-\omega\N(0,2)
 &=-\omega^2\mathsf Q_{2,1}-\omega\mathsf Q_{2,2}
   +\omega^2\mathsf Q_{2,1}+\omega^3\mathsf Q_{2,2}\\
 &=(1-\omega)\mathsf Q_{2,2},
\end{align*}
which proves \eqref{dual1}. Coefficientwise conjugation, fixing $q$
and interchanging $\omega$ and $\omega^2$, gives both conjugate identities.
\end{proof}

\begin{lemma}\label{lem:cyclotomic-series}
The two reflected sums have the representations
\begin{align}
\mathcal K_{1,\omega}
 &=\mathscr T_{\mathsf A}(q^{-1})
   -\omega\mathscr S_{\mathsf A}(q^2),\label{L1}\\
\mathcal K_{2,\omega}
 &=\mathscr T_{\mathsf A}(q^{-2})
   +\omega^2\bigl\{\mathscr S_{\mathsf A}(q)
   +q\mathscr S_{\mathsf A}(q^4)
   +q\mathscr T_{\mathsf A}(q)\bigr\}.\label{L2}
\end{align}
\end{lemma}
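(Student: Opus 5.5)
The plan is to expand each $\mathcal T_{a,b}^{(c)}$ by summing over $r$ first, and to recognize the inner sums as values of the Schur polynomials $\mathsf A_m$ from \eqref{2-16}. Write
\[
\mathcal T_{a,b}^{(c)}=\sum_{s\ge0}\frac{q^{3s^2+bs}}{(q^3;q^3)_s}\sum_{r\ge0}q^{r^2-3sr+ar}\qb{3s-r+c}{r}.
\]
The inner sum equals $\mathsf A_m$ exactly when $a=c=-(m-3s)$, since then $q^{r^2-3sr+ar}=q^{r^2-mr}$ with $m=3s+c$. All four sums in \eqref{h-52} have $a=c$, so this reduction applies throughout.

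First, $\mathcal T^{(0)}_{0,2}$ has $m=3s$. It gives
\[
\sum_{s}\frac{q^{3s^2+2s}\mathsf A_{3s}}{(q^3;q^3)_s}=\mathscr S_{\mathsf A}(q^2).
\]
Next, $\mathcal T^{(-1)}_{1,-1}$ and $\mathcal T^{(-1)}_{1,-2}$ have $m=3s-1$ and give
\[
\sum_s\frac{q^{3s^2-s}\mathsf A_{3s-1}}{(q^3;q^3)_s}=\mathscr T_{\mathsf A}(q^{-1})
\qquad\text{and}\qquad
\mathscr T_{\mathsf A}(q^{-2}),
\]
respectively. At $s=0$ we must check the boundary. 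The Gaussian coefficient $\qb{-1-r}{r}$ vanishes by the stated convention, and this matches $\mathsf A_{-1}=0$, so the $s=0$ term is correctly $0$ on both sides. Combining these gives \eqref{L1} and the first term of \eqref{L2}.

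For the remaining term $q\mathcal T^{(2)}_{-2,4}$, we have $m=3s+2$, so it equals
\[
q\sum_s\frac{q^{3s^2+4s}\mathsf A_{3s+2}}{(q^3;q^3)_s}.
\]
I would then invoke the first identity from the proof of Lemma~\ref{lem:schur-transform},
\[
\mathsf A_{3s+2}=(1+q^{-3s-1})\mathsf A_{3s}+q^{-3s}\mathsf A_{3s-1},
\]
which is obtained by applying \eqref{eq:hrec} twice and is valid for all $s\ge0$ with $\mathsf A_{-1}=0$. At $s=0$ it gives $\mathsf A_2=1+q^{-1}$, consistent with the direct sum. Distributing the three pieces gives, respectively,
\[
q\,\mathscr S_{\mathsf A}(q^4),\qquad
\sum_s\frac{q^{3s^2+s}\mathsf A_{3s}}{(q^3;q^3)_s}=\mathscr S_{\mathsf A}(q),\qquad
q\sum_s\frac{q^{3s^2+s}\mathsf A_{3s-1}}{(q^3;q^3)_s}=q\,\mathscr T_{\mathsf A}(q).
\]
Multiplying by $\omega^2$ and adding $\mathscr T_{\mathsf A}(q^{-2})$ gives \eqref{L2}.

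The only real care needed is bookkeeping of exponents. In particular, the negative powers $q^{-3s-1}$ must combine with $q^{3s^2+4s+1}$ into exactly the arguments $q$ and $q^4$. In addition, the $s=0$ boundary terms and the convention for Gaussian coefficients with negative top must be checked so that no initial term is lost, as was done in Lemma~\ref{lem:asymmetric-series}. Convergence of the rearrangements is coefficientwise, by the positivity of $\mathcal Q_-$ noted after \eqref{eq:F10}.
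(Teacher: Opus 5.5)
Your proof is correct and is essentially the paper's own argument: identify the inner $r$-sums with $\mathsf A_{3s-1}$, $\mathsf A_{3s}$ and $\mathsf A_{3s+2}$, check that the $s=0$ terms vanish on both sides via $\mathsf A_{-1}=0$, and use the split $\mathsf A_{3s+2}=(1+q^{-3s-1})\mathsf A_{3s}+q^{-3s}\mathsf A_{3s-1}$ to produce $\mathscr S_{\mathsf A}(q)$, $q\mathscr S_{\mathsf A}(q^4)$ and $q\mathscr T_{\mathsf A}(q)$. One harmless slip: with $m=3s+c$ the exponent matching requires $a=-c$, not $a=c$ (all four sums $\mathcal T_{1,-1}^{(-1)}$, $\mathcal T_{0,2}^{(0)}$, $\mathcal T_{1,-2}^{(-1)}$, $\mathcal T_{-2,4}^{(2)}$ have $a=-c$), but your case-by-case identifications of $m$ are all correct, so nothing downstream is affected.
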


\begin{proof}
By \eqref{Tdef}, we obtain, for $b=-1,-2$,
	\begin{align}
		\mathcal T_{1,b}^{(-1)}
		&=\sum_{s\ge1}\frac{q^{3s^2+bs}}{(q^3;q^3)_s}
		\sum_{r\ge0}q^{r^2-(3s-1)r}
		\genfrac{[}{]}{0pt}{}{3s-r-1}{r}_q \nonumber \\
		&=\sum_{s\ge1}
		\frac{q^{3s^2+bs}\mathsf A_{3s-1}}{(q^3;q^3)_s}
		=\mathscr T_{\mathsf A}(q^b). \label{h-53}
	\end{align}
	Here the $s=0$ term in $\mathcal T_{1,b}^{(-1)}$
	vanishes by the convention for Gaussian coefficients,
	and the corresponding term in $\mathscr T_{\mathsf A}$
	vanishes because $\mathsf A_{-1}=0$.
	Similarly,
	\begin{align}
		\mathcal T_{0,2}^{(0)}
		&=\sum_{s\ge0}\frac{q^{3s^2+2s}}{(q^3;q^3)_s}
		\sum_{r\ge0}q^{r^2-3sr}
		\genfrac{[}{]}{0pt}{}{3s-r}{r}_q \nonumber \\
		&=\sum_{s\ge0}
		\frac{q^{3s^2+2s}\mathsf A_{3s}}{(q^3;q^3)_s}
		=\mathscr S_{\mathsf A}(q^2). \label{h-54}
	\end{align}
	Identity \eqref{L1} follows from \eqref{h-52}, \eqref{h-53}
	 and \eqref{h-54}.

By \eqref{Tdef},
	\[
	\begin{aligned}
		q\mathcal T_{-2,4}^{(2)}
		&=q\sum_{s\ge0}\frac{q^{3s^2+4s}}{(q^3;q^3)_s}
		\sum_{r\ge0}q^{r^2-(3s+2)r}
		\genfrac{[}{]}{0pt}{}{3s-r+2}{r}_q\\
		&=q\sum_{s\ge0}
		\frac{q^{3s^2+4s}\mathsf A_{3s+2}}{(q^3;q^3)_s}.
	\end{aligned}
	\]
	Applying the recurrence of $\mathsf A_m$ 
	at $m=3s+2$ and $m=3s+1$, respectively, yields
	\[
	\begin{aligned}
		\mathsf A_{3s+2}
		&=\mathsf A_{3s+1}+q^{-3s-1}\mathsf A_{3s}\\
		&=(1+q^{-3s-1})\mathsf A_{3s}
		+q^{-3s}\mathsf A_{3s-1}.
	\end{aligned}
	\]
	This also holds at $s=0$, using
	$\mathsf A_{-1}=0$ and $\mathsf A_0=\mathsf A_1=1$.
	Therefore,
	\begin{align}
		q\mathcal T_{-2,4}^{(2)}
		&=\sum_{s\ge0}
		\frac{q^{3s^2+4s+1}\mathsf A_{3s}}{(q^3;q^3)_s}
		+\sum_{s\ge0}
		\frac{q^{3s^2+s}\mathsf A_{3s}}{(q^3;q^3)_s}+\sum_{s\ge0}
		\frac{q^{3s^2+s+1}\mathsf A_{3s-1}}{(q^3;q^3)_s} \nonumber \\
		&=q\mathscr S_{\mathsf A}(q^4)
		+\mathscr S_{\mathsf A}(q)
		+q\mathscr T_{\mathsf A}(q).\label{h-55}
	\end{align}
	Setting $b=-2$ in \eqref{h-53} yields 
	\begin{align}
		\mathcal T_{1,-2}^{(-1)}
		=\mathscr T_{\mathsf A}(q^{-2}). \label{h-56}
	\end{align} 
	Identity \eqref{L2} follows from \eqref{h-52}, \eqref{h-55}
and \eqref{h-56}.  
\end{proof}

Define
\begin{equation}\label{eq:cyclo-airy-series}
\begin{aligned}
\mathscr U_{1,\omega}
 &:=\mathscr T_u(q^{-1})-\omega\mathscr S_u(q^2),\\
\mathscr V_{1,\omega}
 &:=\mathscr T_v(q^{-1})-\omega\mathscr S_v(q^2),\\
\mathscr U_{2,\omega}
 &:=\mathscr T_u(q^{-2})
  +\omega^2\{\mathscr S_u(q)+q\mathscr S_u(q^4)
                         +q\mathscr T_u(q)\},\\
\mathscr V_{2,\omega}
 &:=\mathscr T_v(q^{-2})
  +\omega^2\{\mathscr S_v(q)+q\mathscr S_v(q^4)
                         +q\mathscr T_v(q)\}.
\end{aligned}
\end{equation}
Termwise substitution of \eqref{ainv} into \eqref{L1}--\eqref{L2}
gives
\begin{equation}\label{eq:cyclo-A-combination}
\mathcal K_{i,\omega}
=\frac{\mathsf C_-(\rho)\mathscr U_{i,\omega}
       +\mathsf C_+(\rho)\mathscr V_{i,\omega}}2,
\qquad i=1,2.
\end{equation}

\begin{proposition}\label{twistedconnection}
The series in \eqref{eq:cyclo-airy-series} satisfy
\begin{equation}\label{eq:cyclo-system}
\begin{pmatrix}\tau_1&\tau_2\\\tau_3&\tau_4\end{pmatrix}
\binom{\mathscr U_{i,\omega}}{\mathscr V_{i,\omega}}
=2\mathcal N_{i,\omega}\binom{J_1}{\omega^2J_3},
\qquad i=1,2,
\end{equation}
where $\mathcal N_{1,\omega},\mathcal N_{2,\omega}$ are defined in
\eqref{dual1}--\eqref{dual2}.
\end{proposition}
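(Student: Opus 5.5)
The plan is to compute the two rows of \eqref{eq:cyclo-system} separately by applying Lemma~\ref{lem:directions} term by term to the definitions \eqref{eq:cyclo-airy-series}. The resulting combinations of $\N$-values are then reduced to $\mathcal N_{i,\omega}$ using only the contiguous relations \eqref{cont1}--\eqref{cont2}. The only other input is the relation between the theta values $\mathsf c_k,\mathsf d_k$ for different $k$. By the shift formula $\mathsf c_{k+3}=q^{-2k}\mathsf c_k$, $\mathsf d_{k+3}=q^{-2k}\mathsf d_k$ from the proof of \eqref{shiftS}, together with $(\mathsf c_2,\mathsf d_2)=(\tau_3,\tau_4)$ and $(\mathsf c_1,\mathsf d_1)=(\thp_{\rho^6}(\rho),\thp_{\rho^6}(-\rho))=(\tau_3,\tau_4)$, we get
\[
(\mathsf c_{-1},\mathsf d_{-1})=q^{-2}(\tau_3,\tau_4),\qquad (\mathsf c_{-2},\mathsf d_{-2})=q^{-4}(\tau_3,\tau_4).
\]

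\emph{First rows.} By \eqref{firstS} and \eqref{firstT}, the first row for $i=1$ equals $2J_1\{\N(1,-1)-\omega\N(0,2)\}=2J_1\mathcal N_{1,\omega}$ immediately. For $i=2$ the first row equals $2J_1\{\N(1,-2)+\omega^2(\N(0,1)+q\N(0,4)+q\N(1,1))\}$. Applying \eqref{cont1} at $(a,b)=(-2,4)$ and at $(-1,4)$ gives
\[
q\N(-2,4)=\N(0,1)+q\N(-1,4),\qquad q\N(-1,4)=q\N(0,4)+q\N(1,1),
\]
so the bracket equals $q\N(-2,4)$, and the row is $2J_1\mathcal N_{2,\omega}$.

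\emph{Second rows, $i=1$.} Apply \eqref{shiftT} at $k=-1$ and rescale by $q^{2}$ to get $\tau_3\mathscr T_u(q^{-1})+\tau_4\mathscr T_v(q^{-1})=-2J_3\N(-1,2)$. Also \eqref{secondS} at $k=2$ gives $2J_3\N(1,-1)$ for the $\mathscr S(q^2)$ part. The row is therefore $-2J_3\{\N(-1,2)+\omega\N(1,-1)\}$. Since $\omega+\omega^2=-1$, equating this with $2\omega^2J_3\mathcal N_{1,\omega}$ amounts to $\N(-1,2)=\N(1,-1)+\N(0,2)$, which is \eqref{cont1} at $(-1,2)$.

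\emph{Second rows, $i=2$.} Use \eqref{shiftT} at $k=-2$, rescaled by $q^4$, for $\mathscr T(q^{-2})$; this gives $-2q^{-3}\cdot q^{4}\cdots$, that is, $-2qJ_3\N(-2,4)$ after collecting powers. Then use \eqref{secondS}, \eqref{shiftS} and \eqref{shiftT} at $k=1$ (where $(\mathsf c_1,\mathsf d_1)=(\tau_3,\tau_4)$) for $\mathscr S(q)$, $\mathscr S(q^4)$ and $\mathscr T(q)$. These produce $\N(0,1)$, $q^{2}\N(3,-5)$ and $-\N(1,-2)$ respectively. The target $2\omega^2J_3\mathcal N_{2,\omega}=2J_3\{\omega^2\N(1,-2)+\omega q\N(-2,4)\}$ then requires matching the $\omega^0$ and $\omega^2$ coefficients separately, after rewriting $1=-\omega-\omega^2$. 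This yields two linear identities among $\N(1,-2)$, $\N(-2,4)$, $\N(0,1)$, $\N(3,-5)$ and $\N(1,1)$. I would verify these by expressing everything, via \eqref{cont1}--\eqref{cont2} and \eqref{Nup}, in terms of a fixed basis such as $\N(0,1),\N(1,1),\N(2,1)$, exactly as in the proof of Lemma~\ref{duallemma}. That bookkeeping, in particular tracking the power of $q$ in the rescaled \eqref{shiftT} term and eliminating $\N(3,-5)$, is where I expect the main difficulty; the other three rows are one-line contiguous relations.
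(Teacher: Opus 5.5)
Your proposal follows the paper's proof step for step, and everything you actually carry out is correct: the theta rescalings $(\mathsf c_{-1},\mathsf d_{-1})=q^{-2}(\tau_3,\tau_4)$ and $(\mathsf c_{-2},\mathsf d_{-2})=q^{-4}(\tau_3,\tau_4)$, both first rows, and the second row for $i=1$. Your setup of the second row for $i=2$ is also right. With the powers of $q$ you tracked, that row divided by $2J_3$ is
\[
-q\N(-2,4)+\omega^2\bigl\{\N(0,1)+q^3\N(3,-5)-q\N(1,-2)\bigr\}.
\]
The one thing missing is the final verification, which you deferred to a reduction onto the basis $\N(0,1),\N(1,1),\N(2,1)$. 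That reduction would work, but as written the $i=2$ second row is not yet proved.

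The remaining step is shorter than you expect, and only one identity is nontrivial, not two. Write the target as
\[
\omega^2\N(1,-2)+\omega q\N(-2,4)=\omega^2\bigl\{\N(1,-2)-q\N(-2,4)\bigr\}-q\N(-2,4).
\]
The $\omega^0$ parts then agree identically, and you need only
\[
\N(0,1)+q^3\N(3,-5)-q\N(1,-2)=\N(1,-2)-q\N(-2,4).
\]
This needs three contiguous relations and no basis reduction. First, \eqref{cont1} at $(1,-2)$ gives $q^2\N(3,-5)=\N(1,-2)-\N(2,-2)$. Next, \eqref{cont1} at $(0,1)$ gives $q\N(2,-2)=\N(0,1)-\N(1,1)$, so the left side collapses to $\N(1,1)$. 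Finally, \eqref{cont2} at $(1,-2)$ gives $\N(1,-2)-\N(1,1)=q\N(-2,4)$, which is the right side. This is exactly how the paper closes the argument, so with this step inserted your proof is complete and coincides with the paper's.
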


\begin{proof} We prove the two rows of the asserted matrix identity separately.
	
	For the first row with $i=1$, the definitions of
	$\mathscr U_{1,\omega}$ and $\mathscr V_{1,\omega}$,
	together with \eqref{firstS} and \eqref{firstT}, give
	\begin{align}
		 \tau_1\mathscr U_{1,\omega}
		+\tau_2\mathscr V_{1,\omega}&=
		\tau_1\mathscr T_u(q^{-1})
		+\tau_2\mathscr T_v(q^{-1})
		-\omega\bigl\{
		\tau_1\mathscr S_u(q^2)
		+\tau_2\mathscr S_v(q^2)
		\bigr\} \nonumber \\
		&\quad=2J_1\bigl\{
		\N(1,-1)-\omega\N(0,2)
		\bigr\} \nonumber \\
		&\quad=2J_1\mathcal N_{1,\omega}. \label{h-61}
	\end{align}
	For $i=2$, the same two identities yield
	\begin{align*}
		\frac{\tau_1\mathscr U_{2,\omega}
			+\tau_2\mathscr V_{2,\omega}}{2J_1}
		&=\N(1,-2) +\omega^2\bigl\{
		\N(0,1)+q\N(0,4)+q\N(1,1)
		\bigr\}. 
	\end{align*}
	Applying \eqref{cont1} at $(a,b)=(-2,4)$ and $(-1,4)$,
	respectively, gives
	\[
	\begin{aligned}
		\N(-2,4)-\N(-1,4)&=q^{-1}\N(0,1),\quad 
		\N(-1,4)-\N(0,4) =\N(1,1).
	\end{aligned}
	\]
	Multiplying their sum by $q$, we obtain
	\[
	q\N(-2,4)
	=\N(0,1)+q\N(0,4)+q\N(1,1).
	\]
	Consequently,
	\begin{align}\label{h-62}
		\frac{\tau_1\mathscr U_{2,\omega}
			+\tau_2\mathscr V_{2,\omega}}{2J_1}
		&=\N(1,-2)+\omega^2q\N(-2,4)=\mathcal N_{2,\omega}.
	\end{align}
Identities \eqref{h-61}
 and \eqref{h-62} prove  the first row for both values of $i$.
	
	For the second row, it follows  from \eqref{h-72}
	 and \eqref{h-71} that
	\[
	(\mathsf c_1,\mathsf d_1)
	=(\mathsf c_2,\mathsf d_2)
	=(\tau_3,\tau_4).
	\]
	The theta shift relations
	\[
	\mathsf c_{k+3}=q^{-2k}\mathsf c_k,
	\qquad
	\mathsf d_{k+3}=q^{-2k}\mathsf d_k
	\]
	therefore imply
	\[
	\begin{aligned}
		(\mathsf c_{-1},\mathsf d_{-1})
		&=q^{-2}(\tau_3,\tau_4),\qquad 
		(\mathsf c_{-2},\mathsf d_{-2})
		 =q^{-4}(\tau_3,\tau_4).
	\end{aligned}
	\]
	Using \eqref{shiftT} at $k=-1$ and $k=-2$, respectively,
	we find
	\[
	\begin{aligned}
		\tau_3\mathscr T_u(q^{-1})
		+\tau_4\mathscr T_v(q^{-1})
		&=q^2\bigl\{
		\mathsf c_{-1}\mathscr T_u(q^{-1})
		+\mathsf d_{-1}\mathscr T_v(q^{-1})
		\bigr\}\\
		&=-2J_3\N(-1,2),\\[3pt]
		\tau_3\mathscr T_u(q^{-2})
		+\tau_4\mathscr T_v(q^{-2})
		&=q^4\bigl\{
		\mathsf c_{-2}\mathscr T_u(q^{-2})
		+\mathsf d_{-2}\mathscr T_v(q^{-2})
		\bigr\}\\
		&=-2qJ_3\N(-2,4).
	\end{aligned}
	\]
	
	For $i=1$, \eqref{secondS} at $k=2$ also gives
	\[
	\tau_3\mathscr S_u(q^2)
	+\tau_4\mathscr S_v(q^2)
	=2J_3\N(1,-1).
	\]
	Hence
	\[
	\frac{\tau_3\mathscr U_{1,\omega}
		+\tau_4\mathscr V_{1,\omega}}{2J_3}
	=-\N(-1,2)-\omega\N(1,-1).
	\]
	By \eqref{cont1} at $(a,b)=(-1,2)$,
	\[
	\N(-1,2)=\N(0,2)+\N(1,-1).
	\]
	Using $1+\omega+\omega^2=0$, $\omega^3=1$ and the above identity, we obtain
	\[
	\begin{aligned}
		-\N(-1,2)-\omega\N(1,-1)
		&=-\N(0,2)-(1+\omega)\N(1,-1)=\omega^2\mathcal N_{1,\omega}.
	\end{aligned}
	\]
	Thus
	\[
	\tau_3\mathscr U_{1,\omega}
	+\tau_4\mathscr V_{1,\omega}
	=2\omega^2J_3\mathcal N_{1,\omega}.
	\]
	
	For $i=2$, applying \eqref{secondS}, \eqref{shiftS},
	and \eqref{shiftT} at $k=1$ gives, respectively,
	\[
	\begin{aligned}
		\tau_3\mathscr S_u(q)+\tau_4\mathscr S_v(q)
		&=2J_3\N(0,1),\\
		\tau_3\mathscr S_u(q^4)+\tau_4\mathscr S_v(q^4)
		&=2q^2J_3\N(3,-5),\\
		\tau_3\mathscr T_u(q)+\tau_4\mathscr T_v(q)
		&=-2J_3\N(1,-2).
	\end{aligned}
	\]
	Combining these evaluations with the formula for
	$\tau_3\mathscr T_u(q^{-2})
	+\tau_4\mathscr T_v(q^{-2})$ yields
	\[
	\begin{aligned}
		\frac{\tau_3\mathscr U_{2,\omega}
			+\tau_4\mathscr V_{2,\omega}}{2J_3}
		&=-q\N(-2,4)+\omega^2\bigl\{
		\N(0,1)+q^3\N(3,-5)-q\N(1,-2)
		\bigr\}.
	\end{aligned}
	\]
	To simplify the expression in braces, apply \eqref{cont1}
	at $(a,b)=(0,1)$ and $(1,-2)$:
	\[
	\begin{aligned}
		\N(0,1)-\N(1,1)&=q\N(2,-2),\\
		\N(1,-2)-\N(2,-2)&=q^2\N(3,-5).
	\end{aligned}
	\]
	It follows that
	\[
	\begin{aligned}
		&\N(0,1)+q^3\N(3,-5)-q\N(1,-2)
	=\N(0,1)-q\N(2,-2)
		=\N(1,1).
	\end{aligned}
	\]
	Moreover, \eqref{cont2} at $(a,b)=(1,-2)$ gives
	\[
	\N(1,-2)-\N(1,1)=q\N(-2,4).
	\]
	Therefore,
	\[
	\begin{aligned}
		\frac{\tau_3\mathscr U_{2,\omega}
			+\tau_4\mathscr V_{2,\omega}}{2J_3}
		&=-q\N(-2,4)+\omega^2\N(1,1)\\
		&=-q\N(-2,4)
		+\omega^2\bigl\{\N(1,-2)-q\N(-2,4)\bigr\}\\
		&=\omega^2\mathcal N_{2,\omega}.
	\end{aligned}
	\]
	This proves the second row for $i=2$, and hence completes
	the proof of the matrix identity.
\end{proof}

\begin{proof}[Proof of Theorem~\ref{Kthm}]
Apply Proposition~\ref{prop:multiplier} to \eqref{eq:cyclo-system}
with
\[
 (\mathscr U,\mathscr V)
 =(\mathscr U_{i,\omega},\mathscr V_{i,\omega}),\qquad
 \nu=\mathcal N_{i,\omega},\qquad \lambda=\omega^2.
\]
Together with \eqref{eq:cyclo-A-combination}, this gives
\begin{equation}\label{universal}
\begin{aligned}
\mathcal K_{i,\omega}
 &=\mathcal N_{i,\omega}\mathcal M(\omega^2)\\
 &=\mathcal N_{i,\omega}\frac{J_1}{J_3^2}
 \left(J_9G(q^9)+\frac{\omega^2-1}{3}J_1H(q)\right),
 \qquad i=1,2.
\end{aligned}
\end{equation}
Since $\mathcal E_3=J_3/J_1$ and
\[
 (1-\omega)(\omega^2-1)
 =\omega^2+\omega-2=-3,
\]
substitution of \eqref{dual1}--\eqref{dual2} into
\eqref{universal} yields
\begin{align}
\mathcal K_{1,\omega}
 &=\frac{(\omega^2q^2,\omega q^3;q^3)_\infty}{J_3}
 \left((1-\omega)J_9G(q^9)-J_1H(q)\right),\label{Kfactor1}\\
\mathcal K_{2,\omega}
 &=\frac{(\omega q,\omega^2q^3;q^3)_\infty}{J_3}
 \left((1-\omega)J_9G(q^9)-J_1H(q)\right).\label{Kfactor2}
\end{align}

We next evaluate their common factor. Write
\[
 j(z;p):=(z,p/z,p;p)_\infty
 =\sum_{n\in\mathbb Z}(-1)^np^{n(n-1)/2}z^n.
\]
The product formulas for $G$ and $H$ give
\[
\begin{aligned}
 J_9G(q^9)
 &=\frac{(q^9;q^9)_\infty}{(q^9,q^{36};q^{45})_\infty}
   =(q^{18},q^{27},q^{45};q^{45})_\infty
   =j(q^{18};q^{45}),\\
 J_1H(q)
 &=\frac{(q;q)_\infty}{(q^2,q^3;q^5)_\infty}
   =(q,q^4,q^5;q^5)_\infty
   =j(q;q^5).
\end{aligned}
\]
In the bilateral expansion of $j(q;q^5)$, the terms with
$n=3m$ are
\[
 \sum_{m\in\mathbb Z}(-1)^{3m}q^{(45m^2-9m)/2}
 =\sum_{m\in\mathbb Z}(-1)^m
   (q^{45})^{m(m-1)/2}q^{18m}
 =j(q^{18};q^{45}).
\]
Consequently,
\begin{align*}
 (1-\omega)J_9G(q^9)-J_1H(q)
 &=\sum_{n\in\mathbb Z}(-1)^nq^{(5n^2-3n)/2}
   \bigl((1-\omega)\mathbf s(n) -1\bigr)\nonumber\\
   &= - \sum_{n\in\mathbb Z}(-1)^nq^{(5n^2-3n)/2}
  \omega^{2n^2+1}
\end{align*}
Here $\mathbf s(n)$ is defined by   
\[
\mathbf s(n):=
\begin{cases}
	1,& {\rm if}\ 3\mid n,\\
	0,& {\rm if}\  3\nmid n,
\end{cases}
\]
and  for every integer $n$,
\[
 (1-\omega)\mathbf s(n) -1
 =- \omega^{2n^2+1}.
\]
Also,
\[
 2n^2-\left(\frac{n(n-1)}2+2n\right)
 =\frac{3n(n-1)}2\in3\mathbb Z.
\]
It follows that
\begin{equation}\label{Bjacobi}
\begin{aligned}
 (1-\omega)J_9G(q^9)-J_1H(q)
 &=-\omega\sum_{n\in\mathbb Z}(-1)^n
   (\omega q^5)^{n(n-1)/2}(\omega^2q)^n\\
 &=-\omega\,j(\omega^2q;\omega q^5).
\end{aligned}
\end{equation}

Note that 
\begin{align}\label{th-81}
 j(\omega^2q;\omega q^5)
 &=(q^6,q^9,q^{15};q^{15})_\infty (\omega^2q,\omega^2q^4,\omega^2q^{10},
              \omega q^5,\omega q^{11},\omega q^{14};q^{15})_\infty.
\end{align}
Furthermore,
\begin{equation}\label{normalcancel}
\begin{aligned}
 \frac{(q^6,q^9,q^{15};q^{15})_\infty}{J_3}
 & =\frac1{(q^3,q^{12},q^{18},q^{27},q^{33},q^{42};q^{45})_\infty}.
\end{aligned}
\end{equation}
Substituting this and \eqref{Bjacobi}--\eqref{normalcancel}
into \eqref{Kfactor1}, and cancelling the factors with exponents
$33$ and $42$ at base $q^{45}$, yields
\[
\mathcal K_{1,\omega}
 =-\omega\,
 \frac{(q^{15};q^{45})_\infty(\omega q^3;q^3)_\infty
 (\omega^2q,\omega^2q^2,\omega^2q^4,\omega^2q^8,\omega^2q^{10};q^{15})_\infty}
 {(q^5,q^{11},q^{14};q^{15})_\infty
  (q^3,q^{12},q^{18},q^{27};q^{45})_\infty}.
\]
This proves \eqref{K1} for $\zeta=\omega$.

Substitution \eqref{Bjacobi}--\eqref{normalcancel} into \eqref{Kfactor2} and cancellation of the factors
with exponents $3$ and $12$ at base $q^{45}$ give
\[
\mathcal K_{2,\omega}
 =-\omega\,
 \frac{(q^{30};q^{45})_\infty(\omega^2q^3;q^3)_\infty
 (\omega q^5,\omega q^7,\omega q^{11},\omega q^{13},\omega q^{14};q^{15})_\infty}
 {(q,q^4,q^{10};q^{15})_\infty
  (q^{18},q^{27},q^{33},q^{42};q^{45})_\infty}.
\]

This proves \eqref{K2} for \(\zeta=\omega\).  Applying the
coefficient-field conjugation
\(\omega\mapsto\omega^2\), with \(q\) fixed, proves the other two
identities for \(0<q<1\).

We finally extend the identities to complex \(q\).  For each fixed
\(\zeta\in\{\omega,\omega^2\}\), both sides of \eqref{K1} and
\eqref{K2} are holomorphic functions of \(q\) on \(|q|<1\).  The
defining series converge locally uniformly in this disk, while the
infinite products converge locally uniformly and their denominator
factors do not vanish there.  Although the auxiliary variable
\(\rho=q^{1/4}\) was used in the Airy-function calculation, the final
expressions contain only integral powers of \(q\) and hence are
independent of the choice of a fourth root.  Since the identities hold
on the real interval \(0<q<1\), the identity theorem shows that they
hold for every \(|q|<1\).
\end{proof}

\section{Concluding remarks}\label{sec:conclusion}

We have proved all eighteen conjectural identities using a unified
method that combines parameterized Nahm-sum evaluations, Schur-type
finite sums, divided differences, \(q\)-Airy functions, and
theta-function identities. For the six symmetric reflections, a Schur
decomposition gives one relation between each pair of sums, while
finite rational identities and residue calculations give a second
relation. The corresponding dual Nahm-sum evaluations
from~\cite{Xia} then determine the reflected sums through nonsingular
two-by-two linear systems.

For the asymmetric family, we express the Schur sequences in a
\(q\)-Airy basis and use Airy--theta connection formulas to derive a
common multiplier identity. The same basis and multiplier apply to the
cyclotomic family after specialization at a primitive cube root of
unity. The Hickerson and cyclotomic Nahm-sum evaluations follow from
low-order divided differences of   the parameterized families established in \cite{Xia}. The
remaining product simplifications are obtained from theta-function
dissections and, where necessary, generalized eta-product identities
verified with the Frye--Garvan package~\cite{FG}. Thus, the apparently
different identities are governed by the same underlying recurrences
and Nahm-sum structure.
 
 Several questions remain open. First, it would be useful to construct
 finite polynomial refinements of the eighteen identities and to
 determine whether their reflected sums admit natural partition
 interpretations or bijective proofs. Second, alternative derivations
 using Bailey pairs, constant-term identities, or direct
 recurrence-and-summation methods may reveal transformations valid for
 larger parameter families. In particular, the common difference
 equations \eqref{eq:ST1} and \eqref{eq:ST2} suggest a possible extension
 of the methods in~\cite{Wang12}. Finally, one may investigate
 $q$-reflections of identities of other moduli and of higher-rank Nahm
 sums, with the aim of distinguishing those whose reflected limits are
 modular products from those leading to partial theta, false theta, or
 mock modular functions.

\section*{Statements and Declarations}

\noindent{\bf Acknowledgments.}
This work was supported by the National Natural Science Foundation
of China (Grant No.~12371334) and the Qinglan Project of Jiangsu Province. 

\medskip
\noindent{\bf Competing Interests.}
The author declares no competing interests.

\medskip
\noindent\textbf{Data Availability}. 
Data sharing is not applicable to this article as no datasets were generated
or analyzed during the current study.

\medskip
\noindent\textbf{Declaration of AI usage.}
During the development of this work, the author used ChatGPT
(OpenAI) as a research-assistance tool to explore possible proof
strategies and to assist with language editing and proofreading.
All mathematical arguments presented in the paper were independently
verified, refined, and written by the author, who takes full
responsibility for the final content of the publication.

\end{document}